\newtheorem{teor}{Theorem}[section]
\newtheorem*{defi*}{Definition}
\newtheorem*{teorA*}{Theorem A}
\newtheorem*{teorB*}{Theorem B}
\newtheorem*{teorC*}{Theorem C}
\newtheorem{prop-defi}{Proposition-Definition} 
\newtheorem{lema}[teor]{Lemma}
\newtheorem{prop}[teor]{Proposition}
\newtheorem{cor}[teor]{Corollary}
\newtheorem{rem}[teor]{Remark}
\newtheorem{rems}[teor]{Remarks}
\newtheorem{ejem}[teor]{Example}
\newtheorem{ejems}[teor]{Examples}
\newtheorem{setting}[teor]{Setting}
\newtheorem{Obs}[teor]{Observation}
\newtheorem{Com}[teor]{Comment}
\def\Hom{\mathop{\rm Hom}\nolimits}
\def\mod {\mathop{\rm mod}\nolimits}
\def\End {\mathop{\rm End}\nolimits}
\def\Ext {\mathop{\rm Ext}\nolimits}
\def\Soc {\mathop{\rm Soc}\nolimits}
\def\Ker {\mathop{\rm Ker}\nolimits}
\def\Coker {\mathop{\rm Coker}\nolimits}
\def\Im {\mathop{\rm Im}\nolimits}
\def\ann {\mathop{\rm ann}\nolimits}
\def\add {\mathop{\rm add}\nolimits}
\def\pdim {\mathop{\rm pdim}\nolimits} 
\def\gldim {\mathop{\rm gldim}\nolimits} 
\def\top {\mathop{\rm top}\nolimits}
\def\id {\mathop{\rm id}\nolimits}
\def\soc {\mathop{\rm soc}\nolimits} 
\def\op {\mathop{\rm op}\nolimits}
\def\lfindim {\mathop{\rm l.findim }\nolimits}
\def\rfindim {\mathop{\rm r.findim }\nolimits}
\def\la{\Lambda}
\def\pinf{\mathcal{P}^{<\infty}}
\def\XX {{\mathbb X}}
\def\latilde{\widetilde{\Lambda}}
\def\etilde {\widetilde{e}}
\def\Jtilde {\widetilde{J}}
\newcommand\lamod{\Lambda\mbox{\rm-mod}}
\newcommand\laMod{\Lambda\mbox{\rm-Mod}}
\newcommand\elaemod{e\Lambda e\mbox{\rm-mod}}
\newcommand\elaeMod{e\Lambda e\mbox{\rm-Mod}}
\newcommand\laprimemod{\Lambda'\mbox{\rm-mod}}
\newcommand\latiltilmod{\widetilde{\latilde}\mbox{\rm-mod}}
\newcommand\Amod{A\mbox{\rm-mod}}
\newcommand\modla{\mbox{\rm mod-}\Lambda}
\newcommand\modelae{\mbox{\rm mod-}e\Lambda e}
\newcommand\modlatilde{\mbox{\rm mod-}\latilde}
\newcommand\modeprimelaeprime{\mbox{\rm mod-}e'\Lambda e'}
\newcommand\modetillatiletil{\mbox{\rm mod-}\etilde\latilde\etilde}
\newcommand\edge{\ar@{-}}
\newcommand\drbl{\save+<0ex,-2ex> \drop{\bullet} \restore}
\title[contravariant finiteness and iterated strong tilting]{Contravariant finiteness and iterated strong tilting}
\author[B.~Huisgen-Zimmermann]{Birge Huisgen-Zimmermann}
\address[Birge Huisgen-Zimmermann]{%
Department of Mathematics, University of California, Santa Barbara\\
CA 93106 \\
USA}
\email{birge@math.ucsb.edu}
\author[Z.~Nazemian]{Zahra Nazemian}
\address[Zahra Nazemian]{%
University of Graz \\
Heinrichstrasse 36 \\
8010 Graz, Austria}
\email{zahra.nazemian@uni-graz.at}
\author[M.~Saor\'\i n]{Manuel Saor\'\i n}
\address[Manuel Saor\'\i n]{%
Departamento de Matem\'aticas\\
Universidad de Murcia, Aptdo. 4021\\
30100 Espinardo, Murcia\\
SPAIN}
\email{msaorinc@um.es}%
\thanks{ The second named author would like to thank all members of Department of Mathematics at the University of Murcia for their warm hospitality during her stay as a post doc, supported by a grant of  the research project 19880/GERM/15 of the Fundaci\'on S\'eneca of Murcia.  The third named author has been supported by the  Grant PID2020-113206GB-I00 funded by MCIN/AEI/10.13039/501100011033
}
\begin{document}

\begin{abstract}
{ 
Let $\pinf (\lamod)$
 be the category of finitely generated left modules of finite projective dimension over a basic Artin algebra
$\Lambda$.  We 
develop a widely applicable criterion that reduces the test for contravariant finiteness of
 $\pinf (\lamod)$ in $\lamod$ to 
corner algebras
 $e \Lambda e$ for suitable idempotents $e \in \Lambda$.  
The reduction substantially facilitates access to the 
numerous homological benefits entailed by contravariant finiteness of 
$\pinf (\lamod)$.   
The consequences pursued 
here hinge on the fact that this finiteness condition is known to be equivalent to the existence of a strong tilting 
object in $\lamod$.  
We moreover characterize the situation in which the process of strongly tilting 
$\lamod$ allows for 
unlimited iteration:  This occurs precisely when, in the category  
 $\modlatilde$ of right modules over the strongly tilted algebra $\latilde$, the subcategory 
of modules of finite projective dimension is in turn contravariantly finite; the latter condition can, once again, be tested on 
suitable corners 
$e \Lambda e$ of 
the original algebra $\Lambda$.  In the (frequently occurring) positive case, the  sequence of 
consecutive strong tilts,  $\latilde$,  $\widetilde{\latilde}$,  $\widetilde{\widetilde{\latilde}}, \dots$, 
is shown to be periodic with period $2$ (up to 
Morita equivalence); moreover, any two adjacent categories in the sequence $\pinf (\modlatilde)$, 
$\pinf(\latiltilmod)$, $\pinf(\mbox{\rm mod-}\widetilde{\widetilde{\latilde}}), \dots$, alternating between right and left modules, are  dual via contravariant  $\Hom$-functors
 induced by tilting bimodules which are strong on both sides.
   
Our methods rely on comparisons of right $\pinf$-approximations in the categories
$\lamod$, $\elaemod$ and the Giraud 
subcategory of $\lamod$ determined by $e$; these interactions hold interest in their own right.  In particular, 
they underlie our analysis of the indecomposable direct summands of strong tilting modules. 
}
\end{abstract}

\maketitle

{\bf {2020} Mathematics Subject Classification: {16D90, 16G10, 16E05, 16E30, 16S90}}

\section{Introduction}

{\it Co- and contravariant finiteness\/} of a subcategory $\mathcal{A}$ of the category 
$\lamod$
 of finitely generated modules over an Artin algebra $\Lambda$ were first considered by Auslander and 
Smal\o\ in \cite{AS1} and \cite{AS2}: 
 If $\mathcal{A}= \add(\mathcal{A})$ 
is closed under extensions, the combination of these two finiteness conditions implies the existence of internal almost split sequences in 
$\mathcal{A}$.  
Here we focus on the full subcategory
 $\mathcal{A} = \pinf (\lamod)$ 
consisting of the modules of finite projective dimension; in this situation contravariant finiteness implies  the dual property, covariant finiteness \cite {BirgeSmalo}.  (See Section 2 for notation and definitions of the italicized terms.)  Subsequently, it was found that contravariant finiteness of 
$\pinf (\lamod)$
 entails a plethora of additional homological benefits for 
$\lamod$ 
and the unrestricted module category
 $\laMod$.  
Namely, this finiteness condition not only validates the finitistic dimension conjectures for left 
$\Lambda$-modules, i.e., 
confirms that 
$\lfindim( \Lambda)  = \text{l.Findim}(\Lambda)  < \infty$ 
in that case, but gives rise to an intrinsic description of the 
$\Lambda$-modules 
of finite projective dimension; see \cite{AR1} and \cite{BirgeSmalo}.  

Crucial to our present investigation are the following facts: 
$\pinf (\lamod)$ 
is contravariantly finite in
$\lamod$ 
if and only if
 $\lamod$
 contains a {\it strong tilting module\/},  
i.e., a tilting module $T$ which is relatively $\Ext$-injective within the category $\pinf(\lamod)$.  Such a module $T$ is alternatively characterized by the condition that the contravariant functor 
$\Hom_{\Lambda}(- , T)$
 induces a duality between 
$\pinf (\lamod)$ and 
a specifiable resolving subcategory of 
$\pinf(\modlatilde)$, 
where 
$\latilde = \End_{\Lambda} (T)^{\op}$ 
(see \cite{Birgedualities}  and \cite{HuisgenZimmermann-Saorin}).  In this situation we say that $\mod\text{-}\latilde$ results from $\lamod$ via strong tilting.  Finally, we recall that, up to isomorphism, 
$\lamod$ 
has at most one {\it basic\/} strong tilting module; see \cite{AR1}.

In the present article we tackle the foremost obstacle that stands in the way of applying the theory we sketched: namely, the notoriously difficult task of deciding whether, for specific 
choices of $\Lambda$, the category  $\pinf (\lamod) $ is contravariantly finite in $\lamod$.  The driving goals of our investigation are to significantly reduce this difficulty and to explore the possibility and effect of iterating the process of strongly tilting 
$\lamod$.
In both directions we make headway, next to retrieving known results in a simpler, more uniform format.  

In more detail:  In the first of our main results we address an Artin algebra $\Lambda$ for which
 $\pinf (\lamod)$ is known to be contravariantly finite in $\lamod$. 
In general, $\pinf(\modlatilde)$ need not inherit contravariant finiteness.  However, whenever it does, the process of strongly tilting $\lamod$ may be iterated in the following sense:  $\lamod$ {\it allows for unlimited iteration of strong tilting} if there exists an infinite sequence of basic algebras $(\la_i)_{i \ge 0}$ with $\la_0 = \la$ such that $\mod\text{-}\la_{i+1}$ results from $\la_i\text{-}\mod$ via strong tilting if $i$ is even, and $\la_{i+1}\text{-}\mod$ results from $\mod\text{-}\la_i$ via strong tilting if $i$ is odd.


\begin{teorA*}
{\rm (For a more complete version, see Theorem \ref{teor.strong-tilting-iteration} and Corollary \ref{cor.dualities-by-strongtilting}.)} Suppose that $\la$ is a basic Artin algebra such that $\pinf(\lamod)$ is contravariantly finite in $\lamod$.  Let $_\la T$ be the corresponding basic strong tilting module and $\latilde = \End_\la(T)^{\op}$. 

If $\pinf(\modlatilde)$ is contravariantly finite in $\modlatilde$, the category $\lamod$ allows for unlimited iteration of strong tilting.  The resulting sequence of basic Artin algebras $\la = \la_0$, $\la_1$, $\la_2$, $\la_3,  \dots$ is ultimately periodic with period $2$:
$\la_i \cong \la_j$ whenever $i$ and $j$ are positive integers with the same parity. Their categories of modules of finite projective dimension are  linked by dualities
\vskip.05truein 
\centerline{$\pinf(\modla _1) \leftrightarrow \pinf(\Lambda _2\text{\rm-mod}) \leftrightarrow \pinf(\modla _3) \leftrightarrow \cdots$} 
\vskip.05truein
\noindent induced by the contravariant $\Hom$-functors of the corresponding basic strong tilting bimodules $_{\la_i} (T_i) _{\la_{i+1}}$, resp. $_{\la_{i+1}} (T_i) _{\la_{i}}$. 

In general, $\la_0$ need not be isomorphic to $\la_2$ however.
\end{teorA*}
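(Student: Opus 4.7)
My plan is to proceed in three stages: first, carry out one iteration of strong tilting past $\widetilde{\la}$; second, promote this into a periodic infinite iteration of period $2$ starting at index $1$; third, sketch an example witnessing $\la_0 \not\cong \la_2$.

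Set $\la_1 := \widetilde{\la}$. Using the hypothesis that $\pinf(\modla_1)$ is contravariantly finite, let $T_1 \in \modla_1$ be its (unique) basic strong tilting module, and define $\la_2 := \End_{\la_1}(T_1)$, so that $T_1$ acquires a natural $(\la_2,\la_1)$-bimodule structure and $\la_2\text{-mod}$ results from $\modla_1$ via strong tilting. The functor $\Hom_{\la_1}(-,T_1)$ then induces a duality between $\pinf(\modla_1)$ and a resolving subcategory of $\pinf(\la_2\text{-mod})$.

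The heart of the proof, and the step I expect to be the main obstacle, is to show that ${}_{\la_2}T_1$ is itself a strong tilting module — the ``strong on both sides'' property of the bimodule $T_1$ promised in the abstract. Classical tiltingness of ${}_{\la_2}T_1$ is automatic from the bimodule data via the double endomorphism ring theorem for tilting bimodules, which also yields $\End_{\la_2}(T_1)^{\op} \cong \la_1$. The delicate ingredient is the additional $\Ext$-injectivity: one must show $\Ext^i_{\la_2}(P,T_1)=0$ for every $P \in \pinf(\la_2\text{-mod})$ and $i \ge 1$. I would prove this by transporting the known $\Ext$-injectivity of $(T_1)_{\la_1}$ across the duality $\Hom_{\la_1}(-,T_1)$ and its derived extension: for objects in the image (a resolving subcategory $\mathcal{Y}$ of $\pinf(\la_2\text{-mod})$) the vanishing follows directly from the contravariant derived equivalence associated to the tilting bimodule $T_1$, and one then extends from $\mathcal{Y}$ to all of $\pinf(\la_2\text{-mod})$ using that $\mathcal{Y}$ contains the projectives and is closed under extensions, direct summands, and kernels of epimorphisms. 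Once this $\Ext$-injectivity is secured, $\pinf(\la_2\text{-mod})$ is automatically contravariantly finite, $T_2 := {}_{\la_2}T_1$ is its basic strong tilting module, and $\la_3 := \End_{\la_2}(T_2)^{\op} \cong \la_1$. Because $T_1$ is now strong tilting on both sides, the two dualities $\Hom_{\la_1}(-,T_1)$ and $\Hom_{\la_2}(-,T_1)$ are mutually inverse, so each is forced onto the full $\pinf$-category on the target side.

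With this bilateral symmetry in hand, the iteration extends indefinitely by induction: at the $(i{+}1)$-st step, the identifications $\la_{2k+1}\cong\la_1$ and $\la_{2k}\cong\la_2$ together with uniqueness of basic strong tilting force $T_i$ to coincide, up to isomorphism, with $T_1$ or ${}_{\la_2}T_1$ according to the parity of $i$; the dualities between consecutive $\pinf$-categories in the statement are then the standard strong tilting dualities $\Hom_{\la_i}(-,T_i)$. For the concluding assertion, I would exhibit a small path algebra modulo relations for which the right $\latilde$-module $T_{\latilde}$ derived from ${}_\la T$, though tilting, fails to be strong tilting in $\modlatilde$; then $T_1 \not\cong T_{\latilde}$, and $\la_2 = \End_{\latilde}(T_1)$ can be separated from $\la_0 = \la$ in the chosen example by a direct comparison of quivers (or Cartan matrices).
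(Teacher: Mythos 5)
The overall architecture of your proposal does match the paper's (one further strong tilt, two-sided strongness of the new tilting bimodule, then induction, periodicity from index $1$, and an example for $\la_0\not\cong\la_2$), but the step you yourself single out as the heart of the matter --- the $\Ext$-injectivity $\Ext^i_{\la_2}(P,T_1)=0$ for all $P\in\pinf(\la_2\text{-mod})$ --- is not actually established by your argument. The image $\mathcal{Y}$ of the duality $\Hom_{\la_1}(-,T_1)$ is precisely $\pinf(\la_2\text{-mod})\cap{}^\perp({}_{\la_2}T_1)$, so vanishing of $\Ext^i_{\la_2}(-,T_1)$ on $\mathcal{Y}$ is tautological, and the statement to be proved is exactly that $\mathcal{Y}$ exhausts $\pinf(\la_2\text{-mod})$. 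Your proposed extension ``from $\mathcal{Y}$ to all of $\pinf(\la_2\text{-mod})$ because $\mathcal{Y}$ contains the projectives and is resolving'' has no force: a resolving subcategory of $\pinf$ can be proper (the subcategory $\pinf(\modlatilde)\cap{}^\perp(T_{\latilde})$ appearing in Corollary \ref{cor.dualities-by-strongtilting} is exactly such an instance), and $\Ext$-vanishing on a resolving subcategory gives no control over objects outside it. The reasoning is in effect circular.

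A sanity check shows it would prove too much: none of the ingredients you invoke (a tilting bimodule, its one-sided $\Ext$-injectivity, the induced duality onto a resolving subcategory of the $\pinf$-category on the other side) uses the fact that $\la_1$ itself arose by strongly tilting $\la_0$. The same argument applied to ${}_{\la_0}(T_0)_{\la_1}$ would show that $T_0$ is strong on both sides, forcing $\la_0\cong\la_2$ always --- contradicting the final assertion of the theorem (and the truncated path algebra examples with a precyclic source). The indispensable ingredient, which your proposal never uses, is the data carried over from the previous step: since ${}_{\la_0}T_0$ is strong, the Auslander--Green criterion (\cite[Proposition 6.5]{AR1}) guarantees that every simple right $\la_1$-module embeds into $(T_0)_{\la_1}$, a module of finite projective dimension. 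The paper then embeds $(T_0)_{\la_1}$ into an injective right $\la_1$-module, factors this embedding through a minimal $\pinf(\modla_1)$-approximation of that injective --- whose additive closure is $\add(T_1)$ by strongness of $T_1$ and Theorem \ref{teor.initial}(2) --- and concludes that every simple right $\la_1$-module embeds into $\soc T_1$; a second application of the criterion then yields strongness of ${}_{\la_2}T_1$. Your remaining steps (induction, period-$2$ behaviour from $\la_1$ on, and the separation of $\la_0$ from $\la_2$ in a concrete example) are fine in outline, but they all rest on the two-sided strongness step, so as written the proposal has a genuine gap precisely there.
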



To return to the problem of determining the contravariant finiteness status of
 $\pinf (\lamod)$ in the first place, we suppose that $e_1, \dots, e_n$ form a complete set of orthogonal primitive idempotents of $\la$ and that the subsum $e = e_1+ \dots + e_m$ includes all idempotents $e_i$ that give rise to simple left modules of infinite projective dimension.  Our second and third main results refer to this setting.
 
\begin{teorB*}
{\rm (For a complete version, see Theorem \ref{thm.reduction-to-simples-of-infteprojdim}.)}  Adopt  the notation of the preceding paragraph, and suppose that $e \Lambda (1 - e)$ has finite projective dimension as a left module over the corner algebra $e \Lambda e$.  Then $\pinf(\lamod)$ is contravariantly finite in $\lamod$ if and only if $\pinf(\elaemod)$ is contravariantly finite in $\elaemod$.
 \end{teorB*}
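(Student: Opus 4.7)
The proof uses the adjoint triple between $\lamod$ and $\elaemod$: the Schur functor $\epsilon = e(-)$, its left adjoint $\ell = \Lambda e \otimes_{e\Lambda e} -$, and its right adjoint $\rho = \Hom_{e\Lambda e}(e\Lambda, -)$, where $\rho$ identifies $\elaemod$ with the Giraud subcategory of $\lamod$ since $\epsilon\rho \cong \mathrm{id}$. The plan is to transport right $\pinf$-approximations between the two module categories via this adjoint triple, with preservation of finite projective dimension as the essential bridge.

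The key preliminary is that the hypothesis $\pdim_{e\Lambda e}(e\Lambda(1-e)) < \infty$ forces $\epsilon$ to carry $\pinf(\lamod)$ into $\pinf(\elaemod)$: for $M \in \pinf(\lamod)$ with finite projective resolution $P_\bullet \to M$, the complex $eP_\bullet \to eM$ remains exact, and each $eP_i$ is a direct summand of copies of $e\Lambda = e\Lambda e \oplus e\Lambda(1-e) \in \pinf(\elaemod)$. Analogous preservation statements for $\ell$ and $\rho$ on $\pinf$ may also be extracted from the hypothesis, via Tor- respectively Ext-vanishing for $\Lambda e$ and $e\Lambda$; these refinements are the delicate part. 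Separately, the choice of $e$ guarantees that every $\Lambda/\Lambda e\Lambda$-module has uniformly bounded projective dimension, its composition factors lying among $\{S_{m+1}, \ldots, S_n\}$; this applies in particular to $M/\Lambda eM$ and to $\Ker\eta_M$ and $\Coker\eta_M$ for the unit $\eta_M: M \to \rho(eM)$ (both of which are $e$-torsion by exactness of $\epsilon$).

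For the forward direction, given $Y \in \elaemod$, I would lift it to $\rho(Y) \in \lamod$, take a right $\pinf(\lamod)$-approximation $\phi: X \to \rho(Y)$, and apply $\epsilon$ to obtain $e\phi: eX \to Y$, with $eX \in \pinf(\elaemod)$ by the preservation lemma. For a test morphism $g: Z \to Y$ with $Z \in \pinf(\elaemod)$, the adjunction $\ell \dashv \epsilon$ combined with $Y \cong \epsilon\rho(Y)$ converts $g$ into $g^\sharp: \ell(Z) \to \rho(Y)$, which lies in $\pinf(\lamod)$ by the companion preservation for $\ell$; factoring $g^\sharp$ through $\phi$ and transporting back via adjunction yields the required factorization of $g$ through $e\phi$.

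For the backward direction, given $M \in \lamod$, I would take a right $\pinf(\elaemod)$-approximation $\psi: X' \to eM$ and form the pullback $P := M \times_{\rho(eM)} \rho(X')$ of $\rho(\psi)$ along $\eta_M$, with candidate approximation $P \to M$ the first projection. For a test morphism $g: Z \to M$ with $Z \in \pinf(\lamod)$, the property $eZ \in \pinf(\elaemod)$ allows a factorization $eg = \psi \circ h'$ for some $h': eZ \to X'$; the $\rho$-adjunct $\tilde h: Z \to \rho(X')$ paired with $g$, together with the naturality identity $\eta_M \circ g = \rho(eg) \circ \eta_Z$, induces the sought lift $Z \to P$ via the pullback's universal property. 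The principal obstacle is to justify the companion preservation statements for $\ell$ and $\rho$ from the one-sided hypothesis on $e\Lambda(1-e)$, and to certify that the pullback $P$ actually lies in $\pinf(\lamod)$: the latter combines the preservation for $\rho$ with the facts that $\Ker(P \to M) = \rho(\Ker\psi)$ and that the obstructions coming from $\eta_M$ are $\Lambda/\Lambda e\Lambda$-modules of bounded pd; if needed, $P$ can be refined using the short exact sequence $0 \to \Lambda eM \to M \to M/\Lambda eM \to 0$ so that the quotient (being already of finite pd) contributes a trivial summand to the approximation.
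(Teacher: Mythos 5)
Your overall architecture is essentially the paper's: both arguments transport right $\pinf$-approximations across the adjunctions attached to $e$, both rest on the fact that finitely generated modules killed by $e$ have finite (uniformly bounded) projective dimension, and your pullback of $\rho(\psi)$ along the unit $\eta_M$ is a close cousin of the paper's completion step, which realizes the approximation of a torsionfree $F$ as a pullback along $\mu_F\colon F\to F_\sigma$ (the maximal essential $\Delta$-extension of Proposition \ref{prop.poset of Delta-extensions}). You trade the paper's reduction to simple/torsionfree modules and its minimality statements for a construction valid for arbitrary $M$, which is legitimate, since Theorem B only asserts the equivalence of contravariant finiteness. (A side remark: $\rho=\Hom_{e\la e}(e\la,-)$ is right adjoint to $\mathbf{e}$ with $\mathbf{e}\rho\cong\mathrm{id}$, but it does not land in the Giraud subcategory in general --- the section functor is $X\mapsto(\la e\otimes_{e\la e}X)_\sigma$ as in Lemma \ref{2.2}; this inaccuracy is harmless for your argument.)

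The genuine gaps are exactly the two points you flag, and the hints you offer would not close them as stated. First, the technical heart is the paper's Lemma \ref{lem.preserving-reflecting finprojdim}: $\la e\otimes_{e\la e}-$ preserves finite projective dimension and $\mathbf{e}$ reflects it. This does not follow from the one-sided hypothesis on $e\la(1-e)$ alone, nor ``via Tor- respectively Ext-vanishing'': $\Ext^{>0}_{e\la e}(e\la,X)$ has no reason to vanish, and $\la e$ need not be flat over $e\la e$. The actual argument applies $\la e\otimes_{e\la e}-$ to a finite projective resolution of $X$ and uses that the homology of the resulting complex is annihilated by $e$, hence of finite projective dimension precisely because the simples attached to $1-e$ have finite projective dimension; an induction then bounds $\pdim_\la(\la e\otimes_{e\la e}X)$. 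Preservation by $\rho$ is then best obtained as a corollary of reflection by $\mathbf{e}$, applied to the finitely generated module $\rho(X')$ with $e\rho(X')\cong X'$. Second, your certification that the pullback $P$ lies in $\pinf(\lamod)$ is mis-aimed: the identity $\Ker(P\to M)=\rho(\Ker\psi)$ is of no use, since $\Ker\psi$ need not have finite projective dimension (its target $eM$ is arbitrary), and knowing that kernel says nothing when $M\notin\pinf(\lamod)$. What does work: $\mathbf{e}$ is exact, hence preserves the pullback, and $\mathbf{e}(\eta_M)$ is invertible, so $eP\cong X'\in\pinf(\elaemod)$ and reflection gives $P\in\pinf(\lamod)$; alternatively compare $P$ with $\rho(X')$ via the second projection, whose kernel is $\Ker\eta_M$ and whose cokernel embeds into $\Coker\eta_M$, both torsion of finite projective dimension. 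With Lemma \ref{lem.preserving-reflecting finprojdim} supplied, both of your directions do go through.
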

 
 Theorem \ref{thm.reduction-to-simples-of-infteprojdim} also spells out how, in case of existence,  the {\it minimal $($right$)$ $\pinf(\lamod)$-approximation\/} of $M \in \lamod$ relates to the minimal $\pinf(e \la e\text{-}\mod)$-approximation of $eM$.  The ensuing possibility of shucking off primitive idempotents in checking for contravariant finiteness of $\pinf (\lamod)$ not only provides effortless access to most of the cases in which this property has previously been established (formerly involving considerable effort), but yields contravariant  finiteness in far more general situations.

 Our argument relies on an exploration of the {\it torsion-torsionfree triple\/} $(\mathcal{C}, \mathcal{T}, \mathcal{F})$ which is associated to the idempotent $e$, and on the  {\it Giraud subcategory\/} $\mathcal{G}$ of $\laMod$ (in the sense of Gabriel \cite{Gabrielp}) corresponding to the hereditary torsion pair $(\mathcal{T}, \mathcal{F})$.

In order to extend Theorem B to an efficient test of whether the stronger conclusions of Theorem A hold for $\la$, we once more assume that $\la$ is a basic Artin algebra such that $\pinf(\lamod)$ is contravariantly finite.  As before, we let $e_1, \dots, e_n \in \la$ be a complete set of primitive idempotents, $T \in \lamod$  the basic strong tilting module, and $\latilde = \End_\la(T)^{\op}$. By $S_i$ we denote the simple left $\la$-module corresponding to $e_i$.  It is well-known that the indecomposable direct summands $T_1, \dots, T_n$ of $_\la T$ coincide, up to isomorphism, with the distinct indecomposable direct summands of $\bigoplus_{1 \le i \le n} \mathcal{A}_i$, where  $\mathcal{A}_i$ is the minimal $\pinf(\lamod)$-approximation of the indecomposable injective left module with socle $S_i$. Our analysis of the $T_i$ will pave the road towards showing that the question of unlimited iterability of strong tilting of $\lamod$ can in turn be played back to the corner algebra $e \la e$ for any idempotent $e$ as specified in Theorem B. 

\begin{teorC*}  Let $m \le n$ be chosen such that $\pdim_\la S_j < \infty$ for $j > m$.  Set $e = e_1 + \cdots + e_m$ and assume that $\pdim_{e \la e} e \la (1 - e) < \infty$.  Then:
 \smallskip
 
\noindent {\rm\textbf{(1)} (Proposition \ref{prop.decomposition-strongtilting}.)}  The number of distinct indecomposable direct summands of $\bigoplus_{1 \le i \le m} \mathcal{A}_i$ is $m$. Denote them by $T_1, \dots, T_m$ and set $T' = \bigoplus_{1 \le i \le m} T_i$.  

For $j \ge m+1$, the approximation $\mathcal{A}_j$ of $S_j$ decomposes in the form $\mathcal{A}_j =  T_j \oplus U_j$, where $T_j$ is indecomposable with the property that $S_j$ is the only simple module of finite projective dimension in $\soc T_j$, and all indecomposable direct summands of $U_j$ occur as direct summands of $T'$.
\smallskip

\noindent {\rm\textbf{(2)} (For more detail, see Theorem \ref{Birge5.3}, and Corollaries \ref{Birge5.4}, \ref{cor.iteration-stongtilting}.)}  $\lamod$ allows for unlimited iteration of strong tilting if and only if the same is true for $e \la e\text{-}\mod$.
\end{teorC*}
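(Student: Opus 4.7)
Both parts ride on the comparison of minimal right $\pinf$-approximations between $\lamod$ and $\elaemod$ that is the engine of Theorem B. For part (1), the starting point is the known fact that each $\mathcal{A}_j$ decomposes as a sum of pairwise nonisomorphic indecomposable summands of $T$, so the task reduces to identifying which of $T_1,\dots,T_n$ appear as summands of each $\mathcal{A}_j$ and to producing a socle-theoretic characterization of the distinguished summand $T_j$ when $j>m$.

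For $j\geq m+1$ the hypothesis $\pdim_\la S_j<\infty$ means $S_j\in\pinf(\lamod)$, so by the right-approximation property the inclusion $S_j\hookrightarrow E_j$ factors through a monomorphism $S_j\hookrightarrow\mathcal{A}_j$; letting $T_j\subseteq\mathcal{A}_j$ denote the unique indecomposable summand hosting this copy of $S_j$, one obtains $S_j\subseteq\soc T_j$. For any other indecomposable summand $X$ of $\mathcal{A}_j$ the image of $X$ in $E_j$ is nonzero by right-minimality of $\mathcal{A}_j$; the plan is then to argue, via a counting argument on $\dim\Hom(S_j,\mathcal{A}_j)$ against the number of ``$S_j$-socled'' summands of $T$, that no other finite-pdim simple can sit inside $\soc T_j$ and that every summand $X\neq T_j$ of $\mathcal{A}_j$ has socle free of finite-pdim simples. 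This simultaneously characterizes $T_j$ uniquely by the stated socle property and forces $X\in\{T_1,\dots,T_m\}$, yielding the decomposition of $U_j$.

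For part (2), Theorem B already handles the first iteration step: $\pinf(\lamod)$ is contravariantly finite iff $\pinf(\elaemod)$ is. The remaining task is to transport this equivalence across a single strong tilt and then invoke the period-two structure of Theorem A. Let $\etilde=\etilde_1+\cdots+\etilde_m\in\latilde$ be the sum of the idempotents corresponding to $T_1,\dots,T_m$ from part (1), so that $\etilde\latilde\etilde=\End_\la(T')^{\op}$ with $T'=T_1\oplus\cdots\oplus T_m$. The first goal is to show that $eT'$ is, up to basic normalization, the strong tilting module for $\elaemod$; this entails verifying $eT'\in\pinf(\elaemod)$ (using $\pdim_{e\la e}e\la(1-e)<\infty$ to push finite projective dimension through the functor $e(-)$), relative $\Ext$-injectivity in $\pinf(\elaemod)$, and the correct count of indecomposable summands. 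The identification $\etilde\latilde\etilde\cong\End_{e\la e}(eT')^{\op}\cong\widetilde{e\la e}$ is then immediate. By part (1), the simple right $\latilde$-modules of infinite projective dimension correspond exactly to $\etilde_1,\dots,\etilde_m$, so $\etilde$ satisfies the right-module analogue of the hypothesis of Theorem B applied to $\latilde$; this yields $\pinf(\modlatilde)$ contravariantly finite iff $\pinf(\mbox{mod-}\widetilde{e\la e})$ is. A single application of Theorem A to $\la$ and to $e\la e$ closes the equivalence.

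The principal obstacle is the identification $eT'\cong {}_{e\la e}\widetilde T$, which splits into two points. First, $eT'$ must have finite $e\la e$-projective dimension; here $\pdim_{e\la e}e\la(1-e)<\infty$ is used in an essential way, to convert a $\la$-projective resolution of $T'$ into an $e\la e$-projective resolution of $eT'$ of bounded length. Second, $eT'$ must inherit relative $\Ext$-injectivity against $\pinf(\elaemod)$; this should follow from the approximation-comparison theorem for $e(-):\lamod\to\elaemod$ in combination with the Giraud-subcategory framework developed for Theorem B, but verifying it in the form required for the strong-tilting characterization is the technical heart of the argument.
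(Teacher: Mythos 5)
Your overall strategy (reduce to the corner $e\la e$, then transport the hypotheses of Theorem \ref{thm.reduction-to-simples-of-infteprojdim} across the strong tilt and invoke Theorem \ref{teor.strong-tilting-iteration}) is the paper's strategy, but at both of the places where the actual work happens your sketch has a genuine gap. In part (1) you never establish the first claim at all: that $\bigoplus_{1\le i\le m}\mathcal{A}_i$ has exactly $m$ distinct indecomposable summands. The paper gets this from Theorem \ref{thm.reduction-to-simples-of-infteprojdim}(1) (the restriction $e\mathcal{A}_i$ is the minimal $\pinf(\elaemod)$-approximation of $eE(S_i)$, so $\bigoplus_{i\le m}e\mathcal{A}_i$ is the strong tilting object of $\elaemod$, which has $m$ indecomposables), transported back through the equivalence $\mathcal{G}\cap\lamod\approx\elaemod$ of Lemma \ref{2.2} using the fact that $\bigoplus_{i\le m}\mathcal{A}_i\in\mathcal{G}$ (Proposition \ref{prop.explicit calculation of approximation}(a)). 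For $j\ge m+1$, your ``counting argument on $\dim\Hom(S_j,\mathcal{A}_j)$'' is not an argument: nothing in your sketch bounds the multiplicity of $S_j$, or excludes other torsion simples $S_k$ ($k>m$), in $\soc\mathcal{A}_j$, and a simple submodule of a direct sum need not sit inside a single indecomposable summand, so ``the unique summand hosting this copy'' is not yet meaningful. The fact that does all of this is Proposition \ref{prop.explicit calculation of approximation}(c), $\Delta\bigl(\mathcal{A}(M)\bigr)\cong\Delta(M)$ applied to $M=E(S_j)$, whose proof (pullback along $M\to M/\Delta(M)$ plus the right-minimality analysis) is nontrivial and is not replaced by anything in your plan; it also supplies $\Delta(U_j)=0$, which together with the count $m$ is what forces $U_j\in\add(T')$.

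In part (2) you apply the right-module version of Theorem \ref{thm.reduction-to-simples-of-infteprojdim} to $(\latilde,\etilde)$ after discussing only the condition on simples, but that theorem has a second hypothesis, namely $\pdim_{\etilde\latilde\etilde}(1-\etilde)\latilde\etilde<\infty$, which your proposal never mentions. This is the harder half of Theorem \ref{Birge5.3}: it amounts to $\pdim_\Gamma\Hom_\la(T'',T')<\infty$ for $\Gamma=\End_\la(T')^{\op}$ and needs the exactness of $\Hom_\la(-,T')$ on $\pinf(\lamod)$, the comparison map $\rho:\mathcal{A}_i\to\widehat{\mathcal{A}}_i$ from Proposition \ref{prop.explicit calculation of approximation}(c), and the transfer through the corner; without it the equivalence between contravariant finiteness of $\pinf(\modlatilde)$ and of $\pinf(\mbox{\rm mod-}\widetilde{e\la e})$ is unjustified. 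Two further points: the identification of $eT'$ as the basic strong tilting module of $\elaemod$, which you flag as the technical heart and leave as ``should follow,'' is in the paper immediate from Theorem \ref{thm.reduction-to-simples-of-infteprojdim}(1) together with Theorem \ref{teor.initial}(2) -- there is no need to recheck the tilting axioms; and even the simples condition needs more than ``by part (1)'': one must identify $\Hom_\la(S_j,T)\cong\etilde_j\latilde/\etilde_j\Jtilde$ for $j>m$ using $S_j=\Delta(\soc T_j)$ from Proposition \ref{prop.decomposition-strongtilting} and then invoke the duality induced by $\Hom_\la(-,T)$ to get finite projective dimension (your phrase ``correspond exactly'' also overstates what is true or needed). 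As it stands, the proposal reproduces the paper's road map but omits precisely the comparison results that make it work.
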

 
Our proof of Theorem C is based on connections between $T \in \lamod$ and the strong tilting objects in $e \la e$-mod and in the Giraud subcategory $\mathcal{G}$.

Earlier results for truncated path algebras (i.e., for algebras of the form 
$KQ/I$, where $K$ is a field, $Q$ a quiver, and $I$ the ideal generated by the paths of length $L+1$ for
 some $L \ge 1$), showcase the effects of iterated strong tilting and the associated dualities among the $\pinf$-categories encountered along the iterations \cite{Birgedualities}; these findings readily follow from the above theorems. In fact, the present results yield substantial generalizations of the picture that arose in the truncated case (Propositions \ref{prop.precyclic-contrfiniteness}, \ref{prop.precyclic-stiltingiteration}, Corollaries  \ref{cor.Birge-Theorem D} and \ref{cor.precyclic-stilting iteration}).  From our reduction technique it also follows that, for any left serial algebra
 $\Lambda$, both $\pinf (\lamod)$ and $\pinf(\modla)$ are contravariantly finite in $\lamod$ and $\modla$, respectively (the former fact had been known; see \cite{BH}).  Further applications address Artin algebras arising from Morita contexts, such as algebras of triangular matrix type (Theorem \ref{thm.mainMoritacontexts}, Corollary \ref{cor.triangular matrix}, Examples \ref{ejems.effective construction}), and the elimination of simple modules of low projective dimension in the test for contravariant finiteness ({Proposition \ref{prop.elimination-simples-projdim1}}).

Section 2 assembles minimal conceptual background and  builds the tools required for proving our main theorems.  Section 3 provides a general characterization of the situation in which $\lamod$ allows for unlimited iteration of strong tilting.  The results targeting tests for contravariant finiteness of $\lamod$ 
and for the availability of repeated strong tilts of $\lamod$ are contained in Sections 4 and 5, respectively.  In Section 6, we specify applications.

\section {Notation, background and auxiliaries}
Throughout, $\Lambda$ will be a basic Artin algebra, and $J$ its Jacobson radical. 
 We point out that the restriction to basic algebras does not affect the generality of our investigation; we adopt it 
for increased transparency of the underlying ideas. $\laMod$ and $\lamod$ 
stand for the categories of all (resp., 
all finitely generated) left $\la$-modules.   Further, $S_1, \dots, S_n$ will be isomorphism representatives of the 
simple objects in $\laMod$, and $\top M$, $\soc M$ 
will stand for the top and socle of $M \in \laMod$, 
respectively.  By 
$\mathcal{P}^{<\infty}(\laMod)$ (resp., $\mathcal{P}^{<\infty}(\lamod)$)
 we denote the full subcategory of 
$\laMod$ (resp., $\lamod$) 
having as objects the modules of finite projective dimension.  Moreover, for any finitely generated 
$\la
$-module $M$, we denote by $\add(M)$ the full subcategory of 
$\lamod$
 consisting of the direct summands of 
finite direct sums of copies of $M$.  The module $M$ is {\it basic\/} if it
 has no indecomposable direct summands of 
multiplicity $> 1$.

\subsection{Contravariant finiteness of  $\mathcal{P}^{<\infty}(\lamod)$ and strong tilting 
modules}
Following Miyashita \cite{Mia},
 we call a left $\Lambda$-module $T$ a {\it tilting module\/} in case {\bf (1)} $T$ belongs to 
$\mathcal{P}^{<\infty}(\lamod)$, {\bf (2)} $\Ext^i_{\Lambda}(T, T) = 0$ for $i \geq 1$, and {\bf (3)} there exists an exact sequence
 $0 \rightarrow  {_{\la} \la} \rightarrow T_0 \rightarrow \cdots \rightarrow T_m \rightarrow 0$ with $T_j \in \add(T)$.  It is well known that any basic tilting module has precisely
$n =$  rank $K_0(\Lambda)$
 indecomposable direct summands, and that any tilting module 
$T \in \lamod$ 
gives rise to a left-right symmetric situation as follows:  If 
$\latilde = \End_{\Lambda}(T)^{\op}$, 
then the right $\latilde$-module $T_{\latilde}$ is in turn a tilting module and $\End_{\latilde} (T)$ is canonically isomorphic to $\la$.  This justifies the reference to a {\it tilting bimodule\/} ${_{\Lambda} T_{\latilde}}$.  Strong tilting modules were first considered by Auslander and Reiten in \cite{AR1}.  We introduce them via a characterization equivalent to the original definition (see \cite{AR1}): 
 Namely, we call a tilting module $T$ {\it strong\/} if it satisfies the following relative injectivity condition in $
\mathcal{P}^{<\infty}(\lamod)$:  {\bf (4)} $\Ext^i( M ,T) = 
0$ for all $M \in \pinf(\lamod)$ and $i \geq 1$.  It was shown by Auslander 
and Reiten [loc.cit.] that $\lamod$ has a strong tilting module if and only if the category $\mathcal{P}^{<\infty}(\lamod)$ is {\it contravariantly finite in $\lamod$\/},  a property which we will recall next.  Moreover, according to [loc.cit], in case of existence, there is precisely one basic strong tilting module in 
$\lamod$, up to isomorphism.

The concept of contravariant finiteness of subcategories of $\lamod$ has its roots in 
work of Auslander and Smal\o\ \cite{AS1}: Namely, the
 category $\mathcal{P}^{<\infty}(\lamod)$ 
is {\it contravariantly finite in $\lamod$\/} provided that for each object $M \in \lamod$, 
the functor $\Hom_{\Lambda} ( -, M)_{|_{\mathcal{P}^{<\infty}(\la\text{-mod})}}$ is finitely generated in the category of additive contravariant functors $\mathcal{P}^{<\infty}(\lamod) \rightarrow \mathbf{Ab}$. 
 This condition translates into the following requirement for arbitrary $M \in \lamod$:  There is an object $A \in \mathcal{P}^{<\infty}(\lamod)$, together with a map $\phi \in \Hom_{\Lambda}(A, M)$, 
such that each map in $\Hom_{\Lambda}(\mathcal{P}^{<\infty}(\lamod), M)$ factors through $\phi$. 
 Any such  pair $(A, \phi)$ is called a ({\it right\/}) {\it $\mathcal{P}^{<\infty}(\lamod)$-approximation of $M$\/}.  Since we will only consider right approximations, we will frequently omit the qualifier ``right".  By a mild abuse of terminology, we will moreover refer to the domain $A$ of $\phi$ as a $\mathcal{P}^{<\infty}(\lamod)$-approximation of $M$.  Suppose $M$ has a 
$\mathcal{P}^{<\infty}(\lamod)$-approximation, say $\phi: A \rightarrow M$.  As was shown by Auslander-Smal\o\ in [loc.cit.], up to isomorphism, there is only one approximation
 $\phi: \mathcal{A}(M) \rightarrow M$ such that  $\mathcal{A} (M)$ has minimal length. 
 It is alternatively characterized by the condition that any endomorphism $u$ of $\mathcal{A} (M)$ which 
satisfies $\phi \circ u = \phi$ is an automorphism; we say that the map $\phi$ is {\it $($right$)$ minimal\/} if this implication holds.  Since it is unlikely to cause misunderstandings, we will also refer to $\mathcal{A}(M)$ as ``the" minimal $\mathcal{P}^{<\infty}(\lamod)$-approximation of $M$ whenever convenient.

The mentioned existence result by Auslander-Reiten will be crucial in the sequel.  We state it for easy reference.

\begin{teor} {\bf (1)} \label{teor.initial} {\rm \cite{AR1}} There exists a strong tilting module $T \in \lamod$ if and only 
if $\mathcal{P}^{<\infty}(\lamod)$ is contravariantly finite in $\lamod$. 
 In the positive case, the basic strong tilting module is unique up to isomorphism:  it is the direct sum of the distinct indecomposable modules $C \in \mathcal{P}^{<\infty}(\lamod)$ which satisfy 
$\Ext^i_{\Lambda}(\mathcal{P}^{<\infty}(\lamod), C) = 0$ for $i \geq 1$.
\smallskip

{\bf (2)} {\rm \cite [Supplement II in Section 2.A]{Birgedualities}}  A more explicit description of the indecomposable direct summands of a strong tilting module $T${, when it exists,} can be obtained from the following:  Let $\mathcal{A}$ be the minimal 
$\pinf (\lamod) $-approximation of the minimal injective cogenerator of
 $\laMod$.  Then  $\add(T) = \add(\mathcal{A})$.

\end{teor}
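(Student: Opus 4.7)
My plan is to treat both parts together, with part (2) furnishing the explicit construction of the tilting module whose existence is asserted in part (1). The conceptual pivot is to characterize the basic strong tilting module intrinsically, as the direct sum of the distinct indecomposables $C\in\pinf(\lamod)$ satisfying $\Ext^{i}_{\la}(\pinf(\lamod),C)=0$ for $i\geq 1$; once this characterization is in place, uniqueness and both parts will follow together.

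For the nontrivial direction of (1), assume $\pinf(\lamod)$ is contravariantly finite and define $\mathcal{A}$ as in (2) via the minimal approximation $\phi:\mathcal{A}\to E$ of the minimal injective cogenerator $E$ of $\laMod$. Since every $M\in\lamod$ embeds in $E^{(k)}$ for some $k$, in particular $_{\la}\la\hookrightarrow E^{(k)}$, lifting this inclusion through $\phi^{(k)}:\mathcal{A}^{(k)}\to E^{(k)}$ produces a short exact sequence $0\to\la\to\mathcal{A}^{(k)}\to N\to 0$ whose cokernel still has finite projective dimension. Iterating on $N$ and invoking boundedness of the finitistic dimension (which is finite once any $T\in\pinf$ of the required shape exists) yields a finite $\add(\mathcal{A})$-coresolution of $_{\la}\la$, establishing tilting axiom (3).

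The crux is the strong injectivity $\Ext^{i}_{\la}(\pinf(\lamod),\mathcal{A})=0$, which simultaneously delivers axiom (2) and the strong property. Given $X\in\pinf(\lamod)$, embed $X\hookrightarrow E^{(k)}$, lift along $\phi^{(k)}$ to obtain $f:X\to\mathcal{A}^{(k)}$, and let $Z$ be the cokernel. A standard $\Ext$ dimension shift reduces $\Ext^{i}(X,\mathcal{A})=0$ to $\Ext^{i-1}(Z,\mathcal{A})=0$, so one would like to induct on projective dimension; for this to close, $Z$ must itself lie in $\pinf$, which in turn requires $f$ to be injective. This is where minimality of the approximation $\phi$ is essential: a non-minimal approximation would not prevent cancellation of redundant summands, whereas the minimality characterization (any endomorphism $u$ of $\mathcal{A}$ with $\phi u=\phi$ is an automorphism) forces the lifted $f$ to be well-behaved, and a direct computation with the resulting cokernel sequence propagates finite projective dimension from $X$ to $Z$.

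For the forward direction of (1), suppose $T$ is strong tilting. Given $M\in\lamod$, the inequality $\pdim T<\infty$ together with axiom (3) bounds the finitistic dimension of $\la$, so a sufficiently high syzygy $\Omega^{N}M$ lies in $\pinf(\lamod)$; using that $\pinf(\lamod)$ is resolving and that $T$ is $\Ext$-injective on $\pinf(\lamod)$, an $\add(T)$-approximation of $\Omega^{N}M$ can be pulled back along the projective resolution of $M$ to produce a $\pinf(\lamod)$-approximation. Uniqueness of the basic strong tilting module and part (2) are then formal consequences of the intrinsic characterization. The main obstacle in the entire plan is the cokernel argument in the previous paragraph: controlling $Z$ is delicate precisely because $\pinf$ is not closed under cokernels in general, and the argument must extract exactly what minimality of approximations offers beyond their mere existence.
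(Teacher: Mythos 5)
First, a remark on the comparison itself: the paper offers no proof of this statement -- Theorem \ref{teor.initial} is quoted background, with part (1) cited to \cite{AR1} and part (2) to \cite{Birgedualities} -- so your proposal can only be measured against the classical arguments, whose overall skeleton (take the minimal $\pinf(\lamod)$-approximation $\mathcal{A}$ of the injective cogenerator, prove $\Ext^{\geq 1}_\la(\pinf(\lamod),\mathcal{A})=0$, build an $\add(\mathcal{A})$-coresolution of $_\la\la$) you have correctly identified. The execution, however, has genuine gaps. The central one is the ``crux'' paragraph: from $0\to X\to\mathcal{A}^{(k)}\to Z\to 0$, applying $\Hom_\la(-,\mathcal{A})$ gives $\Ext^i_\la(\mathcal{A}^{(k)},\mathcal{A})\to\Ext^i_\la(X,\mathcal{A})\to\Ext^{i+1}_\la(Z,\mathcal{A})$, so the shift goes \emph{up}, to a new module in higher degree, and moreover requires the self-orthogonality $\Ext^i_\la(\mathcal{A},\mathcal{A})=0$, which is part of what you are trying to prove; your claimed reduction to $\Ext^{i-1}_\la(Z,\mathcal{A})$ would only be valid if the middle term were already known to be $\Ext$-injective relative to $\pinf(\lamod)$, i.e.\ it is circular. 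You also locate the role of minimality in the wrong place: injectivity of the lift $f$ is automatic (because $\phi^{(k)}\circ f$ is injective), and $Z\in\pinf(\lamod)$ is automatic from $\pdim Z\leq\max\{\pdim\mathcal{A}^{(k)},\,\pdim X+1\}$, so no minimality is needed there and none of this yields the vanishing. Where minimality actually enters is a Wakamatsu-type splitting argument that your sketch never makes: given an extension $0\to\mathcal{A}\to Y\to X\to 0$ with $X\in\pinf(\lamod)$, closure of $\pinf(\lamod)$ under extensions puts $Y$ in $\pinf(\lamod)$, injectivity of $E$ extends $\phi$ to $Y$, the approximation property factors that extension back through $\phi$, and minimality forces the resulting endomorphism of $\mathcal{A}$ to be an automorphism, so the sequence splits and $\Ext^1_\la(X,\mathcal{A})=0$; the case $i\geq 2$ then follows by shifting along syzygies $\Omega^{i-1}X$, which stay in $\pinf(\lamod)$ because that category is resolving.

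Two further steps would fail as written. In the forward direction you assert that finiteness of the finitistic dimension makes ``a sufficiently high syzygy $\Omega^N M$'' lie in $\pinf(\lamod)$ for arbitrary $M$; this is false, since $\Omega^N M\in\pinf(\lamod)$ if and only if $M\in\pinf(\lamod)$, so your construction of approximations from an $\add(T)$-approximation of $\Omega^N M$ collapses for exactly the modules that matter (the correct route in \cite{AR1} builds the approximation from the tilting machinery attached to the strong tilting bimodule, not from syzygies). In the backward direction, your termination argument for the coresolution of $_\la\la$ invokes finiteness of the finitistic dimension ``once any $T\in\pinf$ of the required shape exists'' -- but the existence of $T$ is the conclusion; one must instead use the separate (nontrivial) result of \cite{AR1} that contravariant finiteness of $\pinf(\lamod)$ already forces $\lfindim\la<\infty$, or derive the bound from the $\Ext$-injectivity of $\mathcal{A}$ by dimension shifting. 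Finally, uniqueness and part (2) are not ``formal consequences'': one still needs the short but essential argument that any indecomposable $C\in\pinf(\lamod)$ with $\Ext^{\geq1}_\la(\pinf(\lamod),C)=0$ embeds into some $\mathcal{A}^{(k)}$ via the approximation of $C\hookrightarrow E^{(k)}$, has cokernel in $\pinf(\lamod)$, and hence splits off, giving $C\in\add(\mathcal{A})$ -- together with the converse that every indecomposable summand of $\mathcal{A}$ has this $\Ext$-injectivity (which is again the crux lemma). As it stands, the proposal identifies the right objects but does not contain a working proof of the key vanishing, and its forward direction rests on a false statement.
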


\subsection{The TTF-triple associated to an idempotent $e$ in $\Lambda$ } \label{subsec.TTFtriple}

We fix an idempotent element $e \in \Lambda$. 
 By Jans \cite {JANS} (see also 
\cite [Section VI.8] {STEN}), $e$ defines a TTF triple 
 $(\mathcal{C}_e,\mathcal{T}_e,\mathcal{F}_e)$  in the category $\laMod$; this means that the pairs 
$(\mathcal{C}_e,\mathcal{T}_e)$  and  $(\mathcal{T}_e,\mathcal{F}_e)$
are 
both torsion pairs.  The torsion, resp. torsionfree, classes are as follows:

(a) $\mathcal{C}_e$ consists of the  $\Lambda$-modules $C$ generated by {$\Lambda e$}, i.e., the modules of the form 

\ \ \ \, $C=\Lambda eC$.

(b) $\mathcal{T}_e$ consists of the left $\Lambda$-modules annihilated by $e$. 

(c) $\mathcal{F}_e$ consists of the $\Lambda$-modules $F$
with the property that the annihilator 

\ \ \ \, $\ann_{F}( e \Lambda)$ of $e \Lambda$ in $F$ is zero.

 \noindent Observe that the torsion pair $(\mathcal{T}_e,\mathcal{F}_e)$ is hereditary, whence the corresponding torsion 
radical is left exact (see \cite[Proposition VI.3.1]{STEN}).  On the other hand, 
$(\mathcal{C}_e,\mathcal{T}_e)$ fails to be hereditary 
in general.


{\bf Further notation:}  Since we will keep the idempotent $e$ fixed, we will more briefly write 
\vskip.06truein
\centerline{$(\mathcal{C}, \mathcal{T}, \mathcal{F})$\ \  for\ \  $(\mathcal{C}_e, \mathcal{T}_e, \mathcal{F}_e)$.}
\vskip.06truein

\noindent The torsion radicals $\nabla$ and $\Delta$ associated to the pairs 
$(\mathcal{C} ,\mathcal{T})$  and $(\mathcal{T},\mathcal{F})$ are the 
 idempotent subfunctors of the identity functor on $\laMod$ given by:
\vskip.06truein

\centerline{$\nabla(M) = \Lambda e M \ \ \ \text{and} \ \ \ \Delta(M) = \ann_{M} (e\Lambda) = \{ x \in M \mid e
 \Lambda x = 0\},$} 
\vskip.06truein

\noindent respectively. Note that $\nabla(M)$ is the largest submodule of $M$ with the property that all simple modules in the top of $\nabla(M)$ 
belong to $\mathcal{F}$ $\bigl($equivalently, $\top \bigl(\nabla(M)\bigr)$ belongs to ${\add} (\la e/ Je) \bigr)$, whereas $\Delta(M)$ is the largest submodule of $M$ which is annihilated by $e$ $\bigl($equivalently, all simple subfactors of $\Delta(M)$ belong to ${\add} \bigl(\la(1 -  e)/ J(1- e)\bigr)\bigr)$. 

A third functor $\laMod  \rightarrow \laMod $ will serve to render the constructions in Sections 4 and 5 more transparent.  It assigns to each $\la$-module $M$ the following subfactor of $M$: 
\vskip.06truein
\centerline{$\mathfrak{core}(M)\  =\  \nabla(M)/ \Delta\bigl(\nabla(M)\bigr)\ = 
\ \Lambda e M/ \Delta(\Lambda eM)$.}
\vskip.06truein

\noindent If $M$ is finitely generated, the core of $M$ has maximal length among the subfactors $V/ U$ of $M$ such that the top and socle of $V/U$ belong to $\mathcal{F}$, i.e., such that  
$\top(V / U), \soc(V/U) \in \add(\Lambda e / J e)$.  In fact, it can easily be seen that this maximality property determines 
$\mathfrak{core}(M)$ up to isomorphism.  Observe moreover, that $\mathcal{C} \cap \mathcal{F}$ consists of the $\la$-modules 
which coincide with their cores.

\begin{ejem}  \label{ex.delta.nabla}  To illustrate these endofunctors, we 
let $\la =KQ/ \langle R \rangle$, where $K$ is a field, $Q$ the quiver 
 $$\xymatrixrowsep{3pc}\xymatrixcolsep{4pc}\xymatrix{1 \ar@<0.5ex> [d]^{c}  & 3   \ar@<0.5ex>  [l]^{\alpha_1} \ar@<- 0.5ex>  [l] _{\alpha_2}
 \ar@<0.5 ex> [d]^{a}  \\
2  \ar@<0.5ex>  [r]^{\beta_2} \ar@<-0.5ex>  [r] _{\beta_1}  \ar@<0.5ex>  [u]^{d}       &4  \ar@<0.5ex>[u]^{b}
}$$
\noindent and $R=\{ab, cd, dc, \beta_1c,\beta_2c,\}\cup\{\alpha_kb\beta_l\text{: }k,l\in\{1,2\}\}$.

We consider the TTF triple $(\mathcal{C},\mathcal{T},\mathcal{F})$ associated to  $e=e_1+e_2$  and apply the functors $\nabla$, $\Delta$,  $\mathfrak{core}$ to the indecomposable injective left $\Lambda$-modules shown below:
$$\xymatrixrowsep{1.5pc}\xymatrixcolsep{1pc}
\xymatrix{
 &  & && 3  \edge[d] &  & 3  \edge[d] && && &&&&\\
 3 \edge[d] & 3 \edge[d] &  && 4 \edge[d] &   & 4 \edge[d]&&  & &  && & & \\
4 \edge[d]  & 4 \edge[d]   & && 3  \edge[dr] &   &  3  \edge[dl] && 3 \edge[dr] &2 \edge[d] &2 \edge[dl]  
&& & & \\
 3 \edge[dr] & 3 \edge[d] & 2 \edge[dl]  &&   & 1  \edge[d] &     &&    & 4 \edge[d] &    &&   3 \edge[dr] &2 \edge[d] &2 \edge[dl]  \\
 & 1 & &&   & 2 &  && &3& && &4&
}$$
\medskip

\begin{enumerate}
\item $\Delta (I_j)=0$ $($i.e. $I_j\in\mathcal{F}$$)$ for $j=1,2$. The submodules  $\Delta (I_3)$ of $I_3$ and $\Delta (I_4)$ of $I_4$ are depicted by the following diagrams:
$$\xymatrixrowsep{1.5pc}\xymatrixcolsep{1pc}
\xymatrix{
& && 3 \edge[d] &&&&&&& 3 \edge[d]\\
& && 4\edge[d] &&&&&&& 4\\
& && 3 &&&&&&& \\
} $$

\item The submodules $\nabla (I_j)$, for $j=1,...,4$, are depicted by the diagrams
$$\xymatrixrowsep{1.5pc}\xymatrixcolsep{1pc}
\xymatrix{
&& &  2 \edge[d]  &&  1\edge[d]   &&  2 \edge[dr] & & 2 \edge[dl] && 2 \edge[dr] & & 2\edge[dl] \\  
& && 1 && 2 &&               & 4 \edge[d]  &                                    &&   & 4 & \\
& &&  &&  &&               & 3 &                                    &&   &  & \\
} $$

\item Finally, $\mathfrak{core}(I_j)=\nabla (I_j)$, for $i=1,2$, and
$\mathfrak{core}(I_3)\cong S_2\oplus S_2\cong \mathfrak{core}(I_4)$. 
\end{enumerate}

\end{ejem}

 We point out that, in general, the core of a module may also have simple composition factors annihilated by $e$; for a broader spectrum of examples, see \cite{Birgedualities}, \cite{HuisgenZimmermann-Saorin}.


\subsection {The adjoint pair $(\Lambda  e \otimes_{e \Lambda  e} -,\, \bf {e})$ } \label{2.3}
\medskip

By $\bf {e}$ we denote the functor $\laMod \rightarrow e \Lambda e\text{-Mod}$ 
which sends $M$ to $eM$.
The two functors of the title thus play the role of induction and restriction in the exchange of information between $e \Lambda e$-modules on one hand and $\la$-modules on the other.
Accordingly, $\Lambda e \otimes_{e \Lambda e} - : e \Lambda e\text{-Mod} \rightarrow \laMod$ is left adjoint to 
$ \bf {e}$.

The unit corresponding to this adjunction is the obvious natural transformation
$$\eta:  1_{e \Lambda e\text{-Mod}}\  \longrightarrow\ { \bf {e}} \circ (\Lambda e \otimes_{e \Lambda e} -), \quad U \longmapsto e  (\Lambda e \otimes_{e \Lambda e} U).$$ 
Clearly, $\eta$ is an isomorphism of functors, whence $\Lambda e \otimes_{e \Lambda e} -$ is fully faithful (see \cite[{Proposition II.7.5}]{HS}).   

The counit $\epsilon: (\Lambda e \otimes_{e \Lambda e} - ) \circ {\bf {e}}\  \rightarrow\  1_{\laMod}$ of the adjunction
is given by the family $(\epsilon_M)_{M \in \laMod}$, where  $\epsilon_M: \Lambda e \otimes_{e \Lambda e} eM \rightarrow M$ sends $a \otimes x$ to $ax$.  Note that the image $\Im(\epsilon_M)$ equals $\nabla(M)$.

We briefly explore the functor ``restriction followed by induction", 
$\laMod \rightarrow \laMod$, which sends $M \in \laMod$ to 
$M^{\ddagger} : = \Lambda e \otimes_{e \Lambda e} eM$.  Evidently, this functor sees only the core of a $\la$-module $M$; indeed, $M^{\ddagger}$ is naturally isomorphic to $\mathfrak{core}(M)^{\ddagger}$.  On the other hand, it preserves this core, as the following lemma shows.

\begin{lema} \label{core} The functor $\mathfrak{core}:  \laMod \rightarrow  \laMod$ is naturally equivalent to the functor 
$$\bigl(\Lambda e \otimes_{e \Lambda e} e(-)\bigr)/ \Delta\bigl(\Lambda e \otimes_{e \Lambda e} e(-)\bigr).$$
More precisely, the counit $\epsilon$ of the adjunction induces a family of isomorphisms
$$ \overline{\epsilon}_M :  \mathfrak{core}(M^\ddagger) = (\Lambda e \otimes_{e \Lambda e} eM)/ \Delta(\Lambda e \otimes_{e \Lambda e} eM) \ \ \cong \ \   \mathfrak{core}(M), \ \ \  M \in \laMod,$$
which is natural in $M$.
\end{lema}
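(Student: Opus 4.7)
The plan is to factor the counit $\epsilon_M:M^\ddagger\to M$ through $\mathfrak{core}(M)$ and identify the resulting kernel with $\Delta(M^\ddagger)$. First I would observe that $M^\ddagger=\Lambda e\otimes_{e\Lambda e}eM$ lies in $\mathcal{C}$: every generator $a\otimes x$ with $a=ae\in\Lambda e$ equals $\lambda\cdot(e\otimes x)$ for a suitable $\lambda\in\Lambda$, so $M^\ddagger\subseteq\Lambda e\cdot M^\ddagger$. Hence $\nabla(M^\ddagger)=M^\ddagger$, and therefore $\mathfrak{core}(M^\ddagger)=M^\ddagger/\Delta(M^\ddagger)$, matching the identification appearing in the lemma.

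Since $\Im(\epsilon_M)=\Lambda eM=\nabla(M)$, composing $\epsilon_M$ with the canonical projection $\nabla(M)\twoheadrightarrow\mathfrak{core}(M)$ yields a surjection $\tilde\epsilon_M:M^\ddagger\twoheadrightarrow\mathfrak{core}(M)$. For any $x\in\Delta(M^\ddagger)$, $\Lambda$-linearity of $\epsilon_M$ gives $e\Lambda\cdot\epsilon_M(x)=\epsilon_M(e\Lambda x)=0$, so that $\epsilon_M(x)\in\Delta(\nabla(M))$ and $\tilde\epsilon_M(x)=0$. This establishes the inclusion $\Delta(M^\ddagger)\subseteq\ker\tilde\epsilon_M$, so $\tilde\epsilon_M$ descends to a surjection $\overline{\epsilon}_M:\mathfrak{core}(M^\ddagger)\twoheadrightarrow\mathfrak{core}(M)$.

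The crux of the argument is the reverse inclusion $\ker\tilde\epsilon_M\subseteq\Delta(M^\ddagger)$, which provides the injectivity of $\overline{\epsilon}_M$. Given $x\in M^\ddagger$ with $\epsilon_M(x)\in\Delta(\nabla(M))$, $\Lambda$-linearity of $\epsilon_M$ again yields $\epsilon_M(e\Lambda x)=0$; moreover every $y\in e\Lambda x$ satisfies $ey=y$, so $e\Lambda x\subseteq eM^\ddagger$. Here the triangle identity $\mathbf{e}(\epsilon_M)\circ\eta_{eM}=\id_{eM}$ becomes decisive: since $\eta_{eM}$ is already known to be an isomorphism (\S\ref{2.3}), so is its right inverse $\mathbf{e}(\epsilon_M)=\epsilon_M|_{eM^\ddagger}$. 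Consequently $e\Lambda x=0$, i.e. $x\in\Delta(M^\ddagger)$, as required.

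Finally, naturality of the family $(\overline{\epsilon}_M)_{M\in\laMod}$ is inherited from the naturality of $\epsilon$ together with the functoriality of $\nabla$, $\Delta$, and their composition $\mathfrak{core}$, and is routine to verify. The main obstacle is the kernel identification in the third paragraph — without the fact that $\epsilon_M$ restricts to a bijection on the $e$-part of $M^\ddagger$, one retains only the easy inclusion $\Delta(M^\ddagger)\subseteq\ker\tilde\epsilon_M$ and the argument does not close.
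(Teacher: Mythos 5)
Your proof is correct and follows essentially the same route as the paper: both factor the counit through $\mathfrak{core}(M)$, note the easy inclusion $\Delta(M^\ddagger)\subseteq\ker$, and obtain the reverse inclusion from the fact that applying $\mathbf{e}$ to the counit gives an isomorphism (you via the triangle identity and an element-wise argument, the paper via exactness of $\mathbf{e}$ and maximality of $\Delta(M^\ddagger)$ among submodules killed by $e$). Your explicit observation that $M^\ddagger\in\mathcal{C}$, so that $\mathfrak{core}(M^\ddagger)=M^\ddagger/\Delta(M^\ddagger)$, is a small point the paper leaves implicit but is welcome.
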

\begin{proof}  Clearly, $\epsilon$ gives rise to a family of epimorphisms  
$${\rho_M: \Lambda e \otimes_{e \Lambda e} eM \rightarrow \nabla(M) / \Delta(\nabla(M)) = \mathfrak {core}(M),}$$
and 
$\Ker(\rho_M)$ contains 
$\Delta(\Lambda e \otimes_{e \Lambda e} eM)$.  That, conversely, 
$\Ker(\rho_M)$ is contained in $\Delta(\Lambda e \otimes_{e \Lambda e} eM)$ follows from the facts that 
${\bf{e}}(\rho_M): e \Lambda e \otimes_{e \Lambda e} eM \rightarrow e \bigl(\mathfrak{core}(M)\bigr) = eM$ is 
an isomorphism in $\elaemod$ and $\Delta(\Lambda e \otimes_{e \Lambda e} eM)$ is the
 largest submodule of $\Lambda e \otimes_{e \Lambda e} eM$ which is annihilated by $e$.  We conclude that the
  maps $\overline{\epsilon}_M$ are indeed isomorphisms.  
\end{proof}
  
\subsection{The Giraud subcategory of $\laMod$ corresponding to the torsion pair $(\mathcal{T}, \mathcal{F})$} \label{Gir}

We apply well-known facts about localization with respect to a hereditary torsion class $\mathcal{T}$ 
to the specialized situation where $\mathcal{T} = \mathcal{T}_e $.  We refer the reader
 to \cite{Gentle} and  \cite [Chapter IX]{STEN} for detail. 

Recall that the {\it Giraud subcategory\/} $\mathcal{G}$ of $\laMod$ relative to the hereditary torsion pair $(\mathcal{T}, \mathcal{F})$
 is a realization, inside $\laMod$, of the quotient
 category $\laMod/ \mathcal{T}$;  here we identify the torsion class $\mathcal{T}$ with  the (full) localizing subcategory of $\laMod$ that has object class $\mathcal{T}$.  In particular, $\mathcal{G}$ is a Grothendieck category.  Concretely, $\mathcal{G}$ is the full subcategory of $\laMod$ whose objects are the torsionfree $\la$-modules $F$ with the following restricted injectivity property: $\Ext_{\Lambda}^1(X, F) = 0$ for all (cyclic) torsion modules $X \in \mathcal{T}$.

It is well known that the fully faithful inclusion functor $\iota: \mathcal{G} \rightarrow \laMod$ has an exact left adjoint $\sigma: \laMod \rightarrow \mathcal{G}$, which is identifiable with the quotient functor 
$\laMod \rightarrow \laMod/ \mathcal{T}$; in
 particular, the pair $(\mathcal{G}, \sigma)$ has the universal property of such a quotient.  We will write $M_\sigma$ for $\sigma(M)$.  In parallel, $f_\sigma = \sigma(f)$ is the map $M_\sigma \rightarrow N_\sigma$ induced by $f \in \Hom_{\Lambda}(M,N)$.

To describe $M_{\sigma}$ up to isomorphism, we abbreviate $M/\Delta(M)$ by $\overline{M}$ and let $E(\overline{M})$ be an injective envelope of $\overline{M}$.  Then $M_{\sigma}$ is the preimage under the canonical map $E(\overline{M}) \rightarrow E(\overline{M}) / \overline M$ of the torsion submodule $\Delta\bigl(E(\overline{M}) / \overline M \bigr)$ of the quotient.  In particular, $\sigma$ is the identity on the objects of $\mathcal{G}$.

The explicit description of $\sigma: \laMod \rightarrow \laMod$ reveals that this functor, in turn, sees only the core of a $\la$-module M, i.e., $M_\sigma = \mathfrak{core}(M)_\sigma$, and that it preserves cores, meaning that $\mathfrak{core}(M_\sigma)$ is canonically isomorphic to $\mathfrak{core}(M)$; in fact, $M_\sigma$ is an essential extension of $\mathfrak{core} (M)$ which is maximal relative to the requirement that this core be preserved. The following alternative incarnations of the category $\mathcal{G}$ will be helpful in Sections 4 and 5.   

\begin{lema} \label{2.2} Suppose $e = e_1 + \cdots + e_m$ is a decomposition of $e$ into primitive idempotents.
\smallskip

 {\rm 1.} {\rm \cite [Proposition XI.8.6]{STEN}} The categories $\mathcal{G}$ and 
$\elaeMod$ are equivalent.  Quasi-inverse 
equivalences send $F \in \mathcal{G}$ to $e F$ in one direction, and send $X \in \elaeMod$ to $(\Lambda e \otimes_{e \Lambda e} X)_\sigma$ in the other.

The indecomposable projective objects of $\mathcal{G}$ are $(\Lambda e_i)_\sigma$ for $1 \le i \le m$, and the indecomposable injectives are $\bigl(E(S_i)\bigr)_\sigma = E(S_i)$ for $1 \le i \le m$.

\smallskip

\smallskip {\rm 2.} {\rm [9]}  The subcategory $\mathcal{C} \cap \mathcal {F}$ of
 $\laMod$ 
is in turn equivalent to $\mathcal{G}$.  Quasi-inverse equivalences send 
$M = \mathfrak{core}(M)$ to $M_\sigma$; in reverse, $F \in \mathcal{G}$ is sent to $\mathfrak{core}(F) = \nabla(F)$.  In particular, the functors $\sigma$ and $\sigma \circ \mathfrak{core}$ from 
$\laMod$ to $\laMod$ 
are naturally isomorphic. 

The indecomposable projective objects of $\mathcal{C} \cap \mathcal{F}$
 are  $\mathfrak{core}(\Lambda e_i ) = \Lambda e_i / \Delta( \Lambda e_i)$ for $1 \le i \le m$, and the indecomposable injectives are
$\mathfrak{core}\bigl(E(S_i)\bigr)) = \nabla\bigl(E(S_i) \bigr)$ for $1 \le i \le m$.
\end{lema}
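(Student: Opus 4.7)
The plan is to derive both equivalences from the adjunction of Subsection \ref{2.3} and the explicit description of $\sigma$ given above, organized around three elementary facts. First, the unit $\eta_X: X \to e(\Lambda e \otimes_{e\Lambda e} X)$ is an isomorphism for every $X \in \elaeMod$ (already noted in Subsection \ref{2.3}). Second, for every $M \in \laMod$ both the kernel and the cokernel of the counit $\epsilon_M$ are annihilated by $e$; this is because applying $\mathbf{e}$ to $\epsilon_M$ returns $\eta_{eM}^{-1}$ via the triangle identity, and the latter is an isomorphism. Third, $\sigma$ is exact, vanishes on $\mathcal{T}$, and acts as the identity on objects of $\mathcal{G}$, so it transforms any morphism with kernel and cokernel in $\mathcal{T}$ into an isomorphism.

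For part 1, the candidate quasi-inverse equivalences are $F \mapsto eF$ (the restriction of $\mathbf{e}$ to $\mathcal{G}$) and $X \mapsto (\Lambda e \otimes_{e\Lambda e} X)_\sigma$. For $F \in \mathcal{G}$, applying $\sigma$ to $\epsilon_F$ yields, by the second and third facts, a natural iso $(\Lambda e \otimes_{e\Lambda e} eF)_\sigma \cong F_\sigma = F$. Conversely, for $X \in \elaeMod$, the canonical localization map $\Lambda e \otimes_{e\Lambda e} X \to (\Lambda e \otimes_{e\Lambda e} X)_\sigma$ has torsion kernel and cokernel, so $\mathbf{e}$ turns it into an iso, and composition with $\eta_X$ produces $X \cong e\cdot(\Lambda e \otimes_{e\Lambda e} X)_\sigma$ naturally in $X$. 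The indecomposable projective $e\Lambda e$-modules are the $e\Lambda e_i$ for $1 \le i \le m$, whose image under the inverse equivalence is $(\Lambda e \otimes_{e\Lambda e} e\Lambda e_i)_\sigma \cong (\Lambda e_i)_\sigma$. For the injectives I invoke the standard fact that injectives in $\mathcal{G}$ coincide with the torsionfree indecomposable injectives in $\laMod$; these are exactly the $E(S_i)$ with $i \le m$, since $S_i \in \mathcal{F}$ iff $eS_i \neq 0$ iff $i \le m$, and $\mathcal{F}$ is closed under injective envelopes by heredity of $(\mathcal{T},\mathcal{F})$.

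For part 2, observe first that $F \in \mathcal{G} \subseteq \mathcal{F}$ forces $\Delta(\nabla F)=0$, whence $\mathfrak{core}(F)=\nabla(F) \in \mathcal{C}\cap\mathcal{F}$; in the other direction, $M_\sigma \in \mathcal{G}$ by construction. Applying the exact functor $\sigma$ to $0\to\nabla(F)\to F\to F/\nabla(F)\to 0$ and using $eF=e\nabla(F)$ (so $F/\nabla(F)\in\mathcal{T}$) yields $\nabla(F)_\sigma \cong F$. Conversely, for $M\in\mathcal{C}\cap\mathcal{F}$, the module $M_\sigma$ is torsionfree, hence $\mathfrak{core}(M_\sigma)=\nabla(M_\sigma)$; combining with core-preservation of $\sigma$, namely $\mathfrak{core}(M_\sigma)\cong \mathfrak{core}(M)=M$ (obtained by applying $\sigma$ to the canonical span $M\hookleftarrow\nabla(M)\twoheadrightarrow\mathfrak{core}(M)$, whose cokernel and kernel both lie in $\mathcal{T}$, and invoking Lemma \ref{core}), gives $\nabla(M_\sigma)\cong M$. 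The classification of projectives and injectives follows by transport: $(\Lambda e_i)_\sigma \mapsto \mathfrak{core}(\Lambda e_i)=\Lambda e_i/\Delta(\Lambda e_i)$ and $E(S_i)\mapsto \mathfrak{core}(E(S_i))=\nabla(E(S_i))$; the natural isomorphism $\sigma\cong\sigma\circ\mathfrak{core}$ has been established along the way. The one step that genuinely requires attention is the classification of indecomposable injectives in $\mathcal{G}$, since it hinges on heredity of $(\mathcal{T},\mathcal{F})$ to keep the $E(S_i)$ with $i\le m$ torsionfree; everything else is formal adjunction bookkeeping.
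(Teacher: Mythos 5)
Your proof is correct, and it is worth noting that the paper itself offers no argument for this lemma: part 1 is quoted from Stenstr\"om [Proposition XI.8.6] and part 2 from Gabriel, so the comparison here is between your self-contained derivation and an appeal to the literature. Your route is the standard localization-theoretic one, organized efficiently around three facts that the paper has already put on the table: the unit $\eta$ is invertible, the counit $\epsilon_M$ and the localization map $\mu_M$ have kernel and cokernel in $\mathcal{T}$, and the exact functor $\sigma$ kills $\mathcal{T}$ and hence inverts any such map. From these, both composites in part 1 become identities up to natural isomorphism, part 2 follows by the same mechanism applied to the span $M \hookleftarrow \nabla(M) \twoheadrightarrow \mathfrak{core}(M)$, and the lists of projectives and injectives are obtained by transport, with the only genuinely nonformal input being the (standard, correctly flagged) fact that the injective objects of $\mathcal{G}$ are the torsionfree injective $\Lambda$-modules, which pins down the $E(S_i)$ with $i \le m$ via heredity of $(\mathcal{T},\mathcal{F})$. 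What your approach buys is that the reader gets a proof entirely in the paper's own notation, using only Lemma \ref{core}, the adjunction of Subsection \ref{2.3}, and the explicit construction of $M_\sigma$, rather than having to unwind Stenstr\"om's TTF formalism.

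One presentational point, not a gap: in part 2 you justify core preservation, $\mathfrak{core}(M_\sigma)\cong\mathfrak{core}(M)$, by ``applying $\sigma$ to the span \dots and invoking Lemma \ref{core}''. Applying $\sigma$ to the span only yields $\sigma\cong\sigma\circ\mathfrak{core}$ (which you need anyway for the ``in particular'' clause); the isomorphism $\mathfrak{core}(M_\sigma)\cong\mathfrak{core}(M)$ instead comes from Lemma \ref{core} together with the observation that $\mathbf{e}(\mu_M)\colon eM \to e(M_\sigma)$ is an isomorphism because $\mu_M$ has torsion kernel and cokernel --- an observation you do make in part 1 for the module $\Lambda e\otimes_{e\Lambda e}X$ and which holds verbatim for arbitrary $M$. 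Rephrasing that parenthetical accordingly would make the argument airtight as written.
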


We add some notation:  The unit of the adjunction $(\sigma, \iota)$ is the  natural transformation 
$$\mu = (\mu_M): 1_{\laMod}\longrightarrow \iota\circ \sigma, \ \  \text{with}\ \mu_M \in \Hom_{\Lambda}( M, M_\sigma)\ \text{canonical}.$$ 
If $M$ is torsionfree, we identify $\mu_M$ with the inclusion map
 $M \hookrightarrow M_\sigma \subseteq E(M)$. 
 Clearly, $\mu_M$ is an isomorphism precisely when $M$ belongs to $\mathcal{G}$.
\smallskip

\begin{rem} \label{rem.everything restricts}
We point out that the mentioned functors linking the subcategories $\mathcal{A}$ of
 $\laMod$ $($resp.~of $\elaeMod$$)$ introduced in subsections 2.2-2.4 restrict to functors connecting the pertinent  intersections $\mathcal{A} \cap \lamod$, resp., $\mathcal{A} \cap \elaemod$ and 
$\mathcal{A} \cap \mathcal{G}$.
\end{rem}

\begin{ejem} \label{ex.first return to Ex 2.2} {\rm(Return to Example \ref{ex.delta.nabla}.)}
 In this instance, $(I_j)_\sigma =I_j$ for $j=1,2$, because $I_1$ and $I_2$ are injective objects of $\mathcal{F}$, and consequently of $\mathcal{G}$.  Now let $j \in \{3, 4\}$. In either case, $I_j/ \Delta(I_j) \cong S_2 \oplus S_2$, whence $E_j: = E( I_j/ \Delta(I_j)) \cong I_2 \oplus I_2$.  Since the torsion submodule of $(I_2 \oplus I_2) / (S_2 \oplus S_2)$ is zero, we obtain $(I_j)_\sigma = S_2  \oplus S_2$, and the map $\mu_{I_j}:I_j\longrightarrow (I_j)_\sigma=S_2\oplus S_2$ is the obvious projection.
 \end{ejem}

\subsection{The poset of essential $\Delta$-extensions of a morphism}

In constructing $\mathcal{P}^{<\infty}(\lamod)$-approximations of $\Lambda$-modules $M$
 from $\mathcal{P}^{<\infty}(e\Lambda e\text{-mod})$-approximations of $eM$, 
 passage to maximal extensions of the type described in this subsection will be crucial.  Throughout we refer to the torsion theory $(\mathcal{T}, \mathcal{F})$ introduced in 2.2.
 
\begin{defi*}
{\rm
Given a morphism $f:M\longrightarrow Y$ in {$\lamod$},
 we consider the following \emph{eligible extensions of $f$\/}:  These are the pairs $(L,g)$, where $L$ is an essential extension of $M$ with the additional property that $L/M\in\mathcal{T}$ and $g \in \Hom_ \Lambda (L, Y)$
 satisfies $g_{| M}=f$.  

We say that two eligible pairs $(L,g)$ and $(L',g')$ are \emph{isomorphic} if there is an isomorphism $\psi : L\rightarrow L'$ such that $g' \circ \psi = g$.  An \emph{essential $\Delta$-extension of $f$\/} is the isomorphism class $[(L,g)]$ of an eligible pair $(L,g)$.  

The set $\mathcal{E}_f$ of all essential $\Delta$-extensions $[(L,g)]$ of $f$ is a poset under the following partial order: $[(L,g)]\preceq [(L',g')]$ in case there is a monomorphism $\psi :L\longrightarrow L'$ such that $g' \circ \psi  =g$.
}
\end{defi*}

We comment on the legitimacy of the final definition: Welldefinedness of the relation $\preceq$ is clear.  To check that it is antisymmetric, it suffices to observe that the existence of monomorphisms $L\longrightarrow L'$ and $L' \longrightarrow L$ forces the finitely generated $\Lambda$-modules $L$ and $L'$ to have the same length, whence monomorphisms between them are isomorphisms. 

\begin{prop} \label{prop.poset of Delta-extensions}Suppose that $M$ and $Y$ are  finitely generated 
torsionfree left $\Lambda$-modules, and let $f \in \Hom_\Lambda(M, Y)$.  Then the poset $(\mathcal{E}_f,\preceq )$ has a maximum.  \\ 
\smallskip
\noindent $\bullet$ In case $Y \in \mathcal{G}$, this maximum is $[(M_\sigma ,f_\sigma{:M_\sigma\longrightarrow Y_\sigma =Y})]$.
\\
\smallskip
\noindent  $\bullet$  For a general torsionfree module $Y$, the maximum of $\mathcal{E}_f$ is $[(N,\phi)]$, where $\phi: N\longrightarrow Y$ results from the pullback of $(f_\sigma, \mu_Y)$:
$$\xymatrixrowsep{3pc}\xymatrixcolsep{4pc}\xymatrix{N \ar[r]^{\phi} \ar[d]_{\chi}  &Y \ar[d]^{\mu_Y}  \\
M_\sigma \ar[r]^{f_\sigma}  &Y_\sigma
}$$
\end{prop}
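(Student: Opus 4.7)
My strategy is to treat the special case $Y \in \mathcal{G}$ first, identifying $(M_\sigma, f_\sigma)$ as the maximum via the universal property of $M_\sigma$ as the largest essential extension of $M$ with torsion quotient inside $E(M)$. The general torsionfree case will then be reduced to the first one by means of the pullback square in the statement.

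Suppose first that $Y \in \mathcal{G}$. From the description of $\sigma$ recalled in Subsection \ref{Gir}, together with the fact that $M$ torsionfree forces $\overline{M} = M$, the module $M_\sigma$ sits inside $E(M)$ as the preimage of $\Delta(E(M)/M)$; hence $M \subseteq M_\sigma$ is an essential extension with quotient in $\mathcal{T}$, and naturality of $\mu$ combined with $\mu_Y = \mathrm{id}_Y$ yields $f_\sigma \circ \mu_M = f$. Thus $(M_\sigma, f_\sigma)$ is eligible. For an arbitrary eligible $(L,g)$, I invoke injectivity of $E(M)$ to extend the inclusion $M \hookrightarrow E(M)$ to a map $\psi : L \to E(M)$; essentiality of $M$ in $L$ forces $\psi$ to be monic, while the image $\psi(L)/M$ is torsion and therefore lies in $\Delta(E(M)/M) = M_\sigma/M$. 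Consequently $\psi$ corestricts to a monomorphism $L \hookrightarrow M_\sigma$ extending $M \hookrightarrow M_\sigma$. Finally, the map $f_\sigma \circ \psi - g : L \to Y$ vanishes on $M$ and hence factors through $L/M \in \mathcal{T}$; since $Y$ is torsionfree, this difference is zero, so $f_\sigma \circ \psi = g$.

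For a general torsionfree $Y$, the monomorphism $\mu_Y : Y \hookrightarrow Y_\sigma$ (available since $\Delta(Y) = 0$) propagates through the pullback to make $\chi : N \to M_\sigma$ monic, so I identify $N$ with a submodule of $M_\sigma$. The universal property of the pullback applied to the compatible pair $(\mu_M, f)$ furnishes a monomorphism $\iota : M \hookrightarrow N$ with $\chi \circ \iota = \mu_M$ and $\phi \circ \iota = f$; under the resulting identification $M \subseteq N \subseteq M_\sigma$, one has $N/M \hookrightarrow M_\sigma/M \in \mathcal{T}$, and essentiality of $N$ over $M$ follows from that of $M_\sigma$. Hence $(N, \phi)$ is eligible. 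Given any eligible $(L,g)$, the first case provides a monomorphism $\psi' : L \hookrightarrow M_\sigma$ extending the inclusion of $M$. The two maps $f_\sigma \circ \psi'$ and $\mu_Y \circ g$ from $L$ to the torsionfree module $Y_\sigma$ agree on $M$ and thus differ by a map out of $L/M \in \mathcal{T}$, so they coincide. The pullback then yields a unique (necessarily monic, as $\psi'$ is) morphism $\psi : L \to N$ with $\chi \circ \psi = \psi'$ and $\phi \circ \psi = g$, whence $[(L, g)] \preceq [(N, \phi)]$.

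I expect the delicate step to be the verification of the compatibility $f_\sigma \circ \psi' = \mu_Y \circ g$ that licenses the invocation of the pullback property; once this is in place, the remainder reduces to naturality of $\mu$ and the standard dichotomy between modules in $\mathcal{T}$ and torsionfree modules (in particular, modules in $\mathcal{G}$).
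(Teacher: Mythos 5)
Your proof is correct, but its logical skeleton differs from the paper's. The paper first establishes abstractly that $(\mathcal{E}_f,\preceq)$ has a maximum: chains are bounded by the length of $E(M)$, so maximal elements exist, and a directedness argument (given $[(L,g)]$ and $[(N,h)]$ with $M\subseteq L,N\subseteq M_\sigma$, the restrictions to $L\cap N$ agree by the torsion-vs-torsionfree vanishing trick, so $(L+N,\rho)$ is a common upper bound) shows any maximal element is the maximum. It then only checks that the two candidates are \emph{maximal}: for the pullback pair this is done by taking a hypothetical majorizer $(N',\phi')$, using the universal property of the pullback to produce a monomorphism $N'\to N$, and concluding by a length comparison. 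You instead prove directly that each candidate majorizes \emph{every} eligible pair: injectivity of $E(M)$ embeds any eligible $L$ into $M_\sigma$ over $M$, the same torsion/torsionfree argument forces $f_\sigma\circ\psi'=\mu_Y\circ g$, and the universal property of the pullback then maps $L$ into $N$ compatibly. This is a genuinely more economical route: it makes the directedness lemma, the appeal to bounded chain lengths, and the final length argument unnecessary, while using the same core ingredients (the description of $M_\sigma\subseteq E(M)$ with $M_\sigma/M=\Delta(E(M)/M)$, vanishing of maps from $\mathcal{T}$ to $\mathcal{F}$, naturality of $\mu$, and the pullback). What the paper's organization buys in exchange is that the existence of the maximum is settled once and for all before any candidate is examined, so each identification step reduces to a one-on-one comparison; your version must (and does) carry the full quantification over all eligible pairs through both bullet points. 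All the delicate points you flag -- in particular the compatibility $f_\sigma\circ\psi'=\mu_Y\circ g$ licensing the pullback property, and the fact that $N/M\in\mathcal{T}$ because $(\mathcal{T},\mathcal{F})$ is hereditary -- are handled correctly.
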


\begin{proof}
In light of the hypothesis $\Delta(M) = 0$, we have $ M \subseteq M_\sigma \subseteq E(M)$ for an injective envelope $E(M)$ of $M$, where the first inclusion coincides with the map $\mu_M: M \hookrightarrow M_\sigma$ 
of 2.3.  Due to the equality $M_\sigma / M = \Delta(E(M)/ M)$, we find $M_\sigma$ to be the largest among the submodules $M'$ of $E(M)$ which contain $M$ and satisfy $M'/M \in \mathcal{T}$.   Consequently, all elements of
 $\mathcal{E}_f$ are represented by pairs $(L,g)$ with $M \subseteq L \subseteq M_\sigma$, and 
we may restrict our focus to such representatives.  Conversely, whenever $h:  N \rightarrow Y$ is a homomorphism with $M \subseteq N \subseteq M_\sigma$ and $h_{|M} = f$, the pair $(N, h)$ represents a class in $\mathcal{E}_f$.

To address our first claim, we note that the lengths of increasing sequences in $\mathcal{E}_f$ are bounded from above by the composition length of $E(M)$.  In particular, each element of $\mathcal{E}_f$ is majorized by a maximal element in this set. Therefore, we only need to prove that any two elements $[(L,g)]$ and $[(N,h)]$ of
$\mathcal{E}_f$ have a common upper bound.  Consider the restrictions $g_{| X},h_{| X}:X\longrightarrow Y$ to the intersection $X = L \cap N$.  Since the difference $g_{|X}-h_{|X}$ vanishes on $M$, it induces a homomorphism $X/M\longrightarrow Y$, which in turn vanishes because $X/M\in\mathcal{T}$ and $Y\in\mathcal{F}$.  The resulting equality $g_{|X}=h_{|X}$ guarantees that the assignment $\rho: L+N\longrightarrow Y$ with $\rho (l+m)=g(l)+h(m)$ is a well-defined map in $\Hom_\Lambda(L+N, Y)$.  In light of $L + N \subseteq M_\sigma$, the pair
  $(L+N,\rho )$ represents an element in $\mathcal{E}_f$ which majorizes both 
$[(L,g)]$ and $[(N,h)]$.   

Now suppose that $Y \in \mathcal{G}$.  Then $f_\sigma:  M_\sigma \rightarrow Y_\sigma \cong Y$ gives rise to an essential $\Delta$-extension $[(M_\sigma ,f_\sigma )]$.  That this extension is maximal in $\mathcal{E}_f$ is immediate from the description of $M_\sigma$. 

To prove the final claim, we note that the map $\chi$ in the pullback diagram is an injection since $\mu_Y$ is.  Thus the submodule $ \mu_M^{-1} (\chi(N))$ of $M_\sigma$ is isomorphic to $N$.  On replacing $N$ by this copy and adjusting $\phi$ and $\chi$ accordingly, we obtain another pullback diagram for $(f_\sigma, \mu_Y) $ with $M \subseteq N \subseteq M_\sigma$, where $\chi$ is now replaced by the inclusion map. This shows that $(N, \phi)$ is an eligible pair in the sense of the above definition.
 To check maximality of $[(N, \phi)]$ in $\mathcal{E}_f$, suppose that this $\Delta$-extension is majorized by   $[(N', \phi')]$, where $M \subseteq N'  \subseteq M_\sigma$; let $\psi: N \hookrightarrow N'$ be an embedding with 
$\phi = \phi' \circ \psi$.  Since $\psi$  extends to an automorphism of 
$E(M)$, we do not lose generality in viewing $\psi$ as a set inclusion.   
We thus obtain a commutative diagram, in which $\kappa$ denotes the inclusion map $N' \hookrightarrow M_\sigma$: 
$$\xymatrixrowsep{4pc}\xymatrixcolsep{6pc}
\xymatrix{N' \ar@/^2pc/[rr]^{\phi'} \ar@{-->}@/_1pc/[r]_(0.6){v} \ar@{_(->}[dr]_{\kappa}  &N \ar@{_(->}[l]_(0.4){\psi} \ar[r]^(0.4){\phi} \ar@{^(->}[d]^{\chi}  &Y \ar@{^(->}[d]^{\mu_Y}  \\&M_\sigma \ar[r]_{f_\sigma}  &Y_\sigma}$$
\noindent  Then $f_\sigma \circ \kappa \circ \psi  = \mu_Y \circ \phi' \circ \psi$, whence the restrictions of $f_\sigma \circ \kappa$ and $\mu_Y \circ \phi'$ to the domain $N$ of $\psi$ coincide.  Since the epimorphic image $N'/N$ of $N'/M$ is a torsion module, while $Y_\sigma$ is torsionfree, the argumentation backing the first claim shows that $f_\sigma \circ \kappa = \mu_Y \circ \phi'$.  The universal property of the pullback therefore yields a map $v \in \Hom_\Lambda (N', N)$ with $\phi' = \phi \circ v$ 
and $\kappa = \chi \circ v$.  In particular, $v$ is a monomorphism.  We infer that $N$ and $N'$ have the same length and conclude that $\psi$ is an isomorphism.   This proves $[(N',\phi')] = [(N, \phi)]$ as required.
\end{proof}

\section {Iteration of strong tilting} \label{sec.stilting-iteration}

When can the process of strongly tilting $\lamod$ to a category of right modules, 
$\modlatilde$, be iterated?  In the positive case, how do the resulting sequences of strongly tilted module 
categories behave?  The main purpose of this section is to answer these questions.  More specifically, we will show 
that, whenever $\lamod$ can be strongly tilted twice, the process can be iterated arbitrarily and turns 
periodic after the initial step.  Roughly speaking: In case $\pinf(\modlatilde)$ is in turn contravariantly finite in 
$\modlatilde$, the initial transition from $\la$ to $\latilde$ increases the homological symmetry by increasing 
the number of simple modules of finite projective dimension. 
This symmetrization makes the subsequent sequence 
$\modlatilde  \rightsquigarrow  \latiltilmod  \rightsquigarrow  \cdots$
 periodic with period $2$.  

The statement of the following theorem is based on part (1) of Theorem \ref{teor.initial}.

\begin{teor}\label{teor.strong-tilting-iteration}
 Let $\la$ be a basic Artin algebra such that $\pinf(\lamod)$ is
 contravariantly finite in $\lamod$.  Moreover, let ${_{\Lambda }T}$ be 
the corresponding basic strong tilting module, and $\latilde = \End_{\Lambda}(T)^{\op}$. 
 Suppose that $\pinf(\modlatilde)$ is in turn contravariantly finite in $\modlatilde$,
 thus giving rise to a basic tilting bimodule ${_{\widetilde{\latilde}} \widetilde{T}}_{\latilde}$  which is strong in 
$\modlatilde$.  Then $\widetilde{T}$ is strong on both sides. 

In particular, the process of strongly tilting $\lamod$ then allows for unlimited iteration, yielding a sequence of basic Artin algebras $\la=  \la_0,  \la_1, \la_2, \la_3, \dots$ with the property that $\la_i$ and $\la_j$ are isomorphic whenever $i$ and $j$ are positive integers with the same parity.

Moreover, the algebras $\la_0$ and $\la_2$ are isomorphic precisely when the tilting bimodule
 $_{\la} T_{\latilde}$ is strong on both sides, i.e., when $T = \widetilde{T} $. 
\end{teor}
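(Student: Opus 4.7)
I would attack the statement in three stages: (i) establish that $_{\widetilde\latilde}\widetilde T$ is also strong; (ii) deduce unlimited iteration and period-$2$ behaviour as a formal corollary; (iii) settle the characterization $\la_0\cong\la_2$ iff $T=\widetilde T$. For Stage (i), my tool is the duality characterization of strong tilting recalled just before Theorem \ref{teor.initial}: strongness of $_\la T$ on the left gives a contravariant equivalence $\Hom_\la(-,T)\colon\pinf(\lamod)\to\mathcal X\subseteq\pinf(\modlatilde)$ onto a resolving subcategory, and strongness of $\widetilde T_\latilde$ on the right gives a contravariant equivalence $\Hom_\latilde(-,\widetilde T)\colon\pinf(\modlatilde)\to\mathcal Y\subseteq\pinf(\latiltilmod)$ onto a resolving subcategory. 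I would first observe that $\widetilde T=\Hom_\latilde(\latilde,\widetilde T)$ and $\Hom_\latilde(T,\widetilde T)$ both lie in $\mathcal Y$, since $T_\latilde\in\pinf(\modlatilde)$ is tilting. By Theorem \ref{teor.initial}(1), $_{\widetilde\latilde}\widetilde T$ will be strong provided $\pinf(\latiltilmod)$ is contravariantly finite and $\Ext^i_{\widetilde\latilde}(\pinf(\latiltilmod),\widetilde T)=0$ for $i\ge 1$; both conclusions follow at once from the equality $\mathcal Y=\pinf(\latiltilmod)$, because $\widetilde T$ is relatively Ext-injective in $\mathcal Y$ by the duality and because contravariant finiteness of $\pinf(\modlatilde)$ transfers to $\mathcal Y$ along the duality.

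The substantive step --- and the main obstacle --- is therefore to prove $\mathcal Y=\pinf(\latiltilmod)$. My plan is to pick an arbitrary $M\in\pinf(\latiltilmod)$, build an $\add({_{\widetilde\latilde}\widetilde T})$-coresolution of $M$ (which exists by general tilting theory since $_{\widetilde\latilde}\widetilde T$ is tilting), and apply $\Hom_{\widetilde\latilde}(-,\widetilde T)$ to obtain a candidate inverse image in $\modlatilde$. The key leverage comes from the strong tilting $_\la T$: the composite $\Hom_\latilde(-,\widetilde T)\circ\Hom_\la(-,T)\colon\pinf(\lamod)\to\pinf(\latiltilmod)$ is a covariant equivalence onto a subcategory of $\mathcal Y$ sending $_\la T\mapsto\widetilde T$ and $_\la\la\mapsto\Hom_\latilde(T,\widetilde T)$, producing a large supply of modules known in advance to lie in $\mathcal Y$. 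I would use this supply together with the resolving character of $\mathcal Y$ and the tilting-bimodule symmetry of $\widetilde T$ to show that every syzygy along the coresolution of $M$ lies in $\mathcal Y$; the double duality relating $\Hom_\latilde(-,\widetilde T)$ and $\Hom_{\widetilde\latilde}(-,\widetilde T)$ then places $M$ itself in $\mathcal Y$.

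Once $_{\widetilde\latilde}\widetilde T$ is shown strong, the rest is formal. By uniqueness of the basic strong tilting module (Theorem \ref{teor.initial}(1)), $_{\widetilde\latilde}\widetilde T$ is \emph{the} basic strong tilting module in $\latiltilmod$, so strongly tilting $\latiltilmod$ produces right modules over the endomorphism ring of $_{\widetilde\latilde}\widetilde T$, canonically identified with $\latilde$ by the tilting-bimodule property. Hence $\la_3\cong\la_1$, and since at each further step we are again in the ``strong on both sides'' configuration, a parity induction yields the period-$2$ conclusion. For the final characterization: if $T=\widetilde T$ then $\widetilde\latilde\cong\la$ by the tilting-bimodule property, giving $\la_2\cong\la_0$; conversely, any isomorphism $\la_0\cong\la_2$ lets us regard $\widetilde T$ as a strong tilting left $\la$-module via Stage (i), and uniqueness of the basic strong tilting module in $\lamod$ forces $\widetilde T\cong T$ as left $\la$-modules, while matching right $\latilde$-actions via the induced identification of endomorphism rings upgrades this to an isomorphism of tilting bimodules.
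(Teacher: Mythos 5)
Your Stages (ii) and (iii) are fine and essentially match the paper's (terse) deduction of the iteration, the period-$2$ statement, and the characterization $\la_0\cong\la_2\iff T=\widetilde{T}$. The gap is in Stage (i), which is the heart of the theorem. By the cited duality theorems, the image of $\Hom_{\latilde}(-,\widetilde{T})$ on $\pinf(\modlatilde)$ is exactly $\mathcal{Y}=\pinf(\latiltilmod)\cap{}^{\perp}({_{\widetilde{\latilde}}\widetilde{T}})$, so the equality $\mathcal{Y}=\pinf(\latiltilmod)$ that you set out to prove is not a reduction of the problem but a verbatim restatement of condition {\bf(4)} for $_{\widetilde{\latilde}}\widetilde{T}$, i.e., of the strongness you want. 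Your proposed proof of that equality then opens with an unjustified appeal to ``general tilting theory'': an arbitrary $M\in\pinf(\latiltilmod)$ need \emph{not} admit an exact $\add({_{\widetilde{\latilde}}\widetilde{T}})$-coresolution. Indeed, a routine dimension shift shows that any module with a finite such coresolution already lies in ${}^{\perp}({_{\widetilde{\latilde}}\widetilde{T}})$; tilting theory guarantees such coresolutions only for the regular module (axiom {\bf(3)}) and, by Auslander--Reiten/Miyashita-type results, for modules already known to lie in the orthogonal class --- exactly what is to be proved, so the step is circular. The remaining sentences (``use this supply \dots\ to show every syzygy lies in $\mathcal{Y}$'') supply no mechanism: nothing explains why the modules $\Hom_{\latilde}(\Hom_{\la}(X,T),\widetilde{T})$, $X\in\pinf(\lamod)$, control the cosyzygies of an arbitrary $M$, and in particular the hypothesis that $_{\la}T$ is strong is never brought to bear in a usable form. (A smaller slip: strongness of $_{\widetilde{\latilde}}\widetilde{T}$ is just the Ext-vanishing condition {\bf(4)}; contravariant finiteness of $\pinf(\latiltilmod)$ is a consequence via Theorem \ref{teor.initial}, not a prerequisite.)

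The missing idea is the Auslander--Green criterion the paper uses: a tilting bimodule $_AU_B$ is strong in $A\text{-mod}$ if and only if every simple right $B$-module embeds into $U_B$. Strongness of $_{\la}T$ thus says that every simple right $\latilde$-module embeds into $\soc T_{\latilde}$, and what must be shown is that every simple right $\latilde$-module embeds into $\widetilde{T}_{\latilde}$. The paper obtains this by embedding $T_{\latilde}$ into an injective right $\latilde$-module and factoring that embedding through a $\pinf(\modlatilde)$-approximation $\widetilde{\mathcal{A}}^m$ of the injective (possible precisely because $T_{\latilde}\in\pinf(\modlatilde)$); since $\add(\widetilde{\mathcal{A}})=\add(\widetilde{T}_{\latilde})$ by Theorem \ref{teor.initial}(2), the socle containment transfers from $T_{\latilde}$ to $\widetilde{T}_{\latilde}$, and the criterion applied on the other side yields strongness of $_{\widetilde{\latilde}}\widetilde{T}$. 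Without this socle-embedding device, or a genuine substitute for it, your Stage (i) does not go through.
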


\begin{Com}
 If, in the hypotheses of Theorem \ref{teor.strong-tilting-iteration}, the qualifiers ``basic" are dropped, one 
obtains Morita equivalent pairs of algebras $(\la_i, \la_{i + 2})$ for all $i \ge 1$.
\end{Com}

\begin{proof}  By a result of Auslander and 
Green (see \cite[Proposition 6.5]{AR1} and \cite[Proposition 7.1]{D-HZ} for a short argument), a tilting
 bimodule $_A U_B$ over Artin algebras $A$ and $B$ is strong in $A\text{-mod}$ if and only if all simple right $B$-modules embed into $U_B$.  In light of the hypothesis that $_{\la} T$ is
 a strong tilting module, we thus find that all simple right $\latilde$-modules are contained in
 $\soc T_{\latilde}$, up to isomorphism.  
Moreover, we deduce that
 strongness of $_{\widetilde{\latilde}} \widetilde{T}$ as
 a tilting object in 
$\latiltilmod$ will follow if we can show that all simple
 right $\latilde$-modules embed into $\widetilde{T}_{\latilde}$.  

To realize such an embedding, let $(\latilde/ \Jtilde)_{\latilde} = \widetilde{S}_1 \oplus \cdots \oplus \widetilde{S}_n$, where the $\widetilde{S}_i$ are simple. 
 Further denote by $E(\widetilde{S}_1 \oplus \cdots \oplus \widetilde{S}_n)$ an
 injective envelope, and let $\widetilde{\mathcal{A}}$ be a minimal
 $\pinf(\modlatilde)$-approximation of the latter. 
 In light of our hypothesis that $\widetilde{T}_{\latilde}$ is a 
strong tilting object in $\modlatilde$, the 
categories $\add(\widetilde{T}_{\latilde})$ and $\add(\widetilde{\mathcal{A}})$
 coincide; see Theorem \ref{teor.initial}(2).  Embed $T_{\latilde}$ into an
 injective right $\latilde$-module, say 
$\iota:  T_{\latilde} \hookrightarrow \bigoplus_{1 \le i \le n} E(\widetilde{S}_i)^{m_i}$.
  For a suitable exponent $m$, we obtain a $\pinf(\modlatilde)$-approximation
 $\phi: \widetilde{\mathcal{A}}^m \rightarrow  \bigoplus_{1 \le i \le n} E(\widetilde{S}_i)^{m_i}$.  Since $T_{\latilde}$ belongs to $\pinf(\modlatilde)$,
 the embedding $\iota$ factors through $\phi$, say $\iota = \phi \circ f$ for some 
$f \in \Hom_{\latilde}(T, \widetilde{\mathcal{A}}^m)$. 
 Then $f$ is injective, and we deduce that all simple right $\latilde$-modules embed into $\widetilde{\mathcal{A}}^m$. 
 Clearly, this implies that all simples in $\modlatilde$ embed into $\soc \widetilde{\mathcal{A}}$. 
 Invoking the fact that $\add( \widetilde{\mathcal{A}}) = \add(\widetilde{T}_{\latilde})$, we conclude that all 
$\widetilde{S}_i$ occur in $\soc \widetilde{T}_{\latilde}$ as well, 
which proves two-sided strongness of ${_{\widetilde{\latilde}} \widetilde{T} }_{\latilde}$.
 
 The final assertion follows from the preceding argument.
\end{proof}

The result is sharp in the sense that the existence of a strong tilting module in $\lamod$ 
does not, by itself, provide sufficient homological symmetry to allow for iteration of strong tilting.

\begin{ejem} \label{ejem.Igusa-Smalo-Todorov}
This example is due to Igusa-Smal\o-Todorov \cite{Igusa}.  Let $\la = KQ/I$, where $Q$ is the quiver
$$\xymatrixcolsep{5pc}
\xymatrix{
1 \ar@/^1.75pc/[r]^{\gamma}  &2 \ar[l]_{\beta} \ar@/^1pc/[l]^{\alpha}
}$$
\noindent and $I \subseteq KQ$ is the ideal generated by $\beta \gamma, \gamma \alpha, \gamma \beta$.  Clearly the right socle of $\la$ contains both simple right $\la$-modules, whence 
${\rm {l.findim }}(\la) = 0$
 (see \cite{Bass}).  
Hence the basic strong tilting object in 
$\lamod$ is $T ={ {}_\Lambda\Lambda}$, and
 $\latilde$ is isomorphic to $\la$.  However, the category 
$\pinf(\modlatilde) = \pinf(\modla)$ 
fails 
to be contravariantly finite in $\modla$ (see \cite{Igusa}). 
 \end{ejem}

For a first application, we combine Theorem \ref{teor.strong-tilting-iteration} with the fact that strong tilting modules induce contravariant equivalences of categories of modules of finite projective dimension 
(see \cite [Reference Theorem III and Theorem 1]{Birgedualities} 
or \cite[Theorems 7,8]{HuisgenZimmermann-Saorin}).  This yields

\begin{cor} \label{cor.dualities-by-strongtilting}
Retain the notation and hypotheses of {\rm Theorem \ref{teor.strong-tilting-iteration}}.  
Then the pairs of functors $\bigl(\Hom_{\la}(-, T)$, $\Hom_{\latilde}(-, T)\bigr)$ and
 $\bigl(\Hom_{\latilde}( -,  \widetilde{T} ), \Hom_{\widetilde{\latilde}}( - , \widetilde{T})\bigr)$ induce dualities 
$$\pinf(\lamod)\  \longleftrightarrow\  \pinf(\modlatilde) \cap {^\perp(T_{\latilde}}) \ \ \ \subseteq  \ \ \ \pinf(\modlatilde)\  
\longleftrightarrow\  \pinf(\widetilde{\latilde}\text{-}\mod),$$
where ${^\perp(T_{\latilde})}$ is the full subcategory of $\modlatilde$
 whose objects are the modules $\widetilde{M}$ with 
$\Ext^i_{\latilde} (\widetilde{M}, T) = 0$ for $i \ge 1$.  Moreover, 
${^\perp(T_{\latilde})}$ contains $\pinf(\modlatilde)$ if and only if 
$_{\la} T _{\latilde}$ is two-sided strong, i.e., precisely when $T \cong \widetilde{T}$.
\end{cor}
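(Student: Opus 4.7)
The plan is to derive both dualities by a direct application of the reference theorem cited in the statement (\cite[Reference Theorem III and Theorem 1]{Birgedualities}, equivalently \cite[Theorems 7, 8]{HuisgenZimmermann-Saorin}), which asserts that any basic tilting bimodule ${}_A U_B$ with ${}_A U$ strong induces, via the contravariant $\Hom$-pair $\bigl(\Hom_A(-,U), \Hom_B(-,U)\bigr)$, a duality between $\pinf(A\text{-mod})$ and the resolving subcategory $\pinf(\mbox{\rm mod-}B) \cap {}^\perp(U_B)$ of $\pinf(\mbox{\rm mod-}B)$. The corollary will fall out of feeding the two available tilting bimodules ${}_\la T_{\latilde}$ and ${}_{\widetilde{\latilde}} \widetilde{T}_{\latilde}$ into this machine, with the extra strongness data supplied by Theorem \ref{teor.strong-tilting-iteration}.

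For the first duality, I specialize the cited theorem to $(A,B) = (\la, \latilde)$ with $U = T$. Since ${}_\la T$ is strong by hypothesis, the output is precisely the announced duality $\pinf(\lamod) \leftrightarrow \pinf(\modlatilde) \cap {}^\perp(T_{\latilde})$ implemented by $\bigl(\Hom_\la(-,T), \Hom_{\latilde}(-,T)\bigr)$; no further argument is required here.

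For the second duality, I apply the same theorem to $(A,B) = (\latilde, \widetilde{\latilde})$ with $U = \widetilde{T}$, now viewing the tilting bimodule on its right side: $\widetilde{T}_{\latilde}$ is strong in $\modlatilde$ by construction, so the theorem yields a duality $\pinf(\modlatilde) \leftrightarrow \pinf(\latiltilmod) \cap {}^\perp({}_{\widetilde{\latilde}} \widetilde{T})$ implemented by $\bigl(\Hom_{\latilde}(-,\widetilde{T}), \Hom_{\widetilde{\latilde}}(-,\widetilde{T})\bigr)$. The additional input from Theorem \ref{teor.strong-tilting-iteration} is that ${}_{\widetilde{\latilde}} \widetilde{T}$ is strong as well, which by definition of strongness gives $\pinf(\latiltilmod) \subseteq {}^\perp({}_{\widetilde{\latilde}} \widetilde{T})$; the intersection therefore collapses to $\pinf(\latiltilmod)$, producing the claimed full duality.

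Finally, the inclusion $\pinf(\modlatilde) \subseteq {}^\perp(T_{\latilde})$ is, unfolding the definition of ${}^\perp(T_{\latilde})$, precisely the statement that $T_{\latilde}$ is a strong tilting right $\latilde$-module. Combined with the standing left-strongness of ${}_\la T$, it is therefore equivalent to two-sided strongness of ${}_\la T_{\latilde}$. The equivalence with $T \cong \widetilde{T}$ then follows from the uniqueness of the basic strong tilting module in $\modlatilde$ (Theorem \ref{teor.initial}(1)): both $T_{\latilde}$ (basic as a right $\latilde$-module because ${}_\la T$ is basic and $\latilde = \End_\la(T)^{\op}$) and $\widetilde{T}_{\latilde}$ would qualify as basic strong tilting objects in $\modlatilde$, forcing them to be isomorphic. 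The converse is handed down directly by Theorem \ref{teor.strong-tilting-iteration}, which supplies two-sided strongness of $\widetilde{T}$. No step here is technically difficult; the main care needed is the bookkeeping that matches the perpendicular and strongness conditions to the correct side of each bimodule.
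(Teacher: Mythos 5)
Your proposal is correct and follows essentially the same route as the paper, which likewise just feeds the two tilting bimodules into the duality theorem of \cite{Birgedualities}/\cite{HuisgenZimmermann-Saorin} and uses the two-sided strongness of $\widetilde{T}$ from Theorem \ref{teor.strong-tilting-iteration} to collapse the second intersection, with the final equivalence handled via uniqueness of the basic strong tilting object. (Only cosmetic remark: in the second application the bimodule is ${}_{\widetilde{\latilde}}\widetilde{T}_{\latilde}$, so the roles are $(A,B)=(\widetilde{\latilde},\latilde)$ in the right-hand version of the cited theorem, which is what your argument actually uses.)
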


In case the process of strongly tilting ${\lamod}$ permits for iteration, periodicity of the sequence 
${\lamod}  \rightsquigarrow \modlatilde \rightsquigarrow \cdots$ may set in with delay.   In fact, if $\la$ is 
a truncated path algebra one always obtains a sequence of basic strong tilts, 
${\lamod}  \rightsquigarrow 
\modlatilde  \rightsquigarrow \cdots$ (see \cite [Theorem 19]{Birgedualities});  
for this class of algebras, 
$\lamod \approx \widetilde{\latilde} \text{-mod}$ 
if and only if $Q$ does not have a precyclic source (see \cite [Corollary 7.2]{D-HZ}).   We refer to \cite{Birgedualities} and \cite{HuisgenZimmermann-Saorin}  for examples based on quivers with precyclic sources where the transition from the tilting module $T_{\latilde}$ to the basic strong tilting module $\widetilde{T}_{\latilde}$ in $\modlatilde$ is displayed.  

We conclude the section with a variant of the criterion of Auslander and Green for  a strong tilting module $T \in \lamod$ to be strong also as a tilting module over $\End_{\la}(T)^{\op}$; it was implicitly used in the proof of Theorem \ref{teor.strong-tilting-iteration}.  The upcoming version of the criterion does not rely on structural information regarding $_{\la} T$ and provides additional motivation for the detection of twosided strongness.

\begin{Obs} \label{OBS3.5} Suppose $\la$ is an Artin algebra such that 
$\pinf(\lamod)$ is contravariantly finite in $\lamod$.
  Then the following statements are equivalent:
\smallskip

\noindent {\bf (1)}  The basic strong tilting module $T \in \lamod$ is strong also in
 $\modlatilde$, where $\latilde = \End_{\la}(T)^{\op}$.
\smallskip

 \noindent {\bf (2)}  Every simple left $\la$-module of infinite projective dimension embeds into a $\la$-module of finite projective dimension.
 \smallskip
 
 \noindent{\bf(3)}  There exists a duality  
$\pinf (\lamod) \longleftrightarrow \pinf(\modlatilde)$.
\smallskip\vskip.03truein 
 
\noindent If {\rm (1)-(3)} are satisfied, then $\Hom_{\la}(-, T)$ and $\Hom_{\latilde}(-, T)$ induce  $(${unique up to isomorphism}$)$ quasi-inverse dualities 
 $\pinf (\lamod) \  \longleftrightarrow\  \pinf(\modlatilde)$.  
 \end{Obs}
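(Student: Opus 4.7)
The plan is to close the cycle $(1) \Rightarrow (3) \Rightarrow (2) \Rightarrow (1)$ and to derive the final claim from the implication $(1) \Rightarrow (3)$. For $(1) \Leftrightarrow (2)$ I would invoke the Auslander-Green criterion used in the proof of Theorem \ref{teor.strong-tilting-iteration}, read with left and right interchanged: the tilting bimodule ${}_{\la} T_{\latilde}$ is strong in $\modlatilde$ if and only if every simple left $\la$-module embeds into ${}_{\la}T$. Since $T \in \pinf(\lamod)$, this immediately yields $(1) \Rightarrow (2)$. For $(2) \Rightarrow (1)$ I would recall the standard fact that a strong tilting module is an $\add(T)$-cogenerator of $\pinf(\lamod)$ (cf.\ \cite{Birgedualities, HuisgenZimmermann-Saorin}): every $M \in \pinf(\lamod)$ fits into a short exact sequence $0 \to M \to T' \to M' \to 0$ with $T' \in \add(T)$ and $M' \in \pinf(\lamod)$. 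Hence every simple $S$ embedded into some $M \in \pinf(\lamod)$ embeds into a module of $\add(T)$ and, being simple, into a single indecomposable summand of $T$.

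The implication $(1) \Rightarrow (3)$ is a direct consequence of Corollary \ref{cor.dualities-by-strongtilting}: under (1), the subcategory ${}^\perp(T_{\latilde})$ of $\modlatilde$ contains $\pinf(\modlatilde)$, so $\bigl(\Hom_{\la}(-, T), \Hom_{\latilde}(-, T)\bigr)$ furnishes quasi-inverse dualities between $\pinf(\lamod)$ and the full category $\pinf(\modlatilde)$. The critical step is $(3) \Rightarrow (1)$: given an abstract duality $D : \pinf(\lamod) \leftrightarrow \pinf(\modlatilde)$, I would produce $U = D({}_{\la}\la) \in \pinf(\modlatilde)$; since $\End_{\latilde}(U) \cong \End_{\la}(\la)^{\op} \cong \la$, this endows $U$ with the structure of a $\la$-$\latilde$-bimodule. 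A representability argument for contravariant $K$-linear equivalences between resolving subcategories of finitely generated modules (underlying the Morita-type dualities developed in \cite{Birgedualities, HuisgenZimmermann-Saorin}) then identifies $D$ up to natural isomorphism with $\Hom_{\la}(-, U)$. Tracking $\Ext$-vanishing through this identification establishes that ${}_{\la}U$ is a strong tilting module, while surjectivity of $D$ onto all of $\pinf(\modlatilde)$ forces $U_{\latilde}$ to be strong in $\modlatilde$. Invoking the uniqueness clause of Theorem \ref{teor.initial}(1), ${}_{\la}U \cong {}_{\la}T$, so $T$ itself is strong as a right $\latilde$-module, i.e., (1) holds.

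The main obstacle lies in the representability step of $(3) \Rightarrow (1)$: converting an abstract contravariant equivalence into a bimodule-induced $\Hom$-duality. Once (1) is secured, the final assertion follows from Corollary \ref{cor.dualities-by-strongtilting}, which supplies the pair $\bigl(\Hom_{\la}(-, T), \Hom_{\latilde}(-, T)\bigr)$ as quasi-inverse dualities; their uniqueness up to isomorphism reflects the uniqueness of the basic strong tilting module $T$ guaranteed by Theorem \ref{teor.initial}(1).
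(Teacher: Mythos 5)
Your proof is correct in substance and follows essentially the same skeleton as the paper's: the equivalence $(1) \Leftrightarrow (2)$ rests on the Auslander--Green criterion, and $(1) \Leftrightarrow (3)$ plus the supplementary uniqueness clause are handled by appealing to the duality theory of \cite{Birgedualities} and \cite{HuisgenZimmermann-Saorin}. The one genuine variation is in $(2) \Rightarrow (1)$: you invoke the relative Ext-injective cogenerator property of $T$ in $\pinf(\lamod)$ (the existence of exact sequences $0 \to M \to T' \to M' \to 0$ with $T' \in \add(T)$ and $M' \in \pinf(\lamod)$), while the paper argues directly by embedding $M(S)$ into an injective module $E$, taking the minimal $\pinf(\lamod)$-approximation $\phi: \mathcal{A} \to E$, and factoring the inclusion through $\phi$ to obtain $S \hookrightarrow \mathcal{A}$ with $\add(\mathcal{A}) = \add(T)$. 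Your route is cleaner to state but imports the cogenerator fact as a black box; the paper's route is self-contained and in fact re-derives the relevant piece of that fact on the spot. Either is acceptable.

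For $(3) \Rightarrow (1)$ and for the final uniqueness statement, the paper simply cites \cite{Birgedualities} and \cite[Corollary~9]{HuisgenZimmermann-Saorin}, whereas you sketch a representability argument (set $U = D({_\la\la})$, observe $\End_{\latilde}(U) \cong \la$, identify $D$ with $\Hom_\la(-,U)$). This is morally what lies behind those citations, and you correctly flag representability as the nontrivial hinge. Be aware, though, that the uniqueness-up-to-isomorphism of the duality pair does \emph{not} follow merely from uniqueness of the basic strong tilting module $T$ (Theorem~\ref{teor.initial}(1)): one first needs the representability step to know that \emph{every} duality is induced by a bimodule; only then does uniqueness of the bimodule yield uniqueness of the functors. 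The paper's citation of \cite[Corollary~9]{HuisgenZimmermann-Saorin} covers exactly this, so your argument should lean on that reference rather than on Theorem~\ref{teor.initial}(1) alone for the concluding sentence.
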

 
 \begin{proof}
  ``(1)$\implies$(2)" is immediate from the criterion of Auslander and Green quoted in the proof of Theorem \ref{teor.strong-tilting-iteration}.  ``(2)$\implies$(1)":   By (2), every simple left $\la$-module $S$ embeds into a module $M = M(S)$ of finite projective dimension.  Consider an embedding $\iota:  M \hookrightarrow E$ of $M$ into an injective module $E$, and let  $\phi: \mathcal{A} \rightarrow E$  be a minimal $\pinf(\lamod)$-approximation.  Then $\iota$ factors through $\phi$, which shows $M$ to be contained in $\mathcal{A}$ up to isomorphism.  Hence so is $S$.  Since $\add(\mathcal{A}) \subseteq \add(T)$ by hypothesis (cf. the proof of Theorem \ref{teor.strong-tilting-iteration}), we conclude that $S$ embeds into $T$. Thus an application of \cite[Proposition 6.5]{AR1} yields (1).
 
The equivalence ``(1)$\iff$(3)" follows from  \cite[Reference Theorem III and Theorem 1]{Birgedualities} or \cite[Theorems 7,8]{HuisgenZimmermann-Saorin}.  For the supplementary statement, see \cite[Corollary 9]{HuisgenZimmermann-Saorin}.
 \end{proof}

\section{Testing for contravariant finiteness of the category  $ \mathcal{P}^{<\infty}(\lamod)$}

Our main objective in this section is to show that, in exploring whether the subcategory $\mathcal{P}^{<\infty}(\lamod)$  is contravariantly finite in $\lamod$, a comparatively mild hypothesis (\ref{condition4.1}(ii) below) permits us to eliminate primitive idempotents which correspond to simple left $\la$-modules of finite projective dimension.  This reduces the problem to a more manageable corner $e \Lambda e$ of $\la$.

\subsection{The setting}\label{condition4.1}

\begin{setting}[Blanket hypotheses] \label{set.initial setting} 
Throughout this section, we will assume that  the idempotent $e \in \Lambda$ satisfies the following two conditions:\\
 {\bf (i)} The semisimple left $\Lambda$-module $\Lambda (1-e)/J(1-e)$ has finite projective dimension.\\
 {\bf (ii)} $e\Lambda (1 - e)$ has finite projective dimension as a left $e\Lambda e$-module. 
\end{setting}

\begin{rems} \label{rems.about-Setting 4.1}
 $\bullet$ Suppose that $e = e_1 + \cdots + e_m$ and $1 - e = e_{m+1} + \cdots + e_n$, where the $e_i$ are orthogonal primitive idempotents of $\la$.  Then the conditions under \ref{condition4.1} amount to finiteness of the projective dimensions of the simple $\la$-modules $S_i = \Lambda e_i/ J e_i$ for $ i \geq m+1$ and those of the
 $e \Lambda e$-modules $e \Lambda e_i$ for  $i \geq m+1$.  In particular,   \ref{condition4.1}(i) implies that, 
for all $M \in \lamod$, 
\vskip0.05truein

\centerline{$\pdim_{\la} M < \infty \iff \pdim_{\la} \mathfrak{core}(M) < \infty \iff \pdim_{\la} M_\sigma < \infty $.}
\vskip0.05truein

\noindent With regard to \ref{condition4.1}(ii), we mention that, in case $\la$ is a path algebra modulo relations, the quiver and relations of $e \Lambda e$ are available from those of $\la$, making this condition computationally accessible. 
\smallskip

\noindent $\bullet$  Suppose that $S_1, \dots, S_m$ are precisely the simple left $\la$-modules of infinite projective dimension; in particular, this means that \ref{condition4.1}(i) is satisfied.  Even over monomial algebras, condition \ref{condition4.1}(ii) is not automatic in this situation, as is witnessed by an example of Fuller and Saor\'\i n \cite [ Example 4.2]{FS}.  However, if all of the supplementary simples $S_{m+1}, \dots, S_n$ have projective dimension at most $1$, then \ref{condition4.1}(ii) does always hold in this scenario [loc.cit.].    \end{rems}
 

In the present setting, the adjoint pair $(\Lambda e \otimes_{e \Lambda e} -,\ {\bf{e}})$ is particularly
useful towards the transfer of homological information between the categories $\lamod$ and $\elaemod$.  Namely:

\begin{lema} \label{lem.preserving-reflecting finprojdim}
Suppose $\Lambda$ satisfies the conditions under \ref{set.initial setting}.  Then the
 functors ${\bf{e}}: \lamod \rightarrow e \Lambda  e\text{-mod}$ and its left adjoint 
$\Lambda e \otimes_{e \Lambda e} - : \elaemod \rightarrow  \lamod$ preserve and reflect finite projective dimension. 
\end{lema}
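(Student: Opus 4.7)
The plan is to treat the four implications separately. I pair the preserve direction for $\mathbf{e}$ with the exactness of $\mathbf{e}$, and the preserve direction for $\la e\otimes_{e\la e}-$ with an induction on projective dimension; the two reflect implications will then fall out by short adjunction arguments, using the unit $\eta$ and the counit $\epsilon$ recalled in Subsection \ref{2.3}.

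First I would verify that $\mathbf{e}$ preserves finite projective dimension. Since $\mathbf{e}$ is exact, any finite projective resolution $P_\bullet\to M$ in $\lamod$ yields a finite exact sequence $eP_\bullet\to eM$ in \elaemod. Each $eP$ with $P\in\lamod$ projective is a summand of some $e\la^k=(e\la e)^k\oplus\bigl(e\la(1-e)\bigr)^k$ viewed as a left $e\la e$-module, and by hypothesis \ref{condition4.1}(ii) each such summand has finite projective dimension over $e\la e$. A standard splicing of short exact sequences then yields $\pdim_{e\la e}eM<\infty$.

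The main obstacle is the preserve direction for $\la e\otimes_{e\la e}-$, because this functor is only right exact: without knowing that $\la e$ is flat over $e\la e$, one cannot simply apply it to a finite projective resolution. I would induct on $\pdim_{e\la e}X$. The base case is immediate, as $\la e\otimes_{e\la e}-$ sends projectives to projectives. For the step, choose $0\to K\to P\to X\to 0$ in \elaemod with $P$ projective and $\pdim_{e\la e}K$ smaller. Applying $\la e\otimes_{e\la e}-$ yields the four-term exact sequence
\[ 0\to\operatorname{Tor}_1^{e\la e}(\la e,X)\to\la e\otimes_{e\la e} K\to\la e\otimes_{e\la e} P\to\la e\otimes_{e\la e} X\to 0. \]
Here $\la e\otimes_{e\la e} P$ is $\la$-projective, $\la e\otimes_{e\la e} K$ has finite $\la$-pdim by the inductive hypothesis, and $e\cdot\operatorname{Tor}_1^{e\la e}(\la e,X)=\operatorname{Tor}_1^{e\la e}(e\la e,X)=0$, so the $\operatorname{Tor}$ term belongs to $\mathcal{T}$ and therefore has finite $\la$-pdim by hypothesis \ref{condition4.1}(i) together with the observation in Remark \ref{rems.about-Setting 4.1}. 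Splicing this sequence into two short exact sequences then gives $\pdim_\la(\la e\otimes_{e\la e} X)<\infty$.

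The two reflect implications now follow by brief adjunction arguments. Since the unit $\eta:X\to e(\la e\otimes_{e\la e}X)$ is an isomorphism, applying the preserve direction for $\mathbf{e}$ to $M:=\la e\otimes_{e\la e} X$ converts finite $\la$-pdim of $\la e\otimes_{e\la e} X$ into finite $e\la e$-pdim of $X$. Conversely, given $\pdim_{e\la e}eM<\infty$, the preserve direction for $\la e\otimes_{e\la e}-$ yields $\pdim_\la(\la e\otimes_{e\la e} eM)<\infty$; since $\mathbf{e}(\epsilon_M)$ is an isomorphism, the kernel of $\epsilon_M:\la e\otimes_{e\la e} eM\to\nabla(M)$ and the cokernel $M/\nabla(M)$ both lie in $\mathcal{T}$, and both therefore have finite $\la$-pdim by (i). The pair of short exact sequences
\[ 0\to\Ker\epsilon_M\to\la e\otimes_{e\la e} eM\to\nabla(M)\to 0,\qquad 0\to\nabla(M)\to M\to M/\nabla(M)\to 0 \]
then forces $\pdim_\la M<\infty$, completing the argument.
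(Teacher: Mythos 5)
Your proof is correct and follows the same overall skeleton as the paper's (preservation for $\mathbf{e}$, then preservation for $\Lambda e\otimes_{e\Lambda e}-$, then both reflect-implications via the unit/counit), but the three substantive steps are implemented differently. For $\mathbf{e}$ preserving finite projective dimension, the paper simply cites \cite[Lemma 1.2(b)]{FS}, whereas you give a self-contained argument from the decomposition $e\Lambda \cong e\Lambda e \oplus e\Lambda(1-e)$ and hypothesis \ref{condition4.1}(ii); this is a welcome gain in transparency. For $\Lambda e\otimes_{e\Lambda e}-$ preserving, the paper applies the functor to an entire finite projective resolution at once, observes that the resulting complex has projective terms and $\mathcal{T}$-torsion homology, and dimension-shifts from the top; your argument inducts on $\pdim_{e\Lambda e}X$ and uses the four-term $\operatorname{Tor}$ sequence, with the key observation that $e\cdot\operatorname{Tor}_1^{e\Lambda e}(\Lambda e, X)=0$ (since $\mathbf{e}$ is exact and $\mathbf{e}(\Lambda e\otimes Q_\bullet)\cong Q_\bullet$) — this is the same underlying mechanism packaged differently, and both are equally valid. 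Finally, for $\mathbf{e}$ reflecting, the paper routes through Lemma \ref{core} and $\mathfrak{core}(M)$, while you work directly with $\epsilon_M$ and the two short exact sequences flanking $\nabla(M)$; these encode the same information, but your phrasing avoids an explicit appeal to the core functor. In short: a correct proof that trades the paper's citation and complex-wide homology argument for self-contained, textbook-style $\operatorname{Tor}$ and splicing arguments.
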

\begin{proof}
By \cite[Lemma 1.2(b)]{FS}, the functor ${\bf{e}}:\lamod \rightarrow e\Lambda e\text{-mod}$ preserves finite projective dimension.  In light of the canonical isomorphism 
${\bf{e}} \circ(\Lambda e \otimes_{e \Lambda e} -) \cong \id_{\elaemod}$ 
this shows that $\Lambda e \otimes_{e \Lambda e} - $ reflects finite projective dimension.

To see that $\Lambda e \otimes_{e \Lambda e} - $ preserves finite projective dimension,  let $X \in \pinf(\elaemod)$, and let

\vskip0.05truein

\centerline{$\XX:\ \   0 \rightarrow\, Q_m\, {\overset{f_m} \longrightarrow}\, \cdots\, {\overset{f_2} \longrightarrow}\; Q_1\, {\overset{f_1} \longrightarrow}\, Q_0\,  {\overset{f_0} \longrightarrow}\,  X\, \rightarrow 0$}
\vskip0.05truein
\noindent be a 
projective resolution in $\elaemod$.  Then all terms $\Lambda e \otimes_{e \Lambda e} Q_j$ of the complex $\Lambda e 
\otimes_{e\Lambda e} \XX$ are projective over $\la$; indeed, the $Q_j$ belong to $\add(_{e \Lambda e} e \Lambda e)$, whence the 
modules $\Lambda e \otimes_{e \Lambda e} Q_j$ belong to the category $\add(\Lambda e \otimes_{e \Lambda e} e\Lambda e)= \add({_{\Lambda} \Lambda e})$.  
Moreover, all homology modules of $\Lambda e \otimes_{e\Lambda e} \XX$ have finite projective dimension in $ \lamod$; indeed, due to the natural isomorphism ${\bf{e}} (\Lambda e \otimes_{e\Lambda e} \XX)\, \cong\,  \XX$, they are all 
annihilated by ${\bf{e}} $, whence their simple composition factors are direct summands of $\la(1 - e)/ J(1 - e)$.   Writing $(F_i )$ for the differential $(\Lambda e \otimes_{e \Lambda e} f_i)$ of $\Lambda e \otimes_{e\Lambda e} \XX$, we
 find that $\Ker(F_m)$ has finite projective dimension.  In view of projectivity of $\Lambda e \otimes_{e\Lambda e} Q_m$, so does $\Im(F_m)$, whence we obtain finiteness of $\pdim_{\Lambda} \Ker(F_{m-1})$ from that of
 $\pdim_{\Lambda} \Ker(F_{m-1})/ \Im (F_m)$.  An obvious induction thus shows that $\Im(F_0) \cong \Lambda \otimes_{e \Lambda e} X$ has finite projective dimension over $\Lambda$. 

That the functor ${\bf{e}} $ reflects finite projective dimension is now seen as follows:  Given $M \in  \lamod$ such that ${\bf{e}} (M)$ has finite projective dimension over $e \Lambda e$, we apply the conclusion of the preceding paragraph and Lemma \ref{core} to deduce that $\pdim_{\Lambda} \mathfrak{core}(M) < \infty$.  Once more invoking  \ref{condition4.1}(i), we conclude that $\pdim_{\Lambda} M < \infty$.  
\end{proof}

The following consequences of Lemma \ref{lem.preserving-reflecting finprojdim}
 will facilitate applications of the main result of this section.

First we combine Lemma \ref{lem.preserving-reflecting finprojdim}
 with the equivalences $e \Lambda e\text{-Mod} \approx \mathcal{G} \approx {\mathcal{C}\cap\mathcal{F}}$ of 
Lemma \ref{2.2}, to find that finiteness of projective dimensions transfers smoothly among the categories singled out in Section \ref{Gir}.  Namely:  If $M_\sigma \in \laMod$ is an object of the Giraud subcategory $\mathcal{G}$ of $\laMod$, 
then $M_\sigma$ has finite projective dimension in $\mathcal{G}$ if and only if 
$\pdim_{e \Lambda e} e M_\sigma < \infty$,
 if and only if $\pdim_{\Lambda} M_\sigma < \infty$.  Analogously: If $M = \mathfrak{core}(M)$ is an object of 
 ${\mathcal{C}\cap\mathcal{F}}$, then $M$ has finite projective dimension in this subcategory if and only 
if $\pdim_{\Lambda} M  < \infty$.   This leads to

\begin{lema}\label{Lemma 4.4}
Let $M \in \lamod$, suppose that $eM$ has a $\pinf(\elaemod)$-approximation, 
and let  $q:  X \rightarrow eM$ be a minimal such approximation. 
 Then $(\Lambda e \otimes_{e \Lambda e} q)_\sigma: (\Lambda e \otimes_{e \Lambda e} X)_\sigma \rightarrow (\Lambda e \otimes_{e \Lambda e} eM)_\sigma \cong M_\sigma$ is a minimal $\pinf(\lamod)$-approximation of $M_\sigma$.
\end{lema}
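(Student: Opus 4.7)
My plan is to establish three things in turn: that $(\Lambda e\otimes_{e\Lambda e} X)_\sigma$ lies in $\pinf(\lamod)$; that $(\Lambda e\otimes_{e\Lambda e} q)_\sigma$ enjoys the factoring property of a $\pinf(\lamod)$-approximation of $M_\sigma$; and that this map is right minimal. The engine throughout is the equivalence of Lemma \ref{2.2}(1) between $\mathcal{G}$ and $\elaeMod$, with the functor $\mathbf{e}$ as one quasi-inverse and $(\Lambda e\otimes_{e\Lambda e}-)_\sigma$ as the other, paired with Lemma \ref{lem.preserving-reflecting finprojdim} which moves finite projective dimensions back and forth across the adjoint pair.

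The first point is nearly automatic. Since $q$ is an approximation, $X\in\pinf(\elaemod)$, so Lemma \ref{lem.preserving-reflecting finprojdim} places $\Lambda e\otimes_{e\Lambda e} X$ in $\pinf(\lamod)$; the first bulleted remark in \ref{rems.about-Setting 4.1} (a consequence of \ref{set.initial setting}(i)) then yields $(\Lambda e\otimes_{e\Lambda e} X)_\sigma\in\pinf(\lamod)$ as well. The isomorphism $(\Lambda e\otimes_{e\Lambda e} eM)_\sigma\cong M_\sigma$ asserted in the statement is precisely the unit identity of the equivalence of Lemma \ref{2.2}(1), applied to the object $M_\sigma\in\mathcal{G}$ (for which $eM_\sigma = eM$).

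For the approximation property, given $\phi\in\Hom_\Lambda(P,M_\sigma)$ with $P\in\pinf(\lamod)$, I would first push to $\elaemod$ by applying $\mathbf{e}$; Lemma \ref{lem.preserving-reflecting finprojdim} ensures $eP\in\pinf(\elaemod)$, so the approximation property of $q$ delivers $g\in\Hom_{e\Lambda e}(eP,X)$ with $e\phi=q\circ g$. I would then transport this factorization back across the equivalence by applying $(\Lambda e\otimes_{e\Lambda e}-)_\sigma$ and composing with $\mu_P:P\to P_\sigma$ on one side and the canonical iso $(\Lambda e\otimes_{e\Lambda e}eM_\sigma)_\sigma\cong M_\sigma$ on the other, producing a factorization $\phi=(\Lambda e\otimes_{e\Lambda e} q)_\sigma\circ h$ for some $h\in\Hom_\Lambda\bigl(P,(\Lambda e\otimes_{e\Lambda e} X)_\sigma\bigr)$. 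This is the step demanding most care: one must verify, using naturality of $\mu$ and of the counit $\epsilon$, that the composite $(\Lambda e\otimes_{e\Lambda e}\mathbf{e}(-))_\sigma$ applied to $\phi$ coincides, modulo these canonical isos, with $\phi_\sigma:P_\sigma\to M_\sigma$ precomposed with $\mu_P$.

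Minimality is then the cleanest part. Suppose $u\in\End_\Lambda\bigl((\Lambda e\otimes_{e\Lambda e} X)_\sigma\bigr)$ satisfies $(\Lambda e\otimes_{e\Lambda e} q)_\sigma\circ u=(\Lambda e\otimes_{e\Lambda e} q)_\sigma$. Applying $\mathbf{e}$ and using that, under the equivalence of Lemma \ref{2.2}(1), $\mathbf{e}$ identifies $(\Lambda e\otimes_{e\Lambda e} q)_\sigma$ with $q$ via the unit $\eta$ of the adjunction in \ref{2.3}, we obtain $q\circ\mathbf{e}(u)=q$ in $\elaemod$. Minimality of $q$ forces $\mathbf{e}(u)$ to be an automorphism of $X$, and since the restriction of $\mathbf{e}$ to $\mathcal{G}$ is an equivalence of categories, it reflects isomorphisms; hence $u$ is an automorphism.
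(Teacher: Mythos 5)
Your proposal is correct, and all three steps hold up: the domain lies in $\pinf(\lamod)$ by Lemma \ref{lem.preserving-reflecting finprojdim} together with condition \ref{set.initial setting}(i); the transport of the factorization $e\phi = q\circ g$ back to $\lamod$ does work, since by naturality of the counit $\epsilon$ the map $\sigma(\epsilon_N)\colon (\Lambda e\otimes_{e\Lambda e}eN)_\sigma\to N_\sigma$ is a natural isomorphism (its kernel and cokernel are torsion), so that $(\Lambda e\otimes_{e\Lambda e}e\phi)_\sigma$ is identified with $\phi_\sigma$ and precomposition with $\mu_P$ recovers $\phi$ because $\mu_{M_\sigma}$ is an isomorphism; and your minimality argument via $\mathbf{e}$ reflecting isomorphisms on $\mathcal{G}$ is sound. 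Where you diverge from the paper is in the approximation property. The paper stays inside $\lamod$: it reduces a test map $f\colon U\to M_\sigma$ to the case of torsionfree $U$, extends $f$ along the embedding $\mu_U\colon U\hookrightarrow U_\sigma$ using the restricted injectivity of $M_\sigma$ against $\mathcal{T}$ (and the fact, from \ref{set.initial setting}(i), that $U_\sigma$ again has finite projective dimension), and only then factors inside $\pinf\bigl(\mathcal{G}\cap\lamod\bigr)$ via the equivalence of Lemma \ref{2.2}. You instead descend to $\elaemod$ via $\mathbf{e}$ (leaning on the preservation half of Lemma \ref{lem.preserving-reflecting finprojdim}, hence on \ref{set.initial setting}(ii)), factor there, and return with $(\Lambda e\otimes_{e\Lambda e}-)_\sigma$ and $\mu_P$. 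Your route is more functorial and avoids the explicit injectivity/extension step; the paper's route keeps the lifted map concrete inside $\lamod$, which is the form it reuses later (in Theorem \ref{thm.reduction-to-simples-of-infteprojdim}(2) and Proposition \ref{prop.explicit calculation of approximation}). Minimality is the same in spirit: the paper simply declares it clear from the equivalence, while you spell out the reflection of isomorphisms.
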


\begin{proof}
 Referring to the explicit category equivalences of Lemma \ref{2.2}, we
 see that $(\Lambda e \otimes_{e \Lambda e} q)_\sigma$ is a minimal approximation 
of $M_\sigma$ with respect to the subcategory $\pinf\bigl(\mathcal{G} \cap(\lamod)\bigr)$ 
of $\mathcal{G} \cap(\lamod)$.  
In light of the preceding remarks, the domain of this map has finite projective dimension also in 
$\lamod$. Right minimality of $(\Lambda e \otimes_{e \Lambda e} q)_\sigma$  in $\lamod$
 is clear. To see that $(\Lambda e \otimes_{e \Lambda e} q)_\sigma$ is actually a 
$\pinf(\lamod)$-approximation 
of $M_\sigma$, let $U \in \pinf(\lamod)$ and $f \in \Hom_{\Lambda}(U, M_\sigma)$.  Since $M_\sigma$ is torsionfree and $U/\Delta(U)$ again belongs to $\pinf(\lamod)$, we do not 
lose generality in assuming that $U$ is torsionfree.  Consequently, $\mu_U: U \rightarrow U_\sigma$ is 
an embedding with the property that $U_\sigma /U \in \mathcal{T}$.  Now
 the restricted injectivity property of $M_\sigma$ yields an 
extension of $f$ to $f^*: U_\sigma \rightarrow M_\sigma$, i.e., $f = f^*\circ \mu_U$.  Since $U_\sigma$ has finite projective dimension in $\mathcal{G} \cap (\lamod)$, we further obtain a
 factorization $f^* =  (\Lambda e \otimes_{e \Lambda e} q)_\sigma \circ g$ for some 
$g \in \Hom_{\Lambda}(U_\sigma,  (\Lambda e \otimes_{e \Lambda e} X)_\sigma)$.  This clearly yields a factorization 
of $f$ through $(\Lambda e \otimes_{e \Lambda e} q)_\sigma$.   
\end{proof}

The next observation depends only on Condition \ref{condition4.1}(i). 

\begin{lema}\label{lem.P-cover of torsionfree}  
Let $M\in \lamod$, and let  $p:\mathcal{A}(M) \rightarrow M$ be a 
minimal $\pinf(\lamod)$-approximation. Suppose $p$ factors in the 
form $p = \tau \circ  \rho$ 
with $\rho \in \Hom_{\Lambda}(\mathcal{A}(M),N)$ and $\tau \in \Hom_{\Lambda}(N, M)$.
\smallskip

If $\pdim N <\infty$ and $\rho$ is an epimorphism, then $\rho$ is an isomorphism. 
\smallskip
In particular: If $M \in \mathcal{F}$, then also $\mathcal{A}(M) \in \mathcal{F}$.
\end{lema}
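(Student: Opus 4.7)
The proof splits naturally into the two assertions, the second reducing to the first via a carefully chosen factorization. The key tool for the first statement is the familiar fact that a right minimal approximation admits no self-factorizations other than through isomorphisms.

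For the first assertion I would argue as follows. Since $\rho$ is an epimorphism between finitely generated modules and $\pdim_\Lambda N < \infty$, we have $N \in \pinf(\lamod)$, so the approximation property of $p$ applied to $\tau: N \to M$ yields a morphism $s \in \Hom_\Lambda(N, \mathcal{A}(M))$ with $p \circ s = \tau$. Composing on the right with $\rho$ gives
\[
p \circ (s \circ \rho) \;=\; \tau \circ \rho \;=\; p,
\]
so right minimality of $p$ forces the endomorphism $s \circ \rho$ of $\mathcal{A}(M)$ to be an automorphism. In particular $\rho$ is a monomorphism (left factor of an isomorphism), and combined with the hypothesis that $\rho$ is epic, this yields that $\rho$ is an isomorphism.

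For the ``in particular'' statement I would take $\rho$ to be the canonical projection $\mathcal{A}(M) \twoheadrightarrow \mathcal{A}(M)/\Delta(\mathcal{A}(M))$. Since $\Delta(\mathcal{A}(M)) \in \mathcal{T}$ while $M \in \mathcal{F}$, the map $p$ vanishes on $\Delta(\mathcal{A}(M))$ and therefore factors uniquely in the form $p = \tau \circ \rho$ for some $\tau: \mathcal{A}(M)/\Delta(\mathcal{A}(M)) \to M$. It remains to verify that $N := \mathcal{A}(M)/\Delta(\mathcal{A}(M))$ has finite projective dimension; this is precisely where Condition \ref{condition4.1}(i) enters. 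Indeed, every composition factor of $\Delta(\mathcal{A}(M))$ is annihilated by $e$ and thus belongs to $\add\bigl(\Lambda(1-e)/J(1-e)\bigr)$, so $\pdim_\Lambda \Delta(\mathcal{A}(M)) < \infty$ by the blanket hypothesis. The short exact sequence
\[
0 \longrightarrow \Delta(\mathcal{A}(M)) \longrightarrow \mathcal{A}(M) \longrightarrow N \longrightarrow 0,
\]
together with $\mathcal{A}(M) \in \pinf(\lamod)$, then forces $N \in \pinf(\lamod)$. Applying the first assertion, $\rho$ is an isomorphism, hence $\Delta(\mathcal{A}(M)) = 0$, i.e., $\mathcal{A}(M) \in \mathcal{F}$.

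The proof is essentially formal, and I do not expect any serious obstacle. The only delicate point is recognizing that the canonical projection onto the torsionfree quotient realizes the factorization hypothesis of the first part, and that Condition \ref{condition4.1}(i) is exactly what guarantees the quotient remains in $\pinf(\lamod)$; absent that hypothesis, the passage from the first statement to the second would break down.
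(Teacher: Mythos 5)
Your proof is correct and follows essentially the same route as the paper: the second assertion is derived exactly as in the paper, by applying the first to the canonical map $\mathcal{A}(M) \twoheadrightarrow \mathcal{A}(M)/\Delta\bigl(\mathcal{A}(M)\bigr)$ and using Condition \ref{condition4.1}(i) to see the quotient lies in $\pinf(\lamod)$. The only cosmetic difference is in the first implication, where you invoke the endomorphism characterization of right minimality (producing the automorphism $s\circ\rho$), while the paper observes that $\tau$ is itself a $\pinf(\lamod)$-approximation and compares lengths; these are equivalent characterizations recalled in Section 2.1, so the arguments coincide in substance.
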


\begin{proof}  For the first implication it suffices to observe that, under the given premise, $\tau:  N \rightarrow M$ is in turn a $\pinf(\lamod)$-approximation of $M$.  Minimality of $p$ thus shows that  $\text{length}(\mathcal{A}(M)) \le \text{length}(N)$.  Therefore the epimorphism $\rho$ is an isomorphism.

To derive the final implication, assume that $M$ is torsionfree, and apply the preceding implication to the canonical map $\rho: \mathcal{A}(M) \rightarrow \mathcal{A}(M) /\Delta\bigl(\mathcal{A}(M) \bigr)$.  
\end{proof}
\smallskip

\subsection{The main theorem}

The upcoming theorem shows that, in the situation of \ref{condition4.1}, contravariant finiteness of 
$\mathcal{P}^{<\infty}(\lamod)$ in $\lamod$ is equivalent to 
contravariant finiteness of $\pinf(\elaemod)$ in $\elaemod$.   In fact, we obtain sharper information, relating minimal $\pinf$-approximations in $\lamod$ to minimal 
$\pinf$-approximations in $\elaemod$.

As before, $(\mathcal{T}, \mathcal{F}) = ({\mathcal{T}} _ {e}, {\mathcal{F}}_e)$ is the torsion pair of subsection 2.2.  Recall 
that the class $\mathcal{T}$ consists of the $\la$-modules all of whose composition factors 
belong to $\add\bigl(\Lambda (1 - e)/ J (1 - e)\bigr)$, while $\mathcal{F}$ consists of the modules $F$ 
with $\soc F \in \add(\Lambda e/ Je)$.

\begin{teor} \label{thm.reduction-to-simples-of-infteprojdim}
Let $\Lambda$ be a basic Artin algebra, and let $e\in\Lambda$ be an idempotent such that 
the left  $\Lambda$-module $\Lambda (1 - e)/ J(1 - e)$ has finite 
projective dimension and $e\Lambda (1 - e)$ has finite 
projective dimension as a left $e\Lambda e$-module.  

Then $\mathcal{P}^{<\infty}(\lamod)$ is contravariantly finite in $\lamod$ if and only if $\mathcal{P}^{<\infty}(\elaemod)$ is contravariantly finite in $e\Lambda e\text{-}{\mod}$. 

 In more detail:  The following two implications hold for every finitely generated left $\Lambda$-module $F \in \mathcal{F}$.

 \begin{enumerate}
\item If  $p:M\longrightarrow F$ is a minimal right  $\mathcal{P}^{<\infty}(\lamod)$-approximation of $F$, 
then $p_{| eM}:eM\longrightarrow eF$ is a minimal right $\mathcal{P}^{<\infty}(\elaemod)$-approximation of $eF$.  
\smallskip
\item Suppose that $q:X\longrightarrow eF$ is a minimal right  $\mathcal{P}^{<\infty}(e\Lambda e \text{-}\mod)$-approximation of $eF$, and let $f$ be the composition 
$$\frac{\Lambda e\otimes_{e\Lambda e}X}{\Delta (\Lambda e\otimes_{e\Lambda e}X)} \ \ \ \overset{\overline{1\otimes q}}{\longrightarrow} \ \ \ \frac{\Lambda e\otimes_{e\Lambda e}eF}{\Delta (\Lambda e\otimes_{e\Lambda e}eF)}\ \  \overset{\overline{\epsilon}_F}{\longrightarrow}\ \ F,$$ 
where $\overline{\epsilon}_F$ is induced by the natural map $\epsilon_F: \Lambda e \otimes_{e \Lambda e} eF \rightarrow F$.
Then any representative of the unique maximal element of the poset $\mathcal{E}_f$ of 
essential $\Delta$-extensions of $f$ (cf.  Proposition \ref{prop.poset of Delta-extensions} for existence) is a minimal right $\mathcal{P}^{<\infty}(\Lambda \text{-}\mod)$-approximation of $F$. 
\end {enumerate}
 
\end{teor}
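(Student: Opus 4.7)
The plan is to take statement \textbf{(2)} as the technical heart; statement \textbf{(1)} will then follow from it by naturality, and the global equivalence of contravariant finiteness is obtained by splitting an arbitrary $M\in\lamod$ along the short exact sequence $0\to\Delta(M)\to M\to M/\Delta(M)\to 0$. Here $\Delta(M)\in\pinf(\lamod)$ because its composition factors all occur in $\la(1-e)/J(1-e)$, which has finite projective dimension by \ref{condition4.1}(i). For the forward implication, I would view each $Y\in\elaemod$ via the equivalence $\elaemod\approx\mathcal{C}\cap\mathcal{F}$ of Lemma \ref{2.2}(2) as a torsionfree $\la$-module $\widetilde{Y}$, take a $\pinf(\lamod)$-approximation of $\widetilde{Y}$, and apply (1) to obtain a $\pinf(\elaemod)$-approximation of $e\widetilde{Y}\cong Y$; for the reverse, I would apply (2) to $F=M/\Delta(M)\in\mathcal{F}$ and pull the resulting $\pinf(\lamod)$-approximation back along $M\twoheadrightarrow F$, noting that the pulled-back module sits in a short exact sequence with both ends in $\pinf(\lamod)$.

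For \textbf{(2)}, the first step is to verify that the domain $N=\mathfrak{core}(\la e\otimes_{e\la e}X)$ of $f$ lies in $\pinf(\lamod)\cap\mathcal{F}$: torsionfreeness follows from the defining maximality of $\Delta(\la e\otimes_{e\la e}X)$ as the largest $e$-annihilated submodule (any $e$-annihilated submodule in the quotient would lift past $\Delta$, a contradiction), and finite projective dimension comes from Lemma \ref{lem.preserving-reflecting finprojdim} together with \ref{condition4.1}(i), which places $\Delta(\la e\otimes_{e\la e}X)\in\mathcal{T}\subseteq\pinf(\lamod)$. The maximal essential $\Delta$-extension $[(N^*,\phi)]$ supplied by Proposition \ref{prop.poset of Delta-extensions} then satisfies $N^*/N\in\mathcal{T}\subseteq\pinf(\lamod)$, so $N^*\in\pinf(\lamod)$. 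Using Lemmas \ref{core} and \ref{2.2}(2), the localized map $f_\sigma\colon N_\sigma\to F_\sigma$ identifies canonically with $(\la e\otimes_{e\la e} q)_\sigma$, which Lemma \ref{Lemma 4.4} certifies as the minimal $\pinf(\lamod)$-approximation of $F_\sigma$. The approximation property of $\phi$ then runs as expected: given $h\colon U\to F$ with $U\in\pinf(\lamod)$, replace $U$ by $U/\Delta(U)$ to assume $U\in\mathcal{F}$ (still in $\pinf(\lamod)$ by \ref{condition4.1}(i)); localize to $h_\sigma\colon U_\sigma\to F_\sigma$; factor $h_\sigma$ through $f_\sigma$ by the approximation property of the latter; and combine with $\mu_F\circ h$ via the pullback presentation of $N^*$ from Proposition \ref{prop.poset of Delta-extensions} to obtain the required lift $U\to N^*$.

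The step I anticipate as the main obstacle is the minimality of $\phi$. Suppose $u\in\End_\la(N^*)$ satisfies $\phi\circ u=\phi$. The identification $N^*_\sigma=N_\sigma$ arising from $N^*/N\in\mathcal{T}$ makes the pullback projection $\chi\colon N^*\hookrightarrow N_\sigma$ coincide with the unit $\mu_{N^*}$, so naturality of $\mu$ yields $\chi\circ u=u_\sigma\circ\chi$. Post-composing $\phi u=\phi$ with $\mu_F$ gives $f_\sigma u_\sigma\chi=f_\sigma\chi$; since $N_\sigma/\chi(N^*)\in\mathcal{T}$ while $F_\sigma\in\mathcal{F}$, this propagates to $f_\sigma u_\sigma=f_\sigma$ on all of $N_\sigma$, whence minimality of $f_\sigma$ forces $u_\sigma$ to be an automorphism of $N_\sigma$. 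The subtle closing point is to transport this automorphism back to $N^*$: the pullback description yields $\chi(N^*)=f_\sigma^{-1}(\mu_F(F))$, and combining $f_\sigma u_\sigma=f_\sigma$ with the invertibility of $u_\sigma$ shows that both $u_\sigma$ and $u_\sigma^{-1}$ preserve this preimage setwise, so $u$ itself is invertible.

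Finally, \textbf{(1)} is harvested by running the construction of (2) on a minimal $\pinf(\elaemod)$-approximation $q\colon X\to eF$ to produce a minimal $\pinf(\lamod)$-approximation $\phi\colon N^*\to F$; uniqueness of minimal approximations then identifies this with the given $p\colon M\to F$. Applying the functor $\mathbf{e}$ wipes out the $\Delta$-extension (as $N^*/N$ is $e$-annihilated) and collapses the core quotient, leaving $e(N^*)\cong X$ and $e(\phi)\cong q$, which is precisely the description of $p_{|eM}\colon eM\to eF$ claimed in (1).
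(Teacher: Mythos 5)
Your handling of statement (2) is essentially sound, and it takes a legitimately different route from the paper's: you work in the Giraud subcategory via $\sigma$, Lemma \ref{Lemma 4.4} and the pullback description in Proposition \ref{prop.poset of Delta-extensions}, and you verify right minimality through the endomorphism characterization, whereas the paper argues with explicit pushouts in $\lamod$ and a direct-summand decomposition. The reverse half of the biconditional (pulling the approximation of $M/\Delta(M)$ back along $M\twoheadrightarrow M/\Delta(M)$) is also fine. The genuine gap is in how you obtain statement (1). You derive (1) by running the construction of (2) on a minimal $\pinf(\elaemod)$-approximation $q\colon X\to eF$ and then invoking uniqueness of minimal approximations. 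But the hypothesis of (1) is only that $F$ admits a minimal $\pinf(\lamod)$-approximation; nothing guarantees a priori that $eF$ has any $\pinf(\elaemod)$-approximation, so there is no $q$ to feed into (2). This is not a removable technicality: in your global argument the forward implication (contravariant finiteness of $\pinf(\lamod)$ implies that of $\pinf(\elaemod)$) is precisely where (1) is applied, to $Y\cong e\widetilde{Y}$ with $\widetilde{Y}\in\mathcal{C}\cap\mathcal{F}$; using your conditional version of (1) there presupposes that $Y$ already has a minimal $\pinf(\elaemod)$-approximation, which is the very statement being proved, so the argument is circular.

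What is needed, and what the paper supplies, is a direct proof of (1) that manufactures the $e\la e$-side approximation from the $\la$-side one: given $\alpha\colon Z\to eF$ with $Z\in\pinf(\elaemod)$, Lemma \ref{lem.preserving-reflecting finprojdim} places $\la e\otimes_{e\la e}Z$ in $\pinf(\lamod)$, so $\epsilon_F\circ(1\otimes\alpha)$ factors through $p$, and applying the exact functor $\mathbf{e}$ together with the natural isomorphism $\mathbf{e}\circ(\la e\otimes_{e\la e}-)\cong 1_{\elaemod}$ shows that $\alpha$ factors through $p_{|eM}$; note that $eM\in\pinf(\elaemod)$, again by Lemma \ref{lem.preserving-reflecting finprojdim}. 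Right minimality of $p_{|eM}$ then needs its own argument and cannot be borrowed from uniqueness either (uniqueness is only available once existence of a minimal $e\la e$-approximation is known); the paper rules out a direct summand $Y$ of $eM$ killed by $p_{|eM}$ via a pushout construction combined with Lemma \ref{lem.P-cover of torsionfree}. Once (1) is proved in this unconditional form, your reduction of the biconditional to (1) and (2), and your proof of (2), go through.
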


\begin{rem} \label{Remark 4.7} 
 By Lemma \ref{core}, the map $\overline{1 \otimes q}$ in statement (2) of the theorem is naturally 
isomorphic to the map
 $\mathfrak{core}(1 \otimes q):  \mathfrak{core}(\Lambda e \otimes_{e \Lambda e} X) \rightarrow \mathfrak{core}(F)$ and, modulo 
the canonical isomorphism $\mathfrak{core}(F) \cong \frac{\Lambda e\otimes_{e\Lambda e}eF}{\Delta (\Lambda e\otimes_{e\Lambda e}eF)}$, the map $\overline{\epsilon}_F: \mathfrak{core}(F) = \nabla(F) \rightarrow F$ is the embedding of $\mathfrak{core}(F)$ into $F$.  
\end{rem}

\begin{proof}
We start by showing that the first assertion follows from (1) and (2).   Indeed, the correspondence $S \leftrightarrow eS$ is a bijection between the isomorphism classes of simple $\la$-modules in $\add(\Lambda e /Je)$ and those of the simple left $e \Lambda e$-modules.  In light of \cite [Proposition 3.7]{AR1} and the fact that the simples in $\add\bigl(\la (1 - e)/ J(1 - e)\bigr)$ have finite projective dimension, it thus suffices to show that any 
simple module $S \in \add(\Lambda e/ Je)$ has a $\mathcal{P}^{<\infty}(\lamod)$-approximation precisely when $eS$ has a $\pinf(\elaemod)$-approximation.  Clearly, the simple summands $S$ of $\Lambda e / Je$ satisfy $S = \Lambda e S \in \mathcal{F}$, and thus the required equivalence arises from (1) and (2) as a special case.
\smallskip

 (1) We assume that  $p:  M \rightarrow F$ is a minimal $\mathcal{P}^{<\infty}(\lamod)$-approximation of $F$, and set $p'=p_{| eM}:eM\longrightarrow eF$.  
 Moreover, we abbreviate $\Lambda e \otimes_{e \Lambda e}Y$ to $Y^\dagger
$
 for $Y \in {\elaemod}$ and $\Lambda e  \otimes_{e \Lambda e} eN$ to $N^\ddagger$ for 
$N\in \laMod$; thus $(-)^\dagger$ and $(-)^\ddagger$ are functors $\elaeMod \rightarrow 
\laMod$ and $\laMod \rightarrow \laMod$, respectively, such that $N^
\ddagger = (eN)^\dagger$.  Let $\alpha \in \Hom_{e \Lambda e}(Z, eF)$ for some $Z \in \pinf(\elaemod)$. By 
Lemma \ref{lem.preserving-reflecting finprojdim}, 
$Z^\dagger \in \mathcal{P}^{<\infty}(\lamod)$, and hence the composition $\epsilon_F \circ (1 \otimes \alpha):  \Lambda e \otimes_{e \Lambda e} Z  = Z^\dagger \rightarrow F$ belongs to $\Hom_{\Lambda}(\mathcal{P}^{<\infty}(\lamod), F)$.  By hypothesis, we thus obtain a 
map $\beta \in \Hom_{\Lambda}(Z^\dagger, M)$ such that $\epsilon_F \circ (1 \otimes \alpha) = p \circ \beta$. Since the 
functor ${\bf{e}} \circ (\Lambda e \otimes_{e \Lambda e} -) = {\bf{e}} \circ (-)^\dagger$ is naturally equivalent to $1_{{\bf{e}} \Lambda e\text{-Mod}}$, 
application of ${\bf{e}}$ to this equality shows that $\beta$ factors through $p'$.   Therefore $p'$ is a 
$\pinf(\elaemod)$-approximation of $eF$.

To prove that $p'$ is (right) minimal, it suffices to show that no nonzero direct summand of $eM$ is contained in the kernel of $p'$.  So suppose $eM=X\oplus Y$ with $p'(Y) = 0$, and let $\pi: eM \rightarrow X$ be the projection onto $X$ along $Y$.  The latter gives rise to a corresponding  projection 
${\rm {pr}}: M^\ddagger/ \Delta(M^\ddagger) \rightarrow  X^\dagger/ \Delta(X^\dagger)$. 
 Bearing 
in mind that $M \in \mathcal{F}$ by Lemma \ref{lem.P-cover of torsionfree},  we obtain the following commutative diagram with exact rows 
$$\xymatrixrowsep{2.5pc}\xymatrixcolsep{4pc}
\xymatrix{ 0 \ar[r]  &M^\ddagger/\Delta(M^\ddagger) \ar[d]_{\text{pr}} \ar[r]^-{\overline{\epsilon}_M}  &M \ar[d]^{\rho} 
\ar[r]  &\Coker(\overline{\epsilon}_M) \ar[d]^{\cong} \ar[r]  &0  \\0 \ar[r]  &X^\dagger/\Delta(X^\dagger) \ar[r]^-{u}  &N 
\ar[r]  &\Coker(u) \ar[r]  &0}$$
\noindent where the left-hand square is the pushout of $\overline{\epsilon}_M$ and $\text{pr}$.  In particular, $\rho$  is an epimorphism and $u$ is a monomorphism. Both of the flanking terms of the bottom exact sequence have 
finite projective dimension:  For $X^\dagger/ \Delta(X^\dagger)$ this follows from Lemma \ref{lem.preserving-reflecting finprojdim}, for the righthand term it follows from  $\Coker(u) \cong \Coker(\overline{\epsilon}_M) \cong M/ \nabla(M)$. Consequently, also $N$ has finite projective dimension. 

On defining $q$ to be the restriction of $\overline{\epsilon}_F \circ (\overline{1 \otimes p'})$ to $X^\dagger/ \Delta(X^\dagger)$, one readily checks that the following diagram commutes: 
$$\xymatrixrowsep{2.5pc}\xymatrixcolsep{4pc}
\xymatrix{M^\ddagger/\Delta(M^\ddagger) \ar[d]_{\text{pr}} \ar[r]^-{\overline{\epsilon}_M}  &M \ar[d]^{p}  \\X^\dagger/\Delta(X^\dagger) \ar[r]^-{q}  &F}$$
\noindent The universal property of the pushout therefore yields a map $v \in \Hom_{\la}(N, F)$ such that $v \circ \rho = p$. By Lemma \ref{lem.P-cover of torsionfree}, $\rho$ is an isomorphism, whence so is $\text{pr}$, meaning that $Y^\dagger/ \Delta(Y^\dagger) = 0$.  Thus $Y^\dagger \in  \mathcal{T}$, and we conclude that $Y \cong {\bf{e}}(Y^\dagger) = 0$.  This confirms minimality of $p'$. 

\smallskip 

(2) Let $q:X\longrightarrow eF$ and $f$ be as in the assertion, and suppose that the pair $(M, p)$ represents the maximum of the set $\mathcal{E}_f$.  To check that $p: M \rightarrow F$ is a $\mathcal{P}^{<\infty}(\lamod)$-approximation of $F$, let 
$h \in \Hom_{\Lambda}(N,F)$ with
 $N \in \mathcal{P}^{<\infty}(\lamod)$.  Since $\Delta(F) = 0$ 
 and $N/ \Delta(N)$ again belongs to $\mathcal{P}^{<\infty}(\lamod)$, we may assume that $\Delta(N) = 0$.  In light of Lemma \ref{lem.preserving-reflecting finprojdim}, the restriction $h' = h|_{eN}: eN \rightarrow eF$ is a morphism in $\Hom_{e \Lambda e}\bigl(\pinf(\elaemod), eF\bigr)$, whence it factors through $q$, say $h' = q \circ \eta$ with $\eta \in \Hom_{e\Lambda e}(eN, X)$.  Again referring to Lemma \ref{lem.preserving-reflecting finprojdim}, we moreover see that the map $\overline{1 \otimes \eta}: N^{\ddagger}/ \Delta(N^{\ddagger}) \rightarrow X^{\dagger}/ \Delta(X^{\dagger})$ induced by $\eta$ is a morphism in 
$\mathcal{P}^{<\infty}(\lamod)$.  In order to suitably extend $\overline{1 \otimes \eta}$ to a homomorphism with domain $N$ via the monomorphism $\overline{\epsilon}_N: N^{\ddagger}/ \Delta(N^{\ddagger}) \rightarrow N$, we consider the pushout diagram of the pair $(\overline{\epsilon}_N, \overline{1 \otimes \eta})$, as shown in the diagram below:  
$$\xymatrixrowsep{2.5pc}\xymatrixcolsep{4pc}\xymatrix{0 \ar[r]  &N^\ddagger/\Delta(N^\ddagger) \ar[d]_{\overline{1\otimes\eta}} \ar[r]^-{\overline{\epsilon}_N}  &N \ar[d]_{g} \ar[r] 
\ar@'{@+{[0,0]+(8,-5)} @+{[1,0]+(8,0)} @+{[2,0]+(8,0)}}[dddl]^(0.75){h}  &\Coker(\overline{\epsilon}_N) 
\ar[d]^{\cong} \ar[r]  &0  \\
0 \ar[r]  &X^\dagger/\Delta(X^\dagger) \ar[d]^{\overline{1\otimes q}} \ar[r]^-{\lambda} \ar@/_3pc/[dd]_{f} &N' \ar[r]  &
\Coker(\lambda) \ar[r]  &0 \\ &F^\ddagger/\Delta(F^\ddagger) \ar[d]^{\overline{\epsilon}_F}    &  \\ &F }$$
\noindent Since the image of $\overline{\epsilon}_N$ is $\nabla (N)$, we may use the argumentation in the proof of (1) to infer 
that $N' \in \mathcal{P}^{<\infty}(\lamod)$.  
By the naturality of $\overline{\epsilon}$ 
(see Lemma \ref{core}),  $\overline{\epsilon}_F \circ \overline{1 \otimes h'}$ 
coincides with $h \circ \overline{\epsilon}_N$, which yields $f \circ (\overline{1 \otimes \eta}) = h \circ 
\overline{\epsilon}_N$; in other words, the above diagram fully commutes.  Thus the universal property of the pushout 
provides us with a map $\phi \in \Hom_{\Lambda}(N', F)$ such that $\phi  \circ \lambda  =f$ 
and $\phi \circ g = h$.  Given that 
$F$ is torsionfree, $\phi$ factors through the canonical map $\pi: N' \rightarrow N'/ \Delta(N')$; denote the induced 
map $N'/ \Delta(N') \rightarrow F$ by $\overline{\phi}$, and note that the composition $\overline{\lambda}: = \pi \circ 
\lambda$ is still an embedding, since Im$(\lambda) \cap  \Delta(N')  = 0$; it is thus harmless to view $\overline{\lambda}$ as a 
set inclusion. We will ascertain that  the pair $\bigl(N'/\Delta(N'),\, \overline{\phi}\, \bigr)$ gives rise to a class in
 $\mathcal{E}_f
$:  Indeed, the cokernel of $\overline{\lambda}$ belongs to $\mathcal{T}$, since it is an 
epimorphic image of $N / \nabla(N)
$.  Torsionfreeness of $N'/ \Delta(N')$ moreover guarantees that $\overline{\lambda}$ is an essential extension.   
That $\overline{\phi}$ extends $f$, is immediate from our construction.  
The inequality $[\bigl(N'/ \Delta(N'),\, \overline{\phi}\, \bigr )] \preceq  [(M, p)]$ now yields a monomorphism $v:  N'/
\Delta(N') \rightarrow M$ such that $p \circ v = \overline{\phi}$.  It is straightforward to deduce that  $h$ factors 
through $p$ as required.   
 
To see that $p$ is a right minimal morphism, consider a decomposition
 $p = \begin{pmatrix}  p_1& 0 \end{pmatrix}   :M=U \oplus V\longrightarrow F$, where $p_1$ is right minimal. Due to the fact that $[(M,p )]\in\mathcal{E}_f$, the domain $M$ of $p$ is an essential extension of $X^{\dagger}/ \Delta(X^{\dagger})$ such that the quotient of $M$ modulo $X^{\dagger}/ \Delta(X^{\dagger})$ belongs to $\mathcal{T}$.  This means that $eM = e \bigl(X^{\dagger}/ \Delta(X^{\dagger})\bigr)$, whence the induced 
{map $p'=p_{| eM}:eM\rightarrow eF$ } coincides with the restriction $f'$ of $f$ to $e \bigl(X^{\dagger}/ \Delta(X^{\dagger})\bigr)$. In light of the natural equivalence 
${\bf{e}} \circ (\Lambda e \otimes_{e \Lambda e} -)  \cong 1_{\elaemod}$, this identifies $f'$ with the minimal 
approximation $q:X\rightarrow eF$.  Due to the matrix decomposition $p'=\begin{pmatrix} p_1' & 0 \end{pmatrix} :X\cong eU\oplus eV\rightarrow eF$, where $p_1' = p_1 {_{| eU}}$,  right minimality of $q$ thus yields $eV=0$, so that $V\in\mathcal{T}$.  On the other hand, $V \in \mathcal{F}$; this follows from the fact that $M \in \mathcal{F}$, because the torsionfree module $X^{\dagger} / 
\Delta(X^{\dagger})$ is an essential submodule of $M$.  Consequently, $V=0$, which proves minimality of $p$ as claimed. 
\end{proof}

We apply Theorem \ref{thm.reduction-to-simples-of-infteprojdim} 
to the situation where the $\pinf$-categories of $\elaemod$ and (equivalently) $\lamod$
 are contravariantly finite in the corresponding ambient module categories.  The proposition below picks up the theme of Lemma \ref{Lemma 4.4} and Remark \ref{Remark 4.7}. 
 It reinforces our understanding of the links among the minimal approximations of objects 
$M \in \lamod$ 
 and those of the corresponding $\la$-modules $M/\Delta(M)$, $M_\sigma$, and $\mathfrak{core}(M)$. 
  These connections underlie the upcoming exploration of the basic algebra $\latilde$ that results from strongly tilting $\lamod$.

\begin{prop} \label{prop.explicit calculation of approximation}
Continue to adopt conditions {\rm 4.1(i)} and {\rm (ii)}, and suppose that $\pinf(\elaemod)$ is contravariantly finite in $\elaemod$. {\rm $\bigl($By Theorem \ref{thm.reduction-to-simples-of-infteprojdim}, this implies contravariant finiteness of $\pinf (\lamod) $ in $\lamod$.$\bigr)$} For $M \in \lamod$, denote by $\mathcal{A} (M)$ the minimal right $\pinf(\lamod)$-approximation of $M$.
\smallskip

\noindent {\bf (a)} If $M \in \mathcal {G}$, then $\mathcal{A}(M) \in \mathcal{G}$.
\smallskip

\noindent {\bf (b)}  If $M$ is torsionfree, then $\mathcal {A} (M)$ naturally 
embeds into $\mathcal {A} (M_\sigma)$, and the core of $\mathcal{A} (M)$ coincides with that of
 $\mathcal{A}(M_\sigma)$.
\smallskip

\noindent {\bf (c)}  For arbitrary 
$M \in \lamod$, $\mathcal{A}  \bigl(M/\Delta(M)\bigr) \cong \mathcal{A} (M)/\Delta(\mathcal{A}(M))$, and the torsion submodule $\Delta(\mathcal{A} (M))$ is isomorphic to $\Delta(M)$. 
In particular, there is a natural homomorphism 
$\rho: \mathcal{A} (M) \rightarrow \mathcal{A} (M_\sigma)$ such that 
 $\Ker(\rho)$ and   $\Coker (\rho)$ belong to $\mathcal{T}$.  
\end{prop}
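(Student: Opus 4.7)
My plan is to treat the three parts in order. Parts (a) and (b) should follow directly from the tools assembled in Lemma \ref{Lemma 4.4}, Theorem \ref{thm.reduction-to-simples-of-infteprojdim}(2), and Proposition \ref{prop.poset of Delta-extensions}; (c) then reduces to (b) through a length-count argument. Throughout, let $q : X \rightarrow eM$ denote the minimal $\pinf(\elaemod)$-approximation of $eM$, which exists by hypothesis.

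For (a), the assumption $M \in \mathcal{G}$ gives $M = M_\sigma$, and Lemma \ref{Lemma 4.4} then identifies $\mathcal{A}(M)$ directly with $(\Lambda e \otimes_{e \Lambda e} X)_\sigma$, which lies in $\mathcal{G}$ by construction. For (b), I would apply Theorem \ref{thm.reduction-to-simples-of-infteprojdim}(2) with $F = M$: $\mathcal{A}(M)$ arises as the maximum of the poset $\mathcal{E}_f$ for the appropriate $f : \mathfrak{core}(\Lambda e \otimes_{e\Lambda e} X) \rightarrow M$. The observation $eM = eM_\sigma$, forced by $M_\sigma/M \in \mathcal{T}$, shows that the same $q$ approximates $eM_\sigma$, and Lemma \ref{Lemma 4.4} then identifies $\mathfrak{core}(\Lambda e \otimes X)_\sigma = (\Lambda e \otimes X)_\sigma$ with $\mathcal{A}(M_\sigma)$; under this identification $f_\sigma$ becomes the approximation map $\mathcal{A}(M_\sigma) \rightarrow M_\sigma$. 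Proposition \ref{prop.poset of Delta-extensions} realizes $\mathcal{A}(M)$ as the pullback of $f_\sigma$ against $\mu_M : M \hookrightarrow M_\sigma$, and the required embedding $\chi : \mathcal{A}(M) \hookrightarrow \mathcal{A}(M_\sigma)$ with $\Coker(\chi) \hookrightarrow M_\sigma/M \in \mathcal{T}$ drops out of this pullback. Since $\chi$ is an isomorphism on $e$-parts, one obtains $\nabla(\mathcal{A}(M)) = \Lambda e \cdot \mathcal{A}(M) = \Lambda e \cdot \mathcal{A}(M_\sigma) = \nabla(\mathcal{A}(M_\sigma))$, so the cores coincide.

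For (c), note first that $\Delta(N) \in \pinf(\lamod)$ for every $N \in \lamod$, by \ref{condition4.1}(i). The argument breaks into the following steps: (i) verify that the induced map $\bar p : \mathcal{A}(M)/\Delta(\mathcal{A}(M)) \rightarrow M/\Delta(M)$ is again a $\pinf$-approximation by pulling back morphisms $U \rightarrow M/\Delta(M)$ with $U \in \pinf \cap \mathcal{F}$ along $M \twoheadrightarrow M/\Delta(M)$ to obtain $U' \in \pinf$ with $\Delta(U') = \Delta(M)$, and then factoring through $p$ before quotienting by $\Delta$; (ii) conclude that $\mathcal{A}(M/\Delta(M))$ is a direct summand of $\mathcal{A}(M)/\Delta(\mathcal{A}(M))$; (iii) form the pullback $N$ of $\mathcal{A}(M/\Delta(M)) \rightarrow M/\Delta(M)$ and $M \twoheadrightarrow M/\Delta(M)$ to obtain a $\pinf$-approximation of $M$ of length $|\mathcal{A}(M/\Delta(M))| + |\Delta(M)|$; (iv) use surjectivity of $p$ (a projective cover of $M$ lies in $\pinf$) to show that $p$ restricts to a surjection $\Delta(\mathcal{A}(M)) \twoheadrightarrow \Delta(M)$, the point being that $p^{-1}(\Delta(M))/\Delta(\mathcal{A}(M))$ lives simultaneously in $\mathcal{F}$ (as a submodule of the torsionfree $\mathcal{A}(M)/\Delta(\mathcal{A}(M))$) and surjects onto the torsion quotient $\Delta(M)/p(\Delta(\mathcal{A}(M)))$. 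Combining these inputs, the chain
$$ |\mathcal{A}(M)| = |\mathcal{A}(M)/\Delta(\mathcal{A}(M))| + |\Delta(\mathcal{A}(M))| \,\ge\, |\mathcal{A}(M/\Delta(M))| + |\Delta(M)| = |N| \,\ge\, |\mathcal{A}(M)| $$
collapses to equality throughout, forcing the direct summand decomposition to be trivial and the $\Delta$-surjection to be an isomorphism; this yields both isomorphisms in (c). The natural map $\rho$ is then the composition $\mathcal{A}(M) \twoheadrightarrow \mathcal{A}(M)/\Delta(\mathcal{A}(M)) \cong \mathcal{A}(M/\Delta(M)) \hookrightarrow \mathcal{A}((M/\Delta(M))_\sigma) = \mathcal{A}(M_\sigma)$, where the embedding comes from (b) applied to $M/\Delta(M)$ and the final equality uses that $\sigma$ kills torsion; both $\Ker(\rho)$ and $\Coker(\rho)$ lie in $\mathcal{T}$. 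I expect the main obstacle to be step (iv) of (c), where the surjectivity on $\Delta$-parts must be extracted using the torsion-torsionfree interplay; the remaining arguments amount to diagram-chasing with the tools already in hand.
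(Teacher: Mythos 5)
Your treatment of (a) and (b) uses exactly the paper's toolkit (Lemma \ref{Lemma 4.4}, Theorem \ref{thm.reduction-to-simples-of-infteprojdim}, and the pullback description of the maximum of $\mathcal{E}_f$ in Proposition \ref{prop.poset of Delta-extensions}), and those parts are sound. The genuine problem is step (iv) of your argument for (c). The inference you propose --- that $p^{-1}(\Delta(M))/\Delta(\mathcal{A}(M))$ is torsionfree and surjects onto the torsion module $\Delta(M)/p\bigl(\Delta(\mathcal{A}(M))\bigr)$, hence that quotient must vanish --- is not valid: for the pair $(\mathcal{T},\mathcal{F})$ the torsionfree class is closed under submodules and extensions but \emph{not} under quotients, so a torsionfree module can perfectly well map onto a nonzero torsion module (already in Example \ref{ex.delta.nabla}, $I_1\in\mathcal{F}$ surjects onto the torsion module $S_3$). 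So, as written, step (iv) fails. The statement of (iv) is nevertheless true, and the correct argument uses the approximation property rather than torsion-theoretic position: by condition \ref{condition4.1}(i) the module $\Delta(M)$ lies in $\pinf(\lamod)$, so the inclusion $\Delta(M)\hookrightarrow M$ factors through $p$, say as $p\circ g$ with $g\colon \Delta(M)\rightarrow\mathcal{A}(M)$; since $\Im(g)$ is annihilated by $e$ it is contained in $\Delta(\mathcal{A}(M))$, whence $p\bigl(\Delta(\mathcal{A}(M))\bigr)\supseteq\Delta(M)$ and the restriction $\Delta(\mathcal{A}(M))\rightarrow\Delta(M)$ is onto. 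With this repair your length-count for (c) closes correctly (steps (i)--(iii) are fine, modulo writing out that the pullback in (iii) really is an approximation, which is routine and also left routine in the paper).

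For comparison, the paper's proof of (c) avoids the counting altogether: it forms the same pullback $N$ of $\mathcal{A}\bigl(M/\Delta(M)\bigr)\rightarrow M/\Delta(M)$ along $M\twoheadrightarrow M/\Delta(M)$, notes that the kernel of $N\rightarrow\mathcal{A}\bigl(M/\Delta(M)\bigr)$ is $\cong\Delta(M)$ and equals $\Delta(N)$, and then shows directly that $N\rightarrow M$ is right minimal: for an endomorphism $u$ of $N$ compatible with the map to $M$, applying $\Delta$ and the functor $X\mapsto X/\Delta(X)$ shows both induced endomorphisms are automorphisms, hence so is $u$. This identifies $N$ with $\mathcal{A}(M)$ at once and yields both isomorphisms of (c) without your steps (i), (ii), (iv); your squeeze argument is a legitimate alternative once (iv) is fixed, but its weakest link was precisely the step you flagged.
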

\begin{proof} Part (a) is a consequence of Lemma 4.4.  We re-encounter this fact in a notational setup that is convenient towards parts (b) and (c).

Suppose that $M \in \lamod$ is torsionfree, and fix an injective envelope $E$ of $\mathcal{A}(M)$.  Since $\mathcal{A}(M)$ is again torsionfree 
(Lemma 4.5), so is $\soc\bigl(\mathcal{A}(M)\bigr)$, which implies that $\soc\bigl(\mathcal{A}(M) \bigr)$ is contained in the submodule $\mathfrak{core}\bigl(\mathcal{A}(M)\bigr)$ of $\mathcal{A}(M)$.  
Therefore $E$ is also an injective envelope of $\mathfrak{core}\bigl(\mathcal{A}(M)\bigr)$.
  Moreover, let $p: \mathcal{A} (M) \rightarrow M$ be a minimal $\pinf(\lamod)$-approximation, and set $X = e \mathcal{A}(M)$. 
By Theorem 4.6 and Remark 4.7, the restriction ${\bf e}(p): e \mathcal{A}(M) \rightarrow eM$ is then a minimal 
$\pinf(\elaemod)$-approximation of $eM$, and  $p$ is maximal in the set $\mathcal{E}_f$ of essential $\Delta$-extensions of the composition 
\begin{equation*}
f: \  \frac{\Lambda e\otimes_{e\Lambda e}X}{\Delta (\Lambda e\otimes_{e\Lambda e}X)}\, \cong\, \, \mathfrak{core}(\mathcal{A} (M)) \  \longrightarrow\,  \mathfrak{core}(M) \hookrightarrow \ M.   \tag{$\bullet$}
\end{equation*}

{ (a)} Suppose $M = M_\sigma$.  In view of the fact that
 ${\bf e}(p): X = e  \mathcal{A} (M) \rightarrow eM$ is a minimal $\pinf(\elaemod)$-approximation of $eM$, Lemma 4.4 guarantees that $(\Lambda e \otimes_{e \Lambda e} X)_\sigma$ is a minimal $\pinf(\lamod ) $-approximation of 
$M_\sigma = M$, i.e., $ \mathcal{A} (M)  \cong \bigl( \mathcal{A} (M))_\sigma$ as claimed.

{ (b)}  In light of (a), $(\Lambda e \otimes_{e \Lambda e} X)_\sigma \cong \bigl(\frac{\Lambda e\otimes_{e\Lambda e}X}{\Delta (\Lambda e\otimes_{e\Lambda e}X)}\bigr)_\sigma$ is a minimal 
$\pinf(\lamod)$-approximation of $M_\sigma$. Hence we infer from $(\bullet)$ that 
$$ \mathcal{A}  (M_\sigma) \cong \bigl(\mathfrak{core} \mathcal{A} (M)\bigr)_\sigma \cong   \mathcal{A} (M)_\sigma.$$We may thus identify $ \mathcal{A} (M_\sigma)$ with a submodule of $E$ containing $ \mathcal{A} (M)$ such that $ \mathcal{A} (M_\sigma) /  \mathcal{A} (M) \in \mathcal{T}$.  The latter condition implies that 
$\mathfrak{core} \bigl( \mathcal{A} (M_\sigma)\bigr) = \mathfrak{core} \bigl( \mathcal{A} (M)\bigr)$.  

{(c)}  Now let $M$ be arbitrary, and let $\pi:  M \twoheadrightarrow \overline{M} = M / \Delta(M)$ be the canonical map.  Moreover, suppose that $q:  \mathcal{A} (\overline{M}) \rightarrow \overline{M}$ is a minimal $\pinf(\lamod)$-approximation of $\overline{M}$.  On the model of previous arguments, it is straightforward to check that the map $q': N \rightarrow M$ in the following pullback is a
 $\pinf(\lamod)$-approximation of $M$.
$$\xymatrixrowsep{2.5pc}\xymatrixcolsep{4pc}
\xymatrix{N \ar[d]_{\pi'} \ar[r]^-{q'}  &M \ar[d]^{\pi}  \\  \mathcal{A}(\overline{M}) \ar[r]^-{q}  &\overline{M}}$$

The fact that $\Ker(\pi') \cong \Ker (\pi)$ shows that $\Ker(\pi')$ is a torsion module, necessarily equal to $\Delta(N)$ because $ \mathcal{A} (\overline{M})$ is torsionfree; hence $q'$ induces an isomorphism 
$\Delta(q'): \Delta(N) \cong \Delta(M)$.  To check right minimality of $q'$, suppose
 $u \in \End_{\Lambda}(N)$ satisfies $q' \circ u = q'$.  On applying the functors $\Delta$ and $(1:\Delta)$, the latter acting as $M\rightarrow (1:\Delta )(M):=\overline{M}$, we find that $\Delta(u) \in \End_{\Lambda}\bigl(\Delta(N)\bigr)$ is an isomorphism, since $\Delta(q')$ is an isomorphism; further $(1: \Delta)(u) \in \End_{\la}\bigl(N / \Delta(N)\bigr) \cong \End_{\la}\bigl( \mathcal{A} (\overline{M})\bigr)$ is an isomorphism, since $(1: \Delta )(q') \cong q:  \mathcal{A}  (\overline{M}) \rightarrow \overline{M}$ was chosen right minimal.  Consequently, also $u$ is an isomorphism, which proves $q$ to be right minimal as claimed.

We conclude that $q'$ coincides with $p$ up to isomorphism, whence 
$  \mathcal{A} (M) / \Delta( \mathcal{A}  (M))\linebreak {\cong  \mathcal{A} (\overline{M})}$ and $\Delta\bigl( \mathcal{A} (M)\bigr) \cong \Delta(M)$.  In particular, the composition $$\rho:  \mathcal{A}  (M)\  \twoheadrightarrow\   \mathcal{A} (M)/ \Delta(M)\   \cong\   \mathcal{A} (\overline{M}) \ \hookrightarrow \  \mathcal{A}  (\overline{M}_\sigma)  =  \mathcal{A} (M_\sigma)$$ 
has the postulated properties. 
\end{proof}

\begin{rem} \label{rem.conflicting one}
Analyzing the proof of the preceding proposition, one observes the following: If $\pi_X:X\longrightarrow\overline{X}=X/\Delta (X)$ and $p_X:\mathcal{A}(X)\longrightarrow X$ denote the canonical projection and the minimal $\pinf( \lamod)$-approximation of a module $X$, then one has a composition of pullbacks 
$$\xymatrixrowsep{2.5pc}\xymatrixcolsep{4pc}
\xymatrix{ \mathcal{A} (M)  \ar[d]_{\pi _{\mathcal{A} (M)}} \ar[r]_{p_M}   &M \ar[d]^{\pi _M}  \\  \mathcal{A}(\overline{M}) \ar[d]
 \ar[r]_{p_{\overline{M}}}  & \overline{M} \ar[d]   \\
\mathcal{A} (M_{\sigma} ) \ar[r]_{p_{M_{\sigma}} }   & M_{\sigma}   }
$$
such that the compositions of the vertical arrows are  isomorphic to the canonical maps $\mu_{\mathcal{A}(M)}:\mathcal{A}(M)\longrightarrow\mathcal{A}(M)_\sigma$ and $\mu_M:M\longrightarrow M_\sigma$, respectively.
\end{rem}

In applying Theorem \ref{thm.reduction-to-simples-of-infteprojdim}, we typically decompose $e$ and $1 - e$ into primitive
 idempotents: $e = e_1 + \cdots + e_m$ and $1-e = e_{m+1} + \cdots + e_n$.  
Since part (2) of Theorem \ref{thm.reduction-to-simples-of-infteprojdim} aims at reducing the contravariant finiteness test 
for $\mathcal{P}^{<\infty}(\lamod)$ to that for $\pinf(\elaemod)$, we are interested in making  $e \Lambda e$ as ``small" as possible.  Hence the situation where {\it all\/}  simple left modules $S_i$ of finite projective dimension correspond to idempotents $e_i$ for $i \geq m +1$ is of particular interest.  A modification of an example by Fuller-Saor\'\i n (see \cite [Example 4.2]{FS}) shows that, even for monomial algebras $\la$, neither of the conditions \ref{condition4.1}(i) nor \ref{condition4.1}(ii) in the hypothesis of Theorem \ref{thm.reduction-to-simples-of-infteprojdim} is dispensable. 

\begin{ejem}
 Let $\la$ be the monomial algebra based on the quiver 
$$\xymatrixcolsep{4pc}
\xymatrix{
1 \ar@/^/[r]^{\alpha} \ar@/_/[r]_{\beta} &2 \ar@(ul,ur)^{\varepsilon} \ar@/^/[r]^{\gamma} &3 \ar@/^/[l]^{\delta}
}$$
which is defined by the graphs of its indecomposable projective left $\la$-modules, namely
\[
\xymatrix@R=1.5pc@C=1pc{%
	1 \ar@{-}[d]_{\alpha} \ar@{-}[dr]^{\beta} &&&
	2 \ar@{-}[d]_{\gamma} \ar@{-}[dr]^{\varepsilon} &&&
		3 \ar@{-}[d]^{\delta} \cr
	2 \ar@{-}[d]_{\varepsilon} & 2 &&
		3 \ar@{-}[d]_{\delta} &2 &&2 \ar@{-}[d]^{\varepsilon} \cr
	2 &&&2 \ar@{-}[d]_{\varepsilon} &&&2  \cr
&&&2
}
\]

Clearly, the simple left $\la$-modules $S_1$, $S_2$ have infinite projective dimension, whereas
 $\pdim_{\la} S_3 =2$.  
Let $e = e_1 + e_2$.  Then condition \ref{condition4.1}(i) is satisfied.  On the other hand, the indecomposable projective left $e \Lambda e$-modules $e \Lambda e_i$ for $i = 1,2$ and the $e \Lambda e$-module $e \Lambda (1 - e) = \Lambda e_3$ have the following graphs:
\[
\begin{matrix}
	\xymatrix@R1.5pc@C1pc{%
		1 \ar@{-}[d] \ar@{-}[dr] &&&2 \ar@{-}[d] \ar@{-}[dr]  \cr
		2 \ar@{-}[d] &2 &&2 \ar@{-}[d] &2  \cr
		2 &&&2
	}
	& \qquad &
	\xymatrix@R1.5pc@C1pc{%
		2 \ar@{-}[d]  \cr
		2
		}  \cr
	\begin{matrix}
		\hbox{the indecomposable}\cr
		\hbox{projective left}\cr
		e\Lambda e\hbox{-modules}
	\end{matrix}
	&&
	\begin{matrix}
		\hbox{the}\ e\Lambda e\hbox{-module} \cr
		e\Lambda(1-e)
	\end{matrix}
\end{matrix}
\]

This shows that the left $e \Lambda e$-module $e \Lambda (1- e)$ has infinite projective dimension.  To see that the equivalence of Theorem \ref{thm.reduction-to-simples-of-infteprojdim} fails, observe that  $\lfindim e\Lambda e = 0$, whence $\pinf(\elaemod)$ is contravariantly finite in $\elaemod$.
  Yet $\mathcal{P}^{<\infty}(\lamod)$ is not contravariantly finite in 
$\lamod$, since the simple module $S_1$ does not have a $\mathcal{P}^{<\infty}(\lamod)
$-approximation.  Indeed, consider the family 
$(M_n)_{n \in \Bbb{N}}$
 of objects in $\mathcal{P}^{<\infty}(\lamod)$ shown below:

$$
\xymatrix@R.75pc@C.1pc{%
	\overset{\displaystyle x_1\smash[t]{\vphantom\int}}{1} \ar@{-}[ddrr]^{\beta} &&&&
		2 \ar@{-}[dl] \ar@{-}[ddrr] &&&&
		\overset{\displaystyle x_2\smash[t]{\vphantom\int}}{1} \ar@{-}[ddll]_{\alpha} \ar@{-}[ddrr]^{\beta} &&&&
		2 \ar@{-}[dl]  \ar@{-}[ddrr] &&& \cdots &&&
		\overset{\displaystyle x_n\smash[t]{\vphantom\int}}{1} \ar@{-}[ddll]_{\alpha} \ar@{-}[ddrr]^{\beta} \cr
	&&&
		3 \ar@{-}[dl] &&&&&&&&
		3 \ar@{-}[dl] &&&&&&  \cr
	&& 2 &&&&
		2 &&&& 2 &&&& 2 & \cdots & 2 &&&& 2
}
$$
\smallskip

\noindent With the aid of Criterion 10 of \cite{Happel-HZ},  one readily checks that no
 homomorphism $\phi \in\Hom_{\la}\bigl(\mathcal{P}^{<\infty}(\lamod), S_1\bigr)$  allows for 
factorization of {\it all\/} of the following maps $f_n \in \Hom_{\la}(M_n, S_1)$ in the form $f_n = \phi \circ g_n$; here 
$f_n(x_1) = \overline{e_1} \in \Lambda e_1/ J e_1$ and $f_n(x_i) = 0$ for $i > 1$.  
\end{ejem}

We mention that, in the presence of condition \ref{condition4.1}(ii), condition \ref{condition4.1}(i) in the hypothesis of Theorem \ref{thm.reduction-to-simples-of-infteprojdim} is not superfluous either.  Instances attesting to this are ubiquitous.  In the above example, take $e = e_1$, and note that this choice makes $e \Lambda  e$ semisimple.  

\section{  The basic strong tilting object in $\mathcal{P}^{<\infty}(\lamod)$ and its endomorphism algebra }

In this section, we focus on the situation where $\mathcal{P}^{<\infty}(\lamod)$ 
is contravariantly finite in $\lamod$. Let $\latilde = \End_{\la}(T)^{\op}$, where $_{\la} T$ is the basic 
strong tilting object in $\lamod$.  The guiding question is this:  When is $\pinf(\modlatilde)$ in 
turn contravariantly finite in $\modlatilde$?  In light of Theorem \ref{teor.strong-tilting-iteration},
 this amounts to the question of when the 
process of strongly tilting $\lamod$ can be repeated arbitrarily.  As witnessed by Example \ref{ejem.Igusa-Smalo-Todorov}, the 
possibility of iteration is not automatic in case $\lamod$ can be strongly tilted once.

\begin{setting} [Upgraded blanket hypothesis] \label{5.1}
Throughout this section, let $\la$ be a basic Artin algebra and $e \in \la$ an idempotent satisfying 
conditions (i)  and (ii) of Setting  \ref{condition4.1}, namely $\pdim_{\la} \Lambda (1 - e)/ J(1 - e) < \infty$ and $\pdim_{e \Lambda e} e \Lambda (1- e) < \infty$, next to the additional condition that 
\smallskip

\noindent  (iii) $\pinf(\elaemod)$ is contravariantly finite in $\elaemod$.
\end{setting}

By Theorem \ref{thm.reduction-to-simples-of-infteprojdim}, $\mathcal{P}^{<\infty}(\lamod)$ is contravariantly 
finite in $\lamod$ in this setting.  We will introduce an idempotent $\etilde$ in $\latilde$ which naturally corresponds to $e$. Our objective is to show that the blanket hypotheses \ref{condition4.1} of the previous section carry over to $\modlatilde$, meaning that the right $\latilde$-module $(1 - \etilde) \latilde/ (1 - \etilde) \Jtilde$ in turn has finite projective dimension, and the corner $(1 - \etilde) \latilde \etilde$ has finite projective dimension 
as a right $\etilde \latilde \etilde$-module.  In light of Theorem \ref{thm.reduction-to-simples-of-infteprojdim}, this will then allow us to deduce contravariant finiteness of $\pinf(\modlatilde)$ from that of $\pinf(\modetillatiletil)$.  

To this end, we will first assemble some information about the $\la$-module structure of the basic strong tilting module $_{\la} T$.

  From Section 2.1 we know that $\add T = \add \mathcal{A}$, where $\mathcal{A}$ is 
a minimal $\mathcal{P}^{<\infty}(\lamod)$-approximation of an injective
 cogenerator of $\lamod$.  In particular, any such minimal approximation $\mathcal{A}$ 
is a strong tilting module.  To pin down a candidate for $\mathcal{A}$, 
we decompose both $e$ and $1 - e$ into sums of primitive idempotents of $\la$, say
 $e = e_1 + \cdots + e_m$ and $1- e = e_{m+1} + \cdots + e_n$.  Moreover, we let $S_i = \Lambda e_i/ J e_i$ be the corresponding simple modules, and choose $\mathcal{A}_i$ to be the minimal 
$\mathcal{P}^{<\infty}(\lamod)$-approximations of their injective envelopes $E(S_i)$.  Clearly, 
$\mathcal{A} := \bigoplus_{1 \le i \leq n} \mathcal{A}_i$ is then as required. 
 Since the injective objects of the Giraud subcategory $\mathcal{G}$ of 
$\lamod$ relative to the torsion pair $(\mathcal{T}, \mathcal{F})$ 
(see 2.4) are precisely the torsionfree injective $\la$-modules, the subsum $\bigoplus_{1 \leq i \leq m} E(S_i)$ is an injective cogenerator in $\mathcal{G}$.  
Moreover, according to Lemma 2.2, left multiplication by $e$ induces an equivalence of
 categories $\mathcal{G} \cap \lamod \cong (e \Lambda e)\text{-mod}$, and $\bigoplus_{1 \le i \le m} e E(S_i)$ is an injective cogenerator in $\elaemod$. 
  We separately explore the direct sums $\bigoplus_{1 \le i \le m} \mathcal{A}_i$ and
 $\bigoplus_{m+1 \le i \le n} \mathcal{A}_i$.
\smallskip

\begin{prop} \label{prop.decomposition-strongtilting}
 Assume the hypotheses \ref{5.1}, and let $T \in \lamod$ be the basic strong tilting module. 
  Then $\add(\bigoplus_{1 \le i \le m} \mathcal{A}_i)$
 contains precisely $m$ isomorphism classes of indecomposable modules,
 represented by $T_1, \dots, T_m$ say. If $T' =  \bigoplus_{1 \le i \le m} T_i$,  then the direct summand
 $T'$ of $T$ is an object of \, $\mathcal{G}$, and $e T'$ is the basic 
strong tilting object in $\elaemod$, up to isomorphism.  

Supplementary direct summands $T_{m+1}, \dots, T_n$ of $T$ such that $T = T'\, \oplus\, \bigoplus_{m+1 \le i \le n}T_i$ are as follows:  For $m+1 \le i \le n$,  the
 minimal $\mathcal{P}^{<\infty}(\lamod)$-approximation $\mathcal{A}_i$ 
of $E(S_i)$ decomposes in the form $\mathcal{A}_i = T_i \oplus U_i$ with $\Delta(\soc T_i) = S_i$ 
and $U_i \in \add(T')$.
\smallskip

\noindent In particular, $\Delta(\soc T) = S_{m+1} \oplus \cdots \oplus S_n$.  
\end{prop}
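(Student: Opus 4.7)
The plan is to analyze the cases $i\le m$ and $i\ge m+1$ separately, exploiting the torsion-theoretic machinery of Section 2 together with the explicit computation of $\pinf$-approximations provided by Lemma \ref{Lemma 4.4} and Proposition \ref{prop.explicit calculation of approximation}. First I will handle $1\le i\le m$. The injective envelope $E(S_i)$ is torsionfree and injective in $\lamod$, hence lies in $\mathcal{G}$ by Lemma \ref{2.2}(1); Proposition \ref{prop.explicit calculation of approximation}(a) then guarantees $\mathcal{A}_i\in\mathcal{G}$. Lemma \ref{Lemma 4.4} applied to $M=E(S_i)$ realizes $\mathcal{A}_i$ as $(\Lambda e\otimes_{e\Lambda e}X_i)_\sigma$, where $X_i\to eE(S_i)$ is the minimal $\pinf(\elaemod)$-approximation of the indecomposable injective $e\Lambda e$-module $eE(S_i)$. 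Since the modules $eE(S_i)$ for $i\le m$ form an injective cogenerator of $\elaemod$, Theorem \ref{teor.initial}(2) applied to the corner algebra identifies $\bigoplus_{i\le m}X_i$ with an $\add$-representative of the basic strong tilting module $T^{\ast}$ of $\elaemod$. Transporting indecomposable summands along the equivalence $\mathcal{G}\cap\lamod\cong\elaemod$ of Lemma \ref{2.2}(1), I obtain exactly $m$ distinct indecomposable summands $T_1,\dots,T_m$ of $\bigoplus_{i\le m}\mathcal{A}_i$, all lying in $\mathcal{G}$. By construction $T'=\bigoplus_{i\le m}T_i$ is basic and lies in $\mathcal{G}$, is a direct summand of $T$ by Theorem \ref{teor.initial}(2), and satisfies $eT'\cong T^{\ast}$, the basic strong tilting module of $\elaemod$.

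Next I turn to $i\ge m+1$, where $S_i$ lies in $\mathcal{T}$ because it is annihilated by $e$. The submodule $\Delta(E(S_i))$ of $E(S_i)$ therefore contains $S_i$ and is a nonzero essential extension of $S_i$, so $\soc\Delta(E(S_i))=S_i$. By Proposition \ref{prop.explicit calculation of approximation}(c), $\Delta(\mathcal{A}_i)\cong\Delta(E(S_i))$. Since $\soc$ and $\Delta$ commute on $\lamod$ (both single out the $\mathcal{T}$-isotypic components of a semisimple socle), this gives $\Delta(\soc\mathcal{A}_i)=S_i$, so that $S_i$ appears with multiplicity one in the semisimple socle of $\mathcal{A}_i$ and is the only torsion-simple summand there. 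I define $T_i$ to be the unique indecomposable direct summand of $\mathcal{A}_i$ whose socle contains $S_i$, and let $U_i$ be the complementary summand, yielding $\mathcal{A}_i=T_i\oplus U_i$ with $\Delta(\soc T_i)=S_i$ by construction; since $\soc U_i\in\add(\Lambda e/Je)$, the module $U_i$ is torsionfree.

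The main obstacle is to show that $U_i\in\add T'$, which I will resolve by a rank-matching argument against the $n$ indecomposable summands of $T$. The modules $T_{m+1},\dots,T_n$ constructed in the previous paragraph are pairwise non-isomorphic, being distinguished by their distinct torsion socles $S_j$, and non-isomorphic to any $T_i$ with $i\le m$, since the latter are torsionfree while the former are not. Each $T_k$ is an indecomposable summand of $\mathcal{A}$ and therefore of $T$ by Theorem \ref{teor.initial}(2); as $T$ has exactly $n$ isomorphism classes of indecomposable summands, the collection $\{T_1,\dots,T_n\}$ must be the complete list. Now any indecomposable summand $V$ of $U_i$ is torsionfree and isomorphic to some $T_k$; the modules $T_j$ with $j\ge m+1$ carry $S_j$ in their socles and hence fail torsionfreeness, forcing $k\le m$, so $V\in\add T'$. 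The final claim $\Delta(\soc T)=\bigoplus_{k=m+1}^n S_k$ then follows by summing the values $\Delta(\soc T_k)=0$ for $k\le m$ and $\Delta(\soc T_k)=S_k$ for $k\ge m+1$.
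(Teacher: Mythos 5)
Your proposal is correct and follows essentially the same route as the paper: identify $T_1,\dots,T_m$ through the strong tilting object of the corner algebra via the equivalence $\mathcal{G}\cap\lamod\simeq\elaemod$, then use $\Delta(\mathcal{A}_i)\cong\Delta\bigl(E(S_i)\bigr)$ from Proposition \ref{prop.explicit calculation of approximation}(c) to split off a unique $T_i$ with $\Delta(\soc T_i)=S_i$ for $i\ge m+1$, and finish with the same counting argument forcing $U_i\in\add(T')$. The only (immaterial) difference is that you transfer approximations upward from $\elaemod$ via Lemma \ref{Lemma 4.4}, whereas the paper restricts the minimal approximations downward via Theorem \ref{thm.reduction-to-simples-of-infteprojdim}(1).
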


\begin{proof}  By Theorem \ref{thm.reduction-to-simples-of-infteprojdim}(1), 
$\bigoplus_{1 \le i \le m} e \mathcal{A}_i$ is a 
$\pinf(e\Lambda e\text{-mod})$-approximation of the injective cogenerator 
$\bigoplus_{1 \le i \le m} e E(S_i)$ in $\elaemod$, which shows 
that $\bigoplus_{1  \le i \le m} e \mathcal{A}_i$ is a strong tilting 
object in $\elaemod$.  
Since the rank of $K_0(e \Lambda e)$ equals $m$, this implies that $\add (\bigoplus_{1  \le i \le m} e \mathcal{A}_i)$
 contains precisely $m$ isomorphism classes of indecomposable $e \Lambda e$-modules (see Section 2.1). 
 According to Proposition  \ref{prop.explicit calculation of approximation}, the sum $\bigoplus_{1 \le i \le m} \mathcal{A}_i$ in turn belongs to $\mathcal{G}$,
 and therefore the category equivalence
 $\mathcal{G} \cap \lamod \cong e \Lambda  e\text{-mod}$
 guarantees that $\add(\bigoplus_{1 \le i \le m} \mathcal{A}_i)$
 contains the same number of isomorphism classes of indecomposable objects.  
If these are represented by  $T_1, \dots, T_m$, then $e (\bigoplus_{1 \le i \le m} T_i)$ is the basic strong tilting $e \Lambda e$-module by construction.  This proves the first claim.

For the final claims, let $i \ge m+1$. We observe that $S_i = \soc E(S_i)$ is the only simple torsion module which embeds into $\mathcal{A}_i$; indeed, 
this is immediate from the fact that $\Delta(\mathcal{A}_i) \cong \Delta\bigl(E(S_i)\bigr)$ by 
Proposition \ref{prop.explicit calculation of approximation}(c).  Hence precisely one of the indecomposable direct summands of $\mathcal{A}_i$
 contains a copy of $S_i$ in its socle; say $\mathcal{A}_i = T_i \oplus U_i$ with
 $T_i$ indecomposable and $S_i = \Delta(\soc T_i)$, while $\Delta(U_i) = 0$.  Then $\bigoplus_{m+1 \le i \le n} T_i$ consists of $n - m$ pairwise nonisomorphic direct summands in $\add(\mathcal{A})$,
 none of which occurs among $T_1, \dots, T_m$, and we conclude that $T = \bigoplus_{1 \le i \le n} T_i$ up to isomorphism.  Since the $U_i$ are torsionfree and hence do not belong to $\add(\bigoplus_{m+1 \le i \le n} T_i)$, they belong to $\add (\bigoplus_{1 \le i \le m} T_i)$. 

In view of the fact that $T'$ is torsionfree, the final claim follows.
 \end{proof} 

\begin{defi*}
{\rm
  We refer to Proposition \ref{prop.decomposition-strongtilting}. 
 Let $\etilde_i \in \latilde$ be the projection $T \rightarrow T_i \subseteq T$ with respect to the decomposition $T = \bigoplus_{1 \le i \le n} T_i$.  Viewing $\etilde_i$ as an endomorphism of $T$, 
we thus obtain a primitive idempotent in $\latilde$.  We  define $\etilde: = \sum_{1 \le i \le m} \etilde_i \in \latilde$.  Thus $\etilde$ is the projection $T \rightarrow T' \subseteq T$ along $T'' : = \bigoplus_{m+1 \le i \le n} T_i$.  
}
\end{defi*}

\begin{ejem} \label{ex.second return to Ex 2.2}
{\rm(Return to Example \ref{ex.delta.nabla}.)}  Let $\Lambda$ be the algebra of Example \ref{ex.delta.nabla}.  It is readily seen that  $e = e_1 + e_2$ satisfies the conditions of Setting \ref{5.1},  that $e\la e$ is a self-injective algebra and that the indecomposable injective modules $I_1$ and $I_2$ have finite projective dimension.  Thus $T' = T_1 \oplus T_2 = I_1 \oplus I_2$.  Recall  from Proposition \ref{prop.decomposition-strongtilting}  that $T_3$ and $T_4$ are the indecomposable summands of $\mathcal{A}_3\oplus\mathcal{A}_4$ which are not in $\mathcal{F}$. Let us fix  $j\in\{3,4\}$ in the sequel. By Proposition \ref{prop.explicit calculation of approximation} and Remark \ref{rem.conflicting one}, in order to calculate  $\mathcal{A}_j$,  we need to identify the minimal $\mathcal{P}^{<\infty}(\la\mod)$-approximation of $(I_j)_\sigma=S_2\oplus S_2$ (see Example \ref{ex.first return to Ex 2.2}).  Then, by Lemma \ref{Lemma 4.4}, such an approximation is of the form $(1\otimes q)_\sigma\oplus (1\otimes q)_\sigma$, where $q:X\longrightarrow eS_2$ is the minimal   $\mathcal{P}^{<\infty}(e\la e\mod)$-approximation, whence the projective cover $e\la e_2\longrightarrow eS_2$ since $e\la e$ is self-injective. It easily follows that $(1\otimes q)_\sigma $ gets identified with the canonical projection $\rho:(\Lambda e_2)_\sigma =I_1\longrightarrow S_2$. By Proposition \ref{prop.explicit calculation of approximation} and Remark \ref{rem.conflicting one} again, the $\la$-module $\mathcal{A}_j$ is the upper left corner of the pullback of $\rho\oplus\rho: I_1\oplus I_1\longrightarrow S_2\oplus S_2$ and the projection $\mu_{I_j}:I_j\longrightarrow (I_j)_\sigma =S_2\oplus S_2$. 
It then follows that $T_3=\mathcal{A}_3$ is given by the following diagram, and $T_4 = \mathcal{A}_4$ is obtained from $T_3$ by factoring out the copy of $S_3$ in the socle of $T_3$.
$$
\xymatrixrowsep{1.5pc}\xymatrixcolsep{1pc}
\xymatrix{ 
&3\edge[d] &3\edge[d]  & & & &3\edge[d]  &3\edge[d]  \\
T_3 = \mathcal{A}_3:  &4\edge[d] &4\edge[d]  & & & &4\edge[d]  &4\edge[d]  \\
&3\edge[dr] &3\edge[d]  &2\edge[dr]\edge[dl]&3\edge[d]&2\edge[dr]\edge[dl]&3\edge[d]  &3\edge[dl]  \\
&&1  &&4 \edge[d]  &&1  &  \\
& &  & & 3 & &  &  
}$$
\end{ejem}



\begin{teor}\label{Birge5.3}
  Let $\la$ be a basic Artin algebra.  Assume the hypotheses \ref{5.1},  
and let $T$ be the basic strong tilting module in $\lamod$.  If $\latilde = \End_{\la}(T)^{\op}$, 
the corner algebra $\etilde \latilde \etilde$ is isomorphic 
to $\bigl(\End_{e \Lambda e}(e T')\bigr)^{\op}$, where $eT'$ is the basic strong tilting module in $\elaemod$ specified in {\rm Proposition \ref{prop.decomposition-strongtilting}}.

The pair $(\latilde, \etilde)$ satisfies the  right-hand versions of conditions \ref{condition4.1}:  Namely, $(i)$ the right $\latilde$-module $ (1 - \etilde) \latilde/ (1 - \etilde)\Jtilde$ has finite projective dimension, and $(ii)$ the right $\etilde \latilde \etilde$-module $(1 - \etilde) \latilde \etilde$ has finite projective dimension.
\end{teor}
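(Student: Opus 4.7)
My plan is to first identify the corner $\etilde\latilde\etilde$ with $\End_{e\la e}(eT')^{\op}$, and then verify the two finiteness conditions by invoking the tilting duality separately on the two sides: once for the strong tilting module $eT'$ over $e\la e$ (for (ii)), and once for $T$ over $\la$ (for (i)).

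Since $\etilde$ is the projection $T \twoheadrightarrow T' \hookrightarrow T$ relative to $T = T' \oplus T''$, the standard corner calculation gives $\etilde\latilde\etilde \cong \End_\la(T')^{\op}$. By Proposition \ref{prop.decomposition-strongtilting}, $T' \in \mathcal{G}\cap\lamod$, so the Giraud equivalence $\mathbf{e}:\mathcal{G}\cap\lamod \to \elaemod$ of Lemma \ref{2.2}(1) yields $\End_\la(T')^{\op} \cong \End_{e\la e}(eT')^{\op} =: B$. The same block decomposition of $\latilde = \End_\la(T'\oplus T'')^{\op}$, extended to the off-diagonal blocks, identifies the right $\etilde\latilde\etilde$-module $(1-\etilde)\latilde\etilde$ with $\Hom_\la(T'', T')$ (right action by post-composition); and since $T' \in \mathcal{G}$, the adjunction $(\sigma,\iota)$ together with the Giraud equivalence produces the further identification $\Hom_\la(T'', T') \cong \Hom_{e\la e}(eT'', eT')$ of right $B$-modules. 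To deduce (ii), note that $eT'' \in \pinf(\elaemod)$ by Lemma \ref{lem.preserving-reflecting finprojdim} (since $T'' \in \pinf(\lamod)$ as a summand of the tilting module $T$), while $eT'$ is the basic strong tilting module over $e\la e$ by Proposition \ref{prop.decomposition-strongtilting}. The tilting duality of Corollary \ref{cor.dualities-by-strongtilting}, applied to the pair $(e\la e, eT')$, places $\Hom_{e\la e}(eT'', eT')$ into $\pinf(\mbox{mod-}B)$, which is exactly condition (ii).

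For (i), it suffices to show that $\widetilde{S}_i \in \pinf(\modlatilde)$ for each $i$ with $m+1 \le i \le n$. My key step is the identification $\Hom_\la(S_i, T) \cong \widetilde{S}_i$ as right $\latilde$-modules, which I will extract from the socle structure in Proposition \ref{prop.decomposition-strongtilting}: since $\Delta(\soc T_j) = S_j$ for $j \ge m+1$ while $T_j \in \mathcal{F}$ for $j \le m$, any map $S_i \to T$ factors through the unique copy of $S_i$ inside $\soc T_i$, forcing the $\etilde_j$-components of $\Hom_\la(S_i, T)$ to vanish for $j \ne i$. To see that the radical of the local ring $\etilde_i\latilde\etilde_i \cong \End_\la(T_i)^{\op}$ acts trivially, I will use the nilpotency of any radical $\phi \in \End_\la(T_i)$: such $\phi$ preserves the torsion submodule $S_i \subseteq \soc T_i$, so $\phi|_{S_i}$ is a nilpotent element of the division ring $\End_\la(S_i)$ and hence zero. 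Once this is in hand, condition \ref{5.1}(i) gives $\pdim_\la S_i < \infty$, and together with $T_i \in \pinf(\lamod)$ this yields $T_i/S_i \in \pinf(\lamod)$. Applying $\Hom_\la(-, T)$ — exact on $\pinf(\lamod)$ because $T$ is strong — to the sequence $0 \to S_i \to T_i \to T_i/S_i \to 0$ produces
\[
0 \to \Hom_\la(T_i/S_i, T) \to \etilde_i\latilde \to \widetilde{S}_i \to 0,
\]
whose left-hand term lies in $\pinf(\modlatilde)$ by the tilting duality applied to $T$, forcing $\widetilde{S}_i \in \pinf(\modlatilde)$ as required.

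The main obstacle is the identification $\Hom_\la(S_i, T) \cong \widetilde{S}_i$: it requires both the precise socle data of Proposition \ref{prop.decomposition-strongtilting} and the nilpotency argument that rules out any nontrivial radical action. Once that point is settled, the rest of the argument is a clean double invocation of the tilting duality — on $e\la e$ for (ii) and on $\la$ for (i).
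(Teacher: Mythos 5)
Your proof is correct, and for the corner isomorphism and for condition (i) it follows essentially the paper's own route: the paper likewise reduces (i) to identifying $\Hom_\la(S_i,T)$ with $\etilde_i\latilde/\etilde_i\Jtilde$ for $i\ge m+1$, using exactly the socle data of Proposition \ref{prop.decomposition-strongtilting} (the copy $S_i=\Delta(\soc T_i)$ is the only torsion simple of its type in $\soc T$) together with annihilation by $\Jtilde$, and then quotes the strong-tilting duality to get $\pdim_{\latilde}\Hom_\la(S_i,T)<\infty$; your exact sequence $0\to\Hom_\la(T_i/S_i,T)\to\etilde_i\latilde\to\Hom_\la(S_i,T)\to 0$ is a mild detour, with the incidental benefit that it makes the question of whether $\Hom_\la(S_i,T)$ is simple or merely $\widetilde{S}_i$-isotypic irrelevant. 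Where you genuinely differ is condition (ii): the paper reduces $(1-\etilde)\latilde\etilde\cong\Hom_\la(T'',T')$ to the modules $\Hom_\la(\mathcal{A}_i,T')$ for $i\ge m+1$, replaces $\mathcal{A}_i$ by the approximation $\widehat{\mathcal{A}}_i$ of $E(S_i)_\sigma$ via Proposition \ref{prop.explicit calculation of approximation}(c), exactness of $\Hom_\la(-,T')$ on $\pinf(\lamod)$ and torsionfreeness of $T'$, and only then passes to $e\la e$; you instead exploit $T'\in\mathcal{G}$ (Proposition \ref{prop.decomposition-strongtilting}) to identify $\Hom_\la(T'',T')\cong\Hom_{e\la e}(eT'',eT')$ outright and then use Lemma \ref{lem.preserving-reflecting finprojdim} to place $eT''$ in $\pinf(\elaemod)$. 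This is legitimate and in fact shorter---restriction to $e$ is injective because a map vanishing on $eT''$ factors through the torsion module $T''/\nabla(T'')$, and surjective because $\Ext^1_\la\bigl(T''/\nabla(T''),T'\bigr)=0$ by the restricted injectivity defining $\mathcal{G}$---but you should spell out this verification, since your proposal only asserts the isomorphism. One citation should be adjusted: the fact you need, that $\Hom_{e\la e}(-,eT')$ (and $\Hom_\la(-,T)$) carries objects of finite projective dimension to objects of finite projective dimension over the opposite endomorphism algebra, is not properly covered by Corollary \ref{cor.dualities-by-strongtilting}, whose standing hypotheses include contravariant finiteness of $\pinf$ on the tilted side (unavailable here, and circular in the $e\la e$ case); cite instead the one-sided duality results of \cite{Birgedualities} or \cite{HuisgenZimmermann-Saorin}, exactly as the paper does.
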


\begin{proof}
The isomorphism $\etilde \latilde \etilde \cong \End_{\la}(T')^{\op} \cong \bigl(\End_{e \Lambda e} (e T')\bigr)^{\op}$ is 
immediate from Proposition \ref{prop.decomposition-strongtilting} and the comments that precede it.  As above, suppose
 that $e = e_1 + \cdots + e_m$ and $1 - e = e_{m+1} + \cdots + e_n$, where the $e_i$ are primitive.

Regarding condition (i) of the claim:  By strongness of $_{\la} T$, 
the functor $\Hom_{\la}( - , T):  \lamod \rightarrow \modlatilde$ 
induces a contravariant equivalence between $\pinf (\lamod)$
 and a
 certain subcategory
 of $\pinf(\modlatilde)$, whence $\pdim_{\latilde} \Hom_{\la}(S_i, T) < \infty$ 
for all $i \ge m
+1$ (see \cite [Reference Theorem III and Theorem 1]{Birgedualities} or \cite [Theorems 7,8]{HuisgenZimmermann-Saorin}).  So we only need to show that  the right $
\latilde$-module $\Hom_{\la}(S_i, T)$ is isomorphic
 to $\etilde_i \latilde/ \etilde_i \Jtilde$ for $m+1 \le i \le n$.  Let $i \ge 
m+1$.  In light of Proposition \ref{prop.decomposition-strongtilting}, $S_i = \Delta(\soc T_i)$ is the only occurrence of $S_i$ in the socle of $T$, and 
therefore any homomorphism $S_i \rightarrow T$ maps $S_i$ onto $\Delta(\soc T_i)$. If $\text{in}_i: S_i 
\hookrightarrow T_i \subseteq T$ is an embedding, we thus find that $\Hom_{\la}(S_i, T) \cong \text{in}_i\, \latilde$.  
One checks that the latter module is annihilated by the radical $\Jtilde$ of $\latilde$, but not annihilated by $\etilde_i
$, and concludes that 
$\Hom_{\la}(S_i, T) \cong \etilde_i \latilde/ \etilde_i \Jtilde$ as postulated.  

To verify condition (ii), namely finiteness of $\pdim_{\etilde \latilde \etilde}\, (1 - \etilde) \latilde \etilde$,
 we again use Proposition \ref{prop.decomposition-strongtilting}. The isomorphism $\etilde \latilde \etilde \cong \End_{\la}(T')^{\op}$ 
shows that our claim 
amounts to finite projective dimension of $\Hom_{\la} (T'', T')$ over $\Gamma = \End_{\la}(T')^{\op}$.   
Since $\add(T'')\linebreak 
\subseteq \add\bigl(\bigoplus_{m+1 \le i \le n} \mathcal{A}_i\bigr)$
 by the  proposition, it suffices to show 
that $\pdim _{\Gamma} \, \Hom_{\la} (\mathcal{A}_i, T')\linebreak < \infty$  for $i \ge m+1$.
  Fix $i \ge m+1$ in the following. Towards another reduction 
step, we apply Proposition \ref{prop.explicit calculation of approximation} to a minimal $\pinf (\lamod)$-approximation
 $\widehat{\mathcal{A}}_i$ of $E(S_i)_\sigma$. Part (c) of 
 \ref{prop.explicit calculation of approximation} provides us with
 a map $\rho:  \mathcal{A}_i \rightarrow \widehat{\mathcal{A}}_i$ such that
 both  $\Ker(\rho)$ and $\Coker(\rho)$ 
belong to $\mathcal{T}$. 
 Since the restriction of the functor $\Hom_{\la}(-, T)$ to $\pinf(\lamod)$ is exact, so 
is $\Hom_{\la}(-, T')_{|_{\pinf(\lamod)}}$, and consequently torsionfreeness of $T'$ 
shows $\Hom_{\la}(\rho, T'): \Hom_{\la}(\widehat{\mathcal{A}}_i, 
T') \rightarrow \Hom_{\la}(\mathcal{A}_i, T')$ 
to be an isomorphism of right $\Gamma$-modules.  Lemma \ref{lem.preserving-reflecting finprojdim} moreover ensures 
that $e \widehat{\mathcal{A}}_i$ has finite 
projective dimension in $\elaemod$.  Returning to the category 
equivalence $\mathcal{G} \cap (\lamod) \approx \elaemod$ 
which 
 sends $M$ to $eM$ (cf.  Lemma \ref{2.2}), 
we thus find finiteness of $\pdim_{\Gamma} \Hom_{\la}(\widehat{\mathcal{A}}_i, T')$ 
to be equivalent to finiteness of the 
projective dimension of
 $\Hom_{e \Lambda e} (e \widehat{\mathcal{A}}_i, e T')$ over $\End_{e \Lambda e}( e T')^{\op}$ alias $\etilde \latilde 
\etilde$.  Given that  $\Hom_{e \Lambda e}(- ,  e T')$ takes objects in $\pinf(\elaemod)$ to objects in $\pinf(\modetillatiletil)$ [loc.cit., applied to $e \Lambda e$ in place of $\la$], condition (ii) follows.
\end{proof}

Theorem \ref{Birge5.3} ensures  that, with a ``duplicate" in $\latilde$ of the original idempotent $e \in \la$, the test provided by Theorem \ref{thm.reduction-to-simples-of-infteprojdim} is again available towards deciding whether $\pinf(\modlatilde)$ is contravariantly finite in $\modlatilde$. 

\begin{cor} \label{Birge5.4}
 Adopt the hypotheses and notation of Theorem \ref{Birge5.3}.  Then contravariant finiteness of $\pinf(\modlatilde)$ in $\modlatilde$ is equivalent to
  contravariant finiteness of $\pinf(\modetillatiletil)$ in\ $\modetillatiletil$.  \end{cor}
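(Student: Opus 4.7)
The plan is to derive Corollary \ref{Birge5.4} as a direct consequence of Theorem \ref{Birge5.3} combined with Theorem \ref{thm.reduction-to-simples-of-infteprojdim}, invoking the latter with the sides of the modules exchanged. Concretely, I would apply Theorem \ref{thm.reduction-to-simples-of-infteprojdim} to the opposite algebra $\latilde^{\op}$ together with the idempotent $\etilde \in \latilde^{\op}$. Under the usual identifications, $\latilde^{\op}\text{-}\mod$ is the same category as $\modlatilde$, and, since $(\etilde\latilde^{\op}\etilde)^{\op} = \etilde\latilde\etilde$, the category $(\etilde\latilde^{\op}\etilde)\text{-}\mod$ coincides with $\modetillatiletil$. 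Contravariant finiteness is evidently preserved under these identifications.

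In order for Theorem \ref{thm.reduction-to-simples-of-infteprojdim} to apply to the pair $(\latilde^{\op}, \etilde)$, the two blanket conditions of Setting \ref{condition4.1} must be checked. Spelled out for $\latilde^{\op}$, they read: the left $\latilde^{\op}$-module $\latilde^{\op}(1 - \etilde)/\Jtilde^{\op}(1 - \etilde)$ has finite projective dimension, and the left $\etilde\latilde^{\op}\etilde$-module $\etilde\latilde^{\op}(1 - \etilde)$ has finite projective dimension. Rephrased as right-module statements over $\latilde$ and $\etilde\latilde\etilde$ respectively, these are exactly the right-hand conditions (i) and (ii) furnished by Theorem \ref{Birge5.3}.

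With the hypotheses thus verified, the conclusion of Theorem \ref{thm.reduction-to-simples-of-infteprojdim} applied to $(\latilde^{\op}, \etilde)$ is precisely the asserted equivalence, namely that $\pinf(\modlatilde)$ is contravariantly finite in $\modlatilde$ if and only if $\pinf(\modetillatiletil)$ is contravariantly finite in $\modetillatiletil$. There is essentially no obstacle beyond the bookkeeping required to match the left-module formulation of Theorem \ref{thm.reduction-to-simples-of-infteprojdim} with the right-module setting at hand; the actual content of the corollary has already been absorbed into Theorem \ref{Birge5.3}, which is the genuine piece of work. In effect, Theorem \ref{Birge5.3} plays for $(\latilde, \etilde)$ the role that the two standing hypotheses (i), (ii) of Setting \ref{condition4.1} played for $(\Lambda, e)$, and the present corollary records the resulting applicability of the reduction criterion.
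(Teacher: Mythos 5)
Your proposal is correct and matches the paper's intended argument: the corollary is exactly the right-module (i.e., opposite-algebra) instance of Theorem \ref{thm.reduction-to-simples-of-infteprojdim} applied to the pair $(\latilde,\etilde)$, whose hypotheses are precisely the conditions (i) and (ii) verified in Theorem \ref{Birge5.3}. Your bookkeeping translating the left-module conditions of Setting \ref{condition4.1} for $\latilde^{\op}$ into the right-hand conditions over $\latilde$ and $\etilde\latilde\etilde$ is accurate.
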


In light of the fact that the basic strong tilt $\widetilde{e \Lambda e}$ of $e \Lambda e$ coincides with
 $\etilde \latilde \etilde$,  
we thus obtain:  Not only is existence of a strong tilting object 
in
 $\lamod$ 
equivalent to existence of a strong tilting object in $\elaemod$ under  conditions \ref{condition4.1},
 but the same hypothesis implies that  $\lamod$
 allows for arbitrary repetitions of strong tilting precisely when this is true for $e \Lambda e$-mod.  This conclusion compiles information from Theorems \ref{teor.strong-tilting-iteration}, \ref{thm.reduction-to-simples-of-infteprojdim} and Corollary \ref{Birge5.4}.  

\begin{cor} \label{cor.iteration-stongtilting} 
 Let $\la$ be a basic Artin algebra and $e \in \la$ an idempotent 
satisfying the hypotheses $4.1$ $($i.e., all simple left $\la$-modules of infinite projective dimension belong
 to 
$\add(\Lambda e/ J e)$,
 and $\pdim_{e \Lambda e} e \Lambda (1- e) < \infty$$)$.  
Then the category $\lamod$ allows 
for unlimited iteration of strong tilting if and only if  the same is true for the category $\elaemod$.
\end{cor}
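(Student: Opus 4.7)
The plan is to reformulate ``$\lamod$ allows for unlimited iteration of strong tilting" as a conjunction of two contravariant-finiteness conditions, and then to transfer each of them to the corner algebra by invoking the reduction theorems of Sections 4 and 5 in turn.

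First, by Theorem \ref{teor.strong-tilting-iteration}, $\lamod$ allows for unlimited iteration of strong tilting if and only if the following two conditions hold simultaneously: (a) $\pinf(\lamod)$ is contravariantly finite in $\lamod$, so that the basic strong tilting module $_\la T$ and its endomorphism algebra $\latilde = \End_\la(T)^{\op}$ are defined, and (b) $\pinf(\modlatilde)$ is in turn contravariantly finite in $\modlatilde$.  The analogous characterization, applied to the corner $e\la e$, expresses unlimited iteration for $\elaemod$ as contravariant finiteness of $\pinf(\elaemod)$ in $\elaemod$ together with the corresponding right-module condition for the basic strong tilt $\widetilde{e\la e}$ of $e\la e$.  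Thus I need to verify that each of the two conditions for $\la$ is equivalent to its counterpart for $e\la e$.

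For the first condition, the hypotheses \ref{condition4.1} are in force by assumption, so Theorem \ref{thm.reduction-to-simples-of-infteprojdim} immediately gives
\[
\pinf(\lamod) \text{ is contrav.\ finite in } \lamod \ \Longleftrightarrow\ \pinf(\elaemod) \text{ is contrav.\ finite in } \elaemod .
\]
Assume these equivalent properties hold.  Then Setting \ref{5.1} is in force and Theorem \ref{Birge5.3} applies: letting $\etilde \in \latilde$ be the idempotent associated, as in Proposition \ref{prop.decomposition-strongtilting}, to the direct summand $T'$ of $T$, one has $\etilde\latilde\etilde \cong \End_{e\la e}(eT')^{\op}$, which is precisely the basic strong tilt $\widetilde{e\la e}$ of the corner algebra; moreover the pair $(\latilde,\etilde)$ satisfies the right-hand versions of conditions \ref{condition4.1}.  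Corollary \ref{Birge5.4}---which is the right-module incarnation of Theorem \ref{thm.reduction-to-simples-of-infteprojdim} applied to $(\latilde,\etilde)$---then supplies
\[
\pinf(\modlatilde) \text{ is contrav.\ finite in } \modlatilde \ \Longleftrightarrow\ \pinf(\modetillatiletil) \text{ is contrav.\ finite in } \modetillatiletil ,
\]
and under the identification $\etilde\latilde\etilde \cong \widetilde{e\la e}$ the right-hand side is precisely condition (b) for the corner algebra.

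Combining the two equivalences, $\la$ satisfies (a) and (b) simultaneously if and only if $e\la e$ does, and the corollary follows from the reformulation made at the outset.  I do not expect any genuine obstacle to remain at this stage of the paper: all of the substantive content has already been deposited into Theorem \ref{Birge5.3}, whose role is precisely to guarantee that the hypotheses of \ref{condition4.1} are inherited by the tilted data $(\latilde,\etilde)$, so that the Section~4 reduction can be applied a second time on the right-hand side.  The corollary itself is then a bookkeeping step threading Theorems \ref{teor.strong-tilting-iteration}, \ref{thm.reduction-to-simples-of-infteprojdim}, \ref{Birge5.3} and Corollary \ref{Birge5.4}.
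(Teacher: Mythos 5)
Your proposal is correct and follows essentially the same route as the paper: the paper's own justification is precisely the compilation of Theorem \ref{teor.strong-tilting-iteration} (reducing unlimited iteration to the two contravariant-finiteness conditions), Theorem \ref{thm.reduction-to-simples-of-infteprojdim} (transferring the left-hand condition to $e\la e$), and Theorem \ref{Birge5.3} together with Corollary \ref{Birge5.4} (identifying $\etilde\latilde\etilde$ with $\widetilde{e\la e}$ and transferring the right-hand condition). Your write-up merely makes explicit the bookkeeping that the paper leaves implicit.
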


  A first straightforward application of our techniques shows that, in testing for contravariant finiteness of $\pinf(\lamod)$ or iterability of strong tilting, we may automatically discard the idempotents corresponding to the simple left $\la$-modules of projective dimension at most $1$. Namely: 

\begin{prop} \label{prop.elimination-simples-projdim1}
Let $\Lambda$ be a basic Artin algebra on which we fix the complete set of primitive idempotents $\{e_1,...,e_r,e_{r+1},...,e_n\}$, ordered in such a way that $\pdim(\Lambda e_i/Je_i)\leq 1$ for $i>r$. For $e =\sum_{i=1}^re_i$, the following assertions are equivalent: 

\begin{enumerate}
\item $\mathcal{P}^{<\infty}(\lamod)$ is contravariantly finite in $\lamod$ $($resp., $\lamod$ allows for unlimited iteration of strong tilting$)$;
\item $\mathcal{P}^{<\infty}(\elaemod)$ is contravariantly finite in $\elaemod$ $($resp., $\elaemod$ allows for unlimited iteration of strong tilting$)$.
\end{enumerate}
\end{prop}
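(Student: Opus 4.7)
My plan is to reduce both equivalences in the proposition to direct applications of Theorem~\ref{thm.reduction-to-simples-of-infteprojdim} (for the contravariant finiteness equivalence) and Corollary~\ref{cor.iteration-stongtilting} (for the iteration equivalence); both of these results require only that the idempotent $e = e_1 + \cdots + e_r$ satisfy the two blanket conditions of Setting~\ref{condition4.1}. The entire proof thus reduces to verifying \ref{condition4.1}(i) and \ref{condition4.1}(ii) for this specific choice of $e$.

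Condition \ref{condition4.1}(i) is immediate, since $\Lambda(1-e)/J(1-e) \cong \bigoplus_{i > r} S_i$ and by hypothesis each $S_i$ with $i > r$ has projective dimension at most $1$ over $\Lambda$. The heart of the argument is condition \ref{condition4.1}(ii), i.e.\ that $e\Lambda(1-e) = \bigoplus_{i > r} e\Lambda e_i$ has finite projective dimension as a left $e\Lambda e$-module. For each $i > r$, the assumption $\pdim_\Lambda S_i \leq 1$ forces the syzygy $Je_i$ to be projective over $\Lambda$, so $Je_i \cong \bigoplus_{j=1}^{n} (\Lambda e_j)^{n_{ij}}$ for suitable non-negative integers $n_{ij}$. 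Applying the exact functor $\mathbf{e} = e(-)$ to the short exact sequence $0 \to Je_i \to \Lambda e_i \to S_i \to 0$ and using that $eS_i = 0$ for $i > r$, one obtains
\[
e\Lambda e_i \;\cong\; eJe_i \;\cong\; \bigoplus_{j=1}^{n} (e\Lambda e_j)^{n_{ij}}
\]
in $\elaemod$.

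The main step --- and the only non-routine point --- is then a strong induction on $\text{length}(\Lambda e_i)$. For $j \leq r$, $e\Lambda e_j$ is projective over $e\Lambda e$ as a direct summand of $_{e\Lambda e}\,e\Lambda e$. For $j > r$, the key observation is that $\Lambda e_j$, being a direct summand of the proper submodule $Je_i$ of $\Lambda e_i$, has length strictly less than that of $\Lambda e_i$; consequently the inductive hypothesis yields $\pdim_{e\Lambda e} e\Lambda e_j < \infty$. The strict length decrease precludes cycles in this recursion, so the induction terminates and one concludes $\pdim_{e\Lambda e} e\Lambda e_i < \infty$ for every $i > r$. With conditions \ref{condition4.1}(i) and (ii) in hand, Theorem~\ref{thm.reduction-to-simples-of-infteprojdim} and Corollary~\ref{cor.iteration-stongtilting} each deliver one of the two claimed equivalences without further work.
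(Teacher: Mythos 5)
Your proposal is correct, and its skeleton coincides with the paper's: verify the two conditions of Setting \ref{condition4.1} for $e=e_1+\cdots+e_r$ and then invoke Theorem \ref{thm.reduction-to-simples-of-infteprojdim} and Corollary \ref{cor.iteration-stongtilting}. The only divergence is in the key step, condition \ref{condition4.1}(ii): the paper disposes of it in one line by citing the proof of \cite[Proposition 2.1]{FS}, which shows that $e\Lambda(1-e)$ is actually \emph{projective} as a left $e\Lambda e$-module, whereas you give a self-contained argument. Your argument is sound: $\pdim_\Lambda S_i\le 1$ makes $Je_i$ projective, applying the exact functor $\mathbf{e}$ to $0\to Je_i\to\Lambda e_i\to S_i\to 0$ with $eS_i=0$ gives $e\Lambda e_i\cong eJe_i\cong\bigoplus_j(e\Lambda e_j)^{n_{ij}}$, and the strong induction on $\mathrm{length}(\Lambda e_i)$ is legitimate because every summand $\Lambda e_j$ of $Je_i$ with $j>r$ has strictly smaller length, while the summands with $j\le r$ contribute direct summands of ${}_{e\Lambda e}e\Lambda e$. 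What your version buys is independence from the external reference; what it gives up is negligible, and in fact if you strengthen your inductive claim from ``finite projective dimension'' to ``projective over $e\Lambda e$'' the identical induction recovers the full Fuller--Saor\'{\i}n statement used by the paper. One small presentational point: condition \ref{condition4.1}(i) is indeed immediate, but it is worth saying explicitly that it holds because $\pdim_\Lambda S_i\le 1<\infty$ for $i>r$, exactly as you did.
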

\begin{proof}
From the proof of \cite[Proposition 2.1]{FS}, we know that $e\Lambda (1-e)$ is projective as a left $e\Lambda e$-module. Thus the pair $(\lamod , e)$ satisfies the blanket hypotheses of  Setting \ref{set.initial setting}.  Assertions 1 and 2 are thus consequences of Theorem \ref{thm.reduction-to-simples-of-infteprojdim} and Corollary \ref{cor.iteration-stongtilting}, respectively. 
\end{proof}

\section { Applications and examples}
By way of the equivalences established in the previous sections, we can now easily secure contravariant finiteness of $\pinf (\lamod) $ and 
$\pinf(\modlatilde)$ in cases in which this originally required a considerable effort.  
Moreover, the unifying reasons behind these results become more transparent through the reduction and permit us to expand the settings to which they apply.

\subsection{Precyclic/postcyclic vertices and normed Loewy lengths}

An instance in which the simplification gained by reduction to corner algebras stands out is that of truncated path algebras and their strong tilts (see \cite{Birgedualities} and \cite{HuisgenZimmermann-Saorin}); without a reduction technique, it is challenging to confirm that $\lamod$ allows for unlimited iteration of strong tilting in this case. 

In light of Theorem \ref{thm.reduction-to-simples-of-infteprojdim}, the first step, namely to confirm contravariant finiteness of 
$\pinf (\lamod)$ for truncated $\la$, has now been trivialized.  To generalize it, recall that, given any path algebra modulo relations, $\la = KQ/I$, a vertex $e_i$ of $Q$ (systematically identified with a primitive idempotent of $\la$) is called {\it precyclic\/} if there exists a path of length $\ge 0$ which starts in $e_i$ and ends on an oriented cycle; the attribute {\it postcyclic\/} is dual, and $e_i$ is called {\it critical\/} if it is both pre- and postcyclic.  Clearly, all vertices which give rise to simple modules of infinite projective dimension are among the precyclic ones; moreover $e_i \Lambda e_j =  0$ whenever $e_i$ is precyclic, but $e_j$ is not.  Theorem 4.6 thus yields

\begin{prop} \label{prop.precyclic-contrfiniteness}
Let $\la = KQ/I$ be an arbitrary path algebra modulo relations,and let $e$ be the sum of the primitive idempotents corresponding to the precyclic vertices of $Q$.  
If $\pinf(\elaemod)$ is contravariantly finite in $\elaemod$, e.g., if 
$\lfindim e \Lambda e = 0$, then $\pinf (\lamod)$ is contravariantly finite in $\lamod$.
\end{prop}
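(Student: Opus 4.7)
The plan is to deduce the proposition directly from Theorem \ref{thm.reduction-to-simples-of-infteprojdim} applied to the idempotent $e$ corresponding to the precyclic vertices of $Q$. To do so I need to verify the two blanket hypotheses of Setting \ref{set.initial setting}: (i) $\Lambda(1-e)/J(1-e)$ has finite projective dimension as a left $\la$-module, and (ii) $e\Lambda(1-e)$ has finite projective dimension as a left $e\Lambda e$-module.

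Condition (i) is immediate from the remark recalled just before the statement of the proposition: every simple left $\la$-module of infinite projective dimension belongs to $\add(\Lambda e/Je)$, i.e.\ corresponds to a precyclic vertex. Consequently the semisimple module $\Lambda(1-e)/J(1-e)$ decomposes as a finite direct sum of simples $S_j$ each of finite projective dimension, whence $\pdim_\la \Lambda(1-e)/J(1-e) < \infty$.

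For condition (ii) I would invoke the combinatorial fact quoted in the excerpt that $e_i\Lambda e_j = 0$ whenever $e_i$ is precyclic and $e_j$ is not (this is the contrapositive of the observation that a path from $e_j$ to $e_i$ composed with a witness of precyclicity at $e_i$ exhibits $e_j$ as precyclic). Summing over the relevant index pairs makes $e\Lambda(1-e)$ collapse to $0$ as an $e\Lambda e$-module, which trivially has finite projective dimension.

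With both blanket conditions verified, Theorem \ref{thm.reduction-to-simples-of-infteprojdim} yields the equivalence between contravariant finiteness of $\pinf(\lamod)$ in $\lamod$ and of $\pinf(\elaemod)$ in $\elaemod$, delivering the main assertion. The parenthetical case is also painless: if $\lfindim e\Lambda e = 0$ then $\pinf(\elaemod) = \add({_{e\Lambda e} e\Lambda e})$, and projective covers already serve as $\pinf(\elaemod)$-approximations of arbitrary finitely generated $e\Lambda e$-modules, so the hypothesis is satisfied. I do not foresee any genuine obstacle here; the only step that deserves a moment's pause is the combinatorial vanishing in condition (ii), which is however essentially built into the definition of precyclic.
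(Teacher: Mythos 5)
Your proposal is correct and follows essentially the same route as the paper: the paper's (compressed) argument likewise verifies Setting \ref{set.initial setting} by noting that all simples of infinite projective dimension sit at precyclic vertices and that $e\Lambda(1-e)=0$ since $e_i\Lambda e_j=0$ when $e_i$ is precyclic and $e_j$ is not, and then invokes Theorem \ref{thm.reduction-to-simples-of-infteprojdim}. Your handling of the parenthetical case $\lfindim e\Lambda e=0$ via projective covers is also the standard justification.
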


For truncated $\la$, the final condition concerning the left finitistic dimension of $e \Lambda e$ is clearly satisfied since all indecomposable projective left $e \Lambda e$-modules $e \Lambda e_i$ for precyclic $e_i$ have the same Loewy length; this is not necessarily true for the indecomposable injective left $e \Lambda e$-modules, but it is for those whose socles correspond to critical vertices; namely, if $e'$ is the sum of the critical vertices of $Q$, then the indecomposable injective left $e' \Lambda e'$-modules also have coinciding Loewy lengths in the truncated case. The combination of these two conditions which norm the  Loewy lengths of certain projective or injective modules is, in fact, all that is needed to guarantee that $\lamod$ allows for iterated strong tilting. 

\begin{prop} \label{prop.precyclic-stiltingiteration}
Again, let $\la = KQ / I$ be a path algebra modulo relations and $e$ the sum of the primitive idempotents corresponding to the precyclic vertices of $Q$.  Moreover, let $e'$ be the sum of those idempotents which correspond to the critical vertices.  \smallskip 

Suppose that all indecomposable projective left $e \Lambda e$-modules have the same Loewy length, and that the analogous equality holds for the Loewy lengths of the indecomposable injective left $e' \Lambda e'$-modules.  Then $\lamod$ allows for unlimited iteration of strong tilting, thus giving rise to a sequence of related module categories 
$\lamod\, \rightsquigarrow\, \modlatilde\, \rightsquigarrow\, \latiltilmod\, \rightsquigarrow \cdots$.  
Starting with the first strong tilt, 
$\modlatilde$, 
the Morita equivalence classes of these categories repeat periodically with period $2$.
\end{prop}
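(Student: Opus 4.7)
The strategy is to reduce twice: first from $\Lambda$ to the corner $\Gamma := e\Lambda e$, then within $\Gamma$ to the critical sub-corner $e'\Gamma e' = e'\Lambda e'$. The initial reduction uses Corollary~\ref{cor.iteration-stongtilting}: condition~\ref{condition4.1}(i) for $(\Lambda, e)$ holds because every simple left $\Lambda$-module of infinite projective dimension sits on a precyclic vertex, and condition~\ref{condition4.1}(ii) is automatic since $e\Lambda(1-e)=0$, no path in $Q$ being able to end at a non-precyclic vertex after leaving a precyclic one. The iterability question for $\Lambda\text{-mod}$ is thus equivalent to the same question for $\Gamma\text{-mod}$, and by Theorem~\ref{teor.strong-tilting-iteration} this amounts to contravariant finiteness of $\pinf(\Gamma\text{-mod})$ in $\Gamma\text{-mod}$ and of $\pinf(\text{mod-}\widetilde{\Gamma})$ in $\text{mod-}\widetilde{\Gamma}$.

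The projective Loewy-length hypothesis on $\Gamma$ handles the first condition via a standard syzygy argument yielding $\lfindim(\Gamma)=0$. If some $M$ had $\pdim_\Gamma M = d \geq 1$ with minimal projective resolution $\cdots \to P_1 \to P_0 \to M \to 0$, then $\Omega^{d-1}M \cong P_d$ would be a nonzero direct sum of indecomposable projectives of Loewy length $L$, whereas by minimality it sits inside $JP_{d-2}$ (or $JP_0$ when $d=1$), of Loewy length at most $L-1$, a contradiction. Hence $\pinf(\Gamma\text{-mod}) = \add(\Gamma)$ is trivially contravariantly finite, the basic strong tilting module is $T = {_\Gamma}\Gamma$, and $\widetilde{\Gamma} \cong \Gamma$ as basic algebras; so the remaining contravariant-finiteness condition is that of $\pinf(\text{mod-}\Gamma)$ in $\text{mod-}\Gamma$.

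For the latter I would apply the right-module counterpart of Theorem~\ref{thm.reduction-to-simples-of-infteprojdim} to $(\Gamma, e')$, noting that $e'$ is an idempotent of $\Gamma$ since $e' \leq e$. The right-sided analogue of~\ref{condition4.1}(i) holds because simple right $\Gamma$-modules of infinite projective dimension correspond to vertices postcyclic in $\Gamma$, and among precyclic vertices of $\Lambda$ being postcyclic in $\Gamma$ is the same as being critical in $\Lambda$. The right-sided analogue of~\ref{condition4.1}(ii) is again trivial: $(1-e')\Gamma e' = 0$, for otherwise we would have a path in $Q$ from a critical vertex $w$ to a non-critical precyclic vertex $v$, and concatenating with the cycle through $w$ would force $v$ to be postcyclic, contradicting its non-criticality. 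The reduction thus replaces the problem by contravariant finiteness of $\pinf(\text{mod-}e'\Lambda e')$ in $\text{mod-}e'\Lambda e'$. By the standard duality $D$, the injective Loewy-length hypothesis on $e'\Lambda e'$ is equivalent to uniform Loewy length of indecomposable projective \emph{right} $e'\Lambda e'$-modules, and applying the syzygy argument to the opposite algebra gives $\rfindim(e'\Lambda e') = 0$. Consequently $\pinf(\text{mod-}e'\Lambda e') = \add((e'\Lambda e')_{e'\Lambda e'})$, trivially contravariantly finite.

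With both contravariant-finiteness conditions in place for $\Gamma$, Theorem~\ref{teor.strong-tilting-iteration} delivers unlimited iteration of strong tilting for $\Gamma\text{-mod}$ together with period-$2$ periodicity of the resulting sequence of Morita equivalence classes starting from $\widetilde{\Gamma}$; pulling back via Corollary~\ref{cor.iteration-stongtilting} yields the statement for $\Lambda\text{-mod}$. The main technical subtlety I anticipate is careful bookkeeping of sidedness: the right-module counterpart of Theorem~\ref{thm.reduction-to-simples-of-infteprojdim} is not stated verbatim in the paper, so one must either re-run its proof on $\Gamma^{\op}$ or invoke $\op$-duality to transfer it; and the identification of ``postcyclic in $\Gamma$'' with ``critical in $\Lambda$'' must be matched against the path-composition conventions used when asserting the two vanishings $e\Lambda(1-e) = 0$ and $(1-e')\Gamma e' = 0$.
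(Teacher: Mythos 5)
Your proposal is correct and takes essentially the same route as the paper: reduce the iteration question to $e\la e$ (the paper phrases this via Proposition \ref{prop.precyclic-contrfiniteness}, Theorem \ref{Birge5.3} and Corollary \ref{cor.iteration-stongtilting}), use the uniform Loewy length of the projective left $e\la e$-modules to get $\lfindim e\la e=0$, hence $\widetilde{e\la e}\cong e\la e$, and then obtain contravariant finiteness of $\pinf(\modelae)$ from the right-hand version of Theorem \ref{thm.reduction-to-simples-of-infteprojdim} applied to $e'$, with $\rfindim e'\la e'=0$ following by duality from the injective Loewy-length hypothesis, exactly as in the paper. Two harmless slips you should fix: under the paper's conventions $e\la (1-e)=0$ holds because no path can run \emph{from} a non-precyclic vertex \emph{to} a precyclic one (paths from precyclic to non-precyclic vertices may well exist), and in the syzygy argument the contradiction comes from $\Omega^{d}M\cong P_d\subseteq JP_{d-1}$ rather than $\Omega^{d-1}M$.
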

\begin{proof}
Suppose that $e =  e_1 + \cdots + e_m$ and $e' =  e_1 + \cdots + e_r$ for some $r \le m$.  Since
 $\lfindim e \Lambda e = 0$ due to the first condition on Loewy lengths, $\pinf(\elaemod)$ is contravariantly finite in $\elaemod$.  Proposition \ref{prop.precyclic-contrfiniteness} thus guarantees that $\pinf (\lamod)$ is contravariantly finite in $\lamod$.  

As we already pointed out above, the conditions
(i) and (ii) of Setting 4.1 are satisfied for the pair $(\lamod, e)$, whence, by Theorem \ref{Birge5.3}, we only
need to show that $e \Lambda  e\text{-mod}$ allows for unlimited iteration of strong tilting.  Set $\la' = e \Lambda e$, and let 
$Q'$, resp. $J'$, be the quiver and Jacobson radical of $\la'$, respectively.  We already know that the category 
$\pinf(\laprimemod)$ is contravariantly finite in $\laprimemod$, due to the vanishing of the left finitistic 
dimension of $\la'$.  The latter in fact entails that the basic strong left $\la'$-module $T'$ is a copy of the left 
regular module $_{\la'} \la'$.  Consequently, the strongly tilted algebra $\widetilde{\la'} = \widetilde{e \Lambda e}$, i.e., 
the opposite of $\End_{\la'}(T')$, coincides with $\la'$.  By Theorem \ref{thm.reduction-to-simples-of-infteprojdim}, it therefore suffices to check that $
\pinf(\modla')$ is contravariantly finite in $\modla'$.  To confirm this, we observe that the pair $
(\modla', e')$ in turn satisfies the hypotheses 4.1:  Indeed, the precyclic vertices of $(Q')^{\op}$ are 
precisely $e_1, \dots, e_r$, whence the simple right $\la'$-modules of infinite projective dimension are among 
the quotients $e_i \la' /e_i J'$ for $i \le r$, and $(e - e') \la' e' = 0$.  The right finitistic dimension of $e' \la' e' = e' 
\Lambda e'$ is in turn zero, because all indecomposable projective right $e' \Lambda e'$-modules have the same Loewy 
length; indeed, this follows by duality from the second of our two hypotheses.  Consequently, another 
application of Proposition \ref{prop.precyclic-contrfiniteness} yields contravariant finiteness of $\pinf(\modla')$ as required.

The concluding statements are part of Theorem \ref{teor.strong-tilting-iteration}. 
\end{proof}

We deduce Theorem D of \cite{Birgedualities} as a special case.  
\begin{cor} \label{cor.Birge-Theorem D}
Suppose that $\la = KQ/I$ is a truncated path algebra.  
Then $\lamod$ allows for unlimited iteration of strong tilting. 
\end{cor}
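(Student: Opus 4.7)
The plan is to deduce Corollary \ref{cor.Birge-Theorem D} directly from Proposition \ref{prop.precyclic-stiltingiteration} by verifying its two Loewy-length hypotheses for a truncated path algebra $\la = KQ/I$, where $I$ is generated by the paths of length $L+1$. Let $e$ be the sum of the primitive idempotents of $\la$ corresponding to precyclic vertices of $Q$, and let $e'$ be the analogous sum for the critical vertices.

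First I will show that every indecomposable projective left $e\la e$-module $e\la e_i$ has Loewy length exactly $L+1$, independently of the precyclic vertex $i$. Truncation immediately forces $(eJe)^{L+1} e_i = 0$, so it is enough to exhibit a nonzero element of $(eJe)^L e_i$. For this, I will produce a path of length $L$ in $Q$ that starts at $i$ and passes solely through precyclic vertices: since $i$ is precyclic, some path connects $i$ to a vertex on a cycle $C$; every intermediate vertex along this approach is itself precyclic (each still reaches $C$), and once $C$ is reached one continues around $C$, whose vertices are patently precyclic. Factoring such a path as a product of $L$ arrows, each lying in $eJe$, exhibits the required element of $(eJe)^L e_i$.

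Next I will treat the indecomposable injective left $e'\la e'$-modules. Using the standard duality $D = \Hom_k(-,k)$, the Loewy length of an indecomposable injective left $e'\la e'$-module coincides with that of the corresponding indecomposable projective left $(e'\la e')^{\op} = e' \la^{\op} e'$-module. Since $\la^{\op} = K(Q^{\op})/I^{\op}$ is again truncated of length $L+1$, the argument of the previous step reduces to producing, for each critical vertex $i$, a path of length $L$ in $Q^{\op}$ starting at $i$ through vertices critical in $Q$; equivalently, a length-$L$ path in $Q$ ending at $i$ whose vertices are all critical. Such a path will be obtained from a cycle $C$ in $Q$ that reaches $i$ (present by postcyclicity of $i$): each intermediate vertex is postcyclic (lying on a path originating on $C$) and precyclic (reaching $i$, which in turn reaches a cycle by precyclicity), hence critical; extending backward along $C$, whose vertices are automatically critical, then achieves total length $L$.

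Assembling these two verifications with Proposition \ref{prop.precyclic-stiltingiteration} completes the argument. The combinatorial heart of the proof is the explicit construction of length-$L$ paths lying entirely inside the precyclic (respectively critical) subquiver; this is where uniform truncation is crucial, since it both forces the Loewy lengths of the relevant projectives to be bounded by $L+1$ and, via the possibility of wrapping around a cycle, guarantees that this bound is attained. I expect no further obstacle, although some care is required in transferring the precyclic/critical bookkeeping between $Q$ and $Q^{\op}$.
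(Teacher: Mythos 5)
Your proposal is correct and follows the paper's own route: the corollary is obtained as a special case of Proposition \ref{prop.precyclic-stiltingiteration}, the only thing to check being the two Loewy-length hypotheses, which the paper asserts (in the paragraph preceding that proposition) and you verify in detail. Your combinatorial verification — every indecomposable projective left $e\la e$-module and every indecomposable injective left $e'\la e'$-module has Loewy length exactly $L+1$, via length-$L$ paths staying inside the precyclic (resp.\ critical) subquiver — is exactly the justification the paper has in mind, so this is essentially the same argument with the details filled in.
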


The proof of the following generalization of Proposition \ref{prop.precyclic-stiltingiteration} is immediate from that of the latter.

\begin{cor}  \label{cor.precyclic-stilting iteration}
Let $\la = KQ/I$, and let $e$ and $e'$ be as in the statement of \ref{prop.precyclic-stiltingiteration}. 
 If $\,\lfindim e\la e = 0$
 and $\pinf(\modeprimelaeprime)$ is contravariantly finite in $\modeprimelaeprime$, then $\lamod$ allows for unlimited iteration of strong tilting. 
\end{cor}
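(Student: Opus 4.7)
The plan is to imitate the proof of Proposition \ref{prop.precyclic-stiltingiteration} almost verbatim, weakening only its very last step: where that proof deduced vanishing of the right finitistic dimension of $e'\la' e'$ from the Loewy-length norming hypothesis on indecomposable injective $e'\la e'$-modules, I will now directly feed in the assumption that $\pinf(\modeprimelaeprime)$ is contravariantly finite in $\modeprimelaeprime$.

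First I would use the hypothesis $\lfindim e\la e = 0$ to secure contravariant finiteness of $\pinf(\elaemod)$ in $\elaemod$, and then invoke Proposition \ref{prop.precyclic-contrfiniteness} to obtain contravariant finiteness of $\pinf(\lamod)$ in $\lamod$. Next I would verify that the pair $(\lamod , e)$ satisfies the blanket hypotheses of Setting \ref{condition4.1}: since $e$ collects all precyclic vertices of $Q$, every simple $\la$-module outside $\add(\la e / Je)$ has finite projective dimension, yielding condition (i), while the vanishing $e_i \la e_j = 0$ whenever $e_i$ is precyclic and $e_j$ is not yields condition (ii), exactly as recorded in the proof of Proposition \ref{prop.precyclic-stiltingiteration}. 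Corollary \ref{cor.iteration-stongtilting} then reduces the question of unlimited iteration of strong tilting in $\lamod$ to the analogous question for $\elaemod$.

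Writing $\la' := e\la e$, the hypothesis $\lfindim \la' = 0$ forces the basic strong tilting object in $\laprimemod$ to be $_{\la'}\la'$ itself, and hence $\widetilde{\la'} \cong \la'$. Theorem \ref{teor.strong-tilting-iteration} combined with Corollary \ref{Birge5.4} therefore converts unlimited iteration in $\laprimemod$ into the statement that $\pinf(\modla')$ is contravariantly finite in $\modla'$. To secure the latter, I would apply the right-hand version of Proposition \ref{prop.precyclic-contrfiniteness} to the pair $(\modla' , e')$: the precyclic vertices of the opposite quiver $(Q')^{\op}$ are exactly $e_1,\dots, e_r$, so the right-sided analogues of the conditions of Setting \ref{condition4.1} are in force (the simple right $\la'$-modules of infinite projective dimension belong to $\add(e'\la'/ e' J')$, and $(e-e')\la' e' = 0$). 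The assumed contravariant finiteness of $\pinf(\modeprimelaeprime)$ in $\modeprimelaeprime$ now plays the role formerly occupied by the vanishing of the right finitistic dimension of $e'\la' e'$, yielding contravariant finiteness of $\pinf(\modla')$ in $\modla'$ and completing the chain of equivalences.

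The essentially unique sticking point is keeping the side switches straight: one crosses from left to right exactly once, at the moment when $\lfindim \la' = 0$ collapses the first strong tilt of $\laprimemod$ and converts the left-sided iteration question into a right-sided contravariant-finiteness question on $\modla'$. Beyond this bookkeeping, no genuinely new calculation is required, the argument being a repackaging of results already established earlier in the paper.
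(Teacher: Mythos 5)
Your proposal is correct and is essentially the paper's own argument: the paper proves this corollary by declaring it immediate from the proof of Proposition \ref{prop.precyclic-stiltingiteration}, replacing the step that deduced contravariant finiteness of $\pinf(\modeprimelaeprime)$ from $\rfindim e'\la e' = 0$ by the hypothesis itself, exactly as you do. The only cosmetic remark is that your appeal to Corollary \ref{Birge5.4} is superfluous, since with $\widetilde{\la'}\cong\la'$ Theorem \ref{teor.strong-tilting-iteration} already converts the iteration question into contravariant finiteness of $\pinf(\modla')$.
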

Recall that an Artin algebra $\la$ is said to be {\it left serial\/} if all indecomposable projective left $\la$-modules are uniserial.  The algebras which are left and right serial are also called {\it Nakayama algebras\/}.  In \cite{BH} it was shown that, for any split left serial algebra $\la$, the category $\pinf (\lamod)$ is contravariantly finite in $\lamod$.  The conclusion actually carries over to the category $\modla$ of right $\la$-modules.  Namely:

\begin{prop} \label{prop.serial algebras}
 Suppose that $\la$ is a path algebra modulo relations.  If $\la$ is left serial, then 
$\pinf (\lamod)$ is contravariantly finite in $\lamod$ and $\pinf(\modla)$ is contravariantly finite in $\modla$.
\end{prop}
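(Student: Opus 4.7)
The first assertion is the main result of \cite{BH}, so the plan is to focus on showing that $\pinf(\modla)$ is contravariantly finite in $\modla$. The strategy is to apply the right-hand version of Theorem \ref{thm.reduction-to-simples-of-infteprojdim} --- that is, the theorem applied to $\la^{\op}$, as is done implicitly in the proof of Proposition \ref{prop.precyclic-stiltingiteration} --- with the idempotent $e=\sum_{i\in C}e_i$, where $C$ denotes the set of vertices of $Q$ lying on some oriented cycle.

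First, one verifies the right-hand analogs of conditions (i) and (ii) of Setting \ref{set.initial setting}. Condition (ii) is trivial: by left seriality, the unique outgoing arrow at any cyclic vertex stays on its cycle, so no directed path in $Q$ starts at a cyclic vertex and ends at a non-cyclic one; hence $(1-e)\la e=0$. Condition (i) is the substantive step and asserts that $T_i=e_i\la/e_iJ$ has finite projective dimension over $\la$ for every $i\notin C$. The plan here is to observe that the ancestor subquiver $A_i\subseteq Q$ --- the vertices admitting a directed path to $i$ --- is finite, acyclic, and, again by left seriality, contains no cyclic vertex (the forward chain from a cyclic vertex never leaves its cycle and so cannot reach the non-cyclic vertex $i$). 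Writing $h_v$ for the length of the unique directed path from $v\in A_i$ to $i$, and $d_i=\max_{v\in A_i}h_v$, one then proves by induction on $k$ that every composition factor of the $k$-th right syzygy $\Omega^kT_i$ corresponds to a vertex of $A_i$ of height $\geq k$. Indeed, given the statement at step $k$, the minimal projective cover $P_k$ is a direct sum of right projectives $e_v\la$ with $h_v\geq k$; its radical $P_kJ=\bigoplus_ve_vJ$ has composition factors drawn from the strict ancestors $A_v\setminus\{v\}$, each of height $\geq h_v+1\geq k+1$; since $\Omega^{k+1}T_i\subseteq P_kJ$, its composition factors inherit that same bound. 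Taking $k=d_i+1$ leaves no admissible heights, forcing $\Omega^{d_i+1}T_i=0$ and so $\pdim_{\la^{\op}}T_i\leq d_i$.

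With both hypotheses established, the reduction identifies $e\la e$ as a path algebra modulo relations whose quiver is the disjoint union of the oriented cycles of $Q$; thus $e\la e$ is a Nakayama algebra, and in particular $(e\la e)^{\op}$ is again left serial. Applying \cite{BH} to $(e\la e)^{\op}$ yields that $\pinf(\modelae)$ is contravariantly finite in $\modelae$, and the right-hand version of Theorem \ref{thm.reduction-to-simples-of-infteprojdim} delivers the desired conclusion. The main obstacle is expected to be the inductive argument above: while the height-tracking is conceptually clean, some care is needed in checking that the multiplicity of each simple $T_w$ in the right projective $e_v\la$ is indeed $\dim_K e_v\la e_w\leq 1$ (one path from $w$ to $v$, by uniqueness of forward chains in a left serial quiver), and that strict ancestors of $v\in A_i$ genuinely have height exceeding $h_v$ --- an observation resting on left seriality forcing each forward chain to be unique and on non-cyclicity precluding any revisit.
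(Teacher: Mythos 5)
Your proof is correct, and it reaches the same conclusion via the same reduction step as the paper: choose $e$ to be the sum of the idempotents at the cyclic vertices, observe that the corner algebra $e\la e$ is a Nakayama algebra whose quiver is the disjoint union of the oriented cycles of $Q$, and then apply the right-hand version of Theorem~\ref{thm.reduction-to-simples-of-infteprojdim} to transfer contravariant finiteness of $\pinf(\modelae)$ to $\pinf(\modla)$. Where the two arguments part ways is in how the blanket hypotheses of Setting~\ref{set.initial setting} are secured. The paper reduces to a connected non-tree quiver, notes that left seriality then forces a \emph{single} oriented cycle with all off-cycle vertices pre- but not postcyclic, and concludes that the cyclic vertices are exactly the precyclic vertices of $Q^{\op}$; with that structural fact in hand, Proposition~\ref{prop.precyclic-contrfiniteness} (which already bundles the general observations that simples of infinite projective dimension live at precyclic vertices and that $e_i\la e_j=0$ when $e_i$ is precyclic and $e_j$ is not) finishes the job. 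You instead skip the precyclic machinery and verify conditions~(i) and~(ii) directly: condition~(ii) via the vanishing $(1-e)\la e=0$, which parallels the second of the paper's general observations in the present special case, and condition~(i) via the ancestor-subquiver syzygy induction, which effectively re-proves the first general observation for left-serial algebras and yields the explicit bound $\pdim_{\la^{\op}}T_i\le d_i$ as a bonus. All the details you flag as needing care --- uniqueness of forward chains from out-degree $\le 1$, acyclicity of $A_i$, the strict height increase along strict ancestors --- do hold, so the induction goes through. In short, your route is a bit more self-contained and quantitative; the paper's is leaner by leveraging Proposition~\ref{prop.precyclic-contrfiniteness} and a structural reduction to a single cycle.
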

\begin{proof}  The assertion for left modules was proved in \cite{BH}.  We only address right $\la$-modules.

By hypothesis, $\la \cong KQ/I$ is left serial.  In particular, this means that $Q$ is free of double arrows.  Without loss of generality, we assume that $Q$ is a connected quiver; we may further assume that it is not a tree, since otherwise $\la$ has finite global dimension, which renders the contravariant finiteness claim trivial.  By left seriality, $Q$ then contains a single oriented cycle such that all off-cycle vertices are pre- but not postcyclic.  Consequently, any vertex of $Q^{\op}$ either belongs to said cycle or else is  post- but not precyclic.  On letting $e$  be the sum of the precyclic vertices of $Q^{\op}$, i.e., the vertices located on the oriented cycle in the present situation, we thus obtain a Nakayama algebra $e \Lambda e$.  Hence the $\pinf$-categories in both $\elaemod$ and $\modelae$ are contravariantly finite in the corresponding ambient module categories by \cite[Theorem 5.2]{BH}.  (The latter may alternatively be deduced from the fact that Nakayama algebras have finite representation type.) On combining this with Proposition \ref{prop.precyclic-contrfiniteness}, we obtain the claim.  
\end{proof}

\subsection{Applications to Morita contexts}

 In this  subsection we shall see that Morita contexts provide a tool to construct  examples of $\mathcal{P}^{<\infty}$-contravariant finiteness and iteration of strong tilting.   Recall that  a \emph{Morita context} (over a ground commutative ring $K$) consists of a sextuple $(A,B,M,N,\varphi ,\psi)$, where $A$ and $B$ are $K$-algebras, $M$ and $N$ are an $A$-$B$-  and a  $B$-$A$-bimodule, which we always assume with the same action of $K$ on the left and on the right,  and $\varphi :M\otimes_BN\longrightarrow A$ and  $\psi :N\otimes_AM\longrightarrow B$ are morphisms of $A$-$A$- and $B$-$B$-bimodules, respectively, satisfying certain compatibiliity conditions (see \cite{Muller}) which are exactly the ones that make $\Lambda =\begin{pmatrix}A & M\\ N & B \end{pmatrix}$ into a $K$-algebra with the obvious multiplication. Recall that $\tau_A=\text{Im}(\varphi)$ and  $\tau_B=\text{Im}(\psi)$ are two-sided  ideals of $A$ and $B$, respectively, called the \emph{trace ideals} of the Morita context. 
 
The Morita contexts in which we are interested  have some additional properties. We assume that $K$ is  artinian and $A$, $B$,  $M$ and $N$ are finitely generated as $K$-modules. Such a Morita context will be called a \emph{basic Morita context of Artin algebras} if $\Lambda$ is basic or, equivalently, if $A$ and $B$ are basic, $\tau_A\subseteq J(A)$ and $\tau_B\subseteq J(B)$, where $J(-)$ denotes the Jacobson radical.

 The following is the main result of the subsection.

 \begin{teor} \label{thm.mainMoritacontexts}
  Let $(A,B,M,N,\varphi ,\psi)$ be a basic Morita context of Artin algebras, where $M$ is   projective as a left $A$-module. Suppose also that $\psi:N\otimes_AM\longrightarrow B$ is a monomorphism and the algebra $B/\tau_B$ has finite global dimension. 
  
  Set $\Lambda =\begin{pmatrix} A & M\\ N & B\end{pmatrix}$. Then $\mathcal{P}^{<\infty}(\lamod)$ is contravariantly finite in $\lamod$ (resp.,  $\lamod$ allows for unlimited iteration of strong tilting) if and only if this is true for $\pinf(\Amod)$ (resp., $\Amod$).
  
  In particular, $\lamod$ allows for unlimited iteration of strong tilting when $A$ is a Gorenstein algebra or a truncated path algebra or when $\lfindim A=0=\rfindim A$.
 \end{teor}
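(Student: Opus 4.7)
The plan is to reduce the theorem to its analogue for the corner algebra $A$ by applying Theorem \ref{thm.reduction-to-simples-of-infteprojdim} and Corollary \ref{cor.iteration-stongtilting} to the idempotent $e = \begin{pmatrix} 1_A & 0 \\ 0 & 0 \end{pmatrix} \in \la$, which satisfies $e\la e \cong A$ canonically. The crux is to verify that the pair $(\la, e)$ meets the two blanket hypotheses of Setting \ref{set.initial setting}.

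Condition (ii) holds for free: as a left $e\la e \cong A$-module, $e\la(1-e)$ is isomorphic to ${}_AM$, which is projective by hypothesis. For condition (i) I need to establish $\pdim_\la \bigl( \la(1-e)/J(1-e) \bigr) < \infty$. Its simple summands are precisely the simple left $\la$-modules annihilated by $e$; these factor through the canonical projection $\la \twoheadrightarrow \la/\la e\la \cong B/\tau_B$. Since $\gldim(B/\tau_B) < \infty$, each admits a finite-length projective resolution over $B/\tau_B$, so it suffices to check $\pdim_\la(B/\tau_B) \le 1$. Using the exact sequence $0 \to \la e \la \to \la \to B/\tau_B \to 0$, the task reduces to showing that the two-sided ideal $\la e\la$ is projective as a left $\la$-module. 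A matrix calculation gives $\la e \la = \bigl(\begin{smallmatrix} A & M \\ N & \tau_B \end{smallmatrix}\bigr)$, which splits as a left $\la$-module into $\la e \oplus \bigl(\begin{smallmatrix} M \\ \tau_B \end{smallmatrix}\bigr)$. Injectivity of $\psi$ identifies $\tau_B$ with $N \otimes_A M$, so the second summand becomes $\lambda(M)$, where $\lambda : A\text{-mod} \to \lamod$ is the additive functor $X \mapsto \bigl(\begin{smallmatrix} X \\ N \otimes_A X \end{smallmatrix}\bigr)$ endowed with its natural $\la$-structure. Since $\lambda({}_AA) \cong \la e$ is projective, $\lambda$ sends projective $A$-modules to projective $\la$-modules; projectivity of ${}_AM$ therefore yields projectivity of $\lambda(M)$, and (i) is established. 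Theorem \ref{thm.reduction-to-simples-of-infteprojdim} together with Corollary \ref{cor.iteration-stongtilting} then deliver both asserted equivalences.

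For the three sufficient conditions on $A$: if $A$ is a truncated path algebra, Corollary \ref{cor.Birge-Theorem D} directly supplies unlimited iteration of strong tilting for $A\text{-mod}$; if $\lfindim A = 0 = \rfindim A$, then $\pinf(A\text{-mod}) = \add({}_AA)$ and $\pinf(\text{mod-}A) = \add(A_A)$ are both trivially contravariantly finite, the basic strong tilting modules on the two sides coincide with the regular modules ${}_AA$ and $A_A$ respectively, and Theorem \ref{teor.strong-tilting-iteration} supplies the iteration; for $A$ Gorenstein, the classical fact that both $\pinf(A\text{-mod})$ and $\pinf(\text{mod-}A)$ are contravariantly finite in their ambient categories (cf.\ \cite{AR1}), combined with Theorem \ref{teor.strong-tilting-iteration}, again yields unlimited iteration.

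The main obstacle is the verification of condition (i), and specifically the projectivity of $\la e\la$ in $\lamod$. The conceptual content lies in the joint use of the two input hypotheses --- injectivity of $\psi$ (to identify $\tau_B$ with $N \otimes_A M$) and projectivity of ${}_AM$ (to promote $\lambda(M)$ to a projective $\la$-module) --- both of which are indispensable for realizing the off-diagonal summand $\bigl(\begin{smallmatrix} M \\ \tau_B \end{smallmatrix}\bigr)$ as the image under $\lambda$ of a projective $A$-module.
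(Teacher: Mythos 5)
Your proof of the main equivalence is essentially the paper's own: the same idempotent $e=\left(\begin{smallmatrix}1&0\\ 0&0\end{smallmatrix}\right)$ with $e\Lambda e\cong A$, condition (ii) of Setting \ref{set.initial setting} coming for free from projectivity of $_AM$, and condition (i) obtained by showing that $\Lambda e\Lambda(1-e)\cong\left(\begin{smallmatrix}M\\ \tau_B\end{smallmatrix}\right)$ is a projective left $\Lambda$-module --- your identification of this summand with $\Lambda e\otimes_AM$ via injectivity of $\psi$ is exactly the paper's argument that the counit $\epsilon_{\Lambda(1-e)}$ is a monomorphism with image in $\add(\Lambda e)$ --- followed by the same change-of-rings step using $\pdim_\Lambda(\Lambda/\Lambda e\Lambda)\leq 1$ and $\gldim(B/\tau_B)<\infty$, and finally Theorem \ref{thm.reduction-to-simples-of-infteprojdim} and Corollary \ref{cor.iteration-stongtilting}. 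The only soft spot is in the supplementary statement (which the paper itself asserts without detail): for Gorenstein $A$, Theorem \ref{teor.strong-tilting-iteration} requires contravariant finiteness of $\pinf(\mbox{\rm mod-}\widetilde{A})$ over the strong tilt $\widetilde{A}$, not of $\pinf(\mbox{\rm mod-}A)$; your invocation becomes complete once you observe that the basic strong tilting module of a Gorenstein algebra is the minimal injective cogenerator $D(A_A)$ (it lies in $\pinf(\Amod)$ and is Ext-injective there), whence $\widetilde{A}\cong A$ and the two conditions coincide --- in your other two cases you implicitly made the analogous identification $\widetilde{A}\cong A$, resp.\ used Corollary \ref{cor.Birge-Theorem D}, so those are fine.
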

 \begin{proof}
 Take $e=\begin{pmatrix}1 & 0\\ 0 & 0 \end{pmatrix}$. The goal is to check that the pair $(\lamod ,e)$  satisfies the blanket hypotheses of Setting \ref{set.initial setting}. Bearing in mind that the algebra $e\Lambda e$ is isomorphic to $A$, the result will  then be a direct consequence of Theorem \ref{thm.reduction-to-simples-of-infteprojdim} and  Corollary \ref{cor.iteration-stongtilting}. Note that $B$ and $(1-e)\Lambda (1-e)$ are also isomorphic algebras and hence the $e\Lambda e$-$(1-e)\Lambda (1-e)$-bimodule $e\Lambda (1-e)$ is isomorphic to the $A$-$B$-bimodule $M$, with the obvious meaning. Hence $\pdim_{e\Lambda e}(e\Lambda (1-e))=0$ and we need to check that $\pdim_\Lambda (\frac{\Lambda (1-e)}{J(\Lambda))(1-e)})<\infty$. 
 
 We first prove that $\Lambda e\Lambda (1-e)=\begin{pmatrix}0 & M\\ 0 & \tau_B \end{pmatrix}\cong\begin{pmatrix}M\\ \tau_B \end{pmatrix}$ is a projective left $\Lambda$-module. Indeed the adjunction map $\epsilon_{\Lambda (1-e)}:\Lambda e\otimes_{e\Lambda e} e\Lambda (1-e)\longrightarrow\Lambda (1-e)$ gets identified, in the obvious way, with the map $\begin{pmatrix} A\\ N \end{pmatrix}\otimes_AM\longrightarrow\begin{pmatrix}  M\\ B\end{pmatrix}$, that can be expressed matricially as $\begin{pmatrix} \mu & 0\\ 0 & \psi \end{pmatrix}:\begin{pmatrix}A\otimes_AM\\ N\otimes_AM \end{pmatrix}\longrightarrow\begin{pmatrix}M\\ B \end{pmatrix}$. Here $\mu:A\otimes_AM\longrightarrow M$ is the canonical isomorphism given by multiplication and $\psi:N\otimes_AM\longrightarrow B$ is the map in the Morita context. It follows that $\epsilon_{\Lambda (1-e)}$ is a monomorphism, and so 
 $$\Lambda e\otimes_{e\Lambda e}e\Lambda (1-e)\cong\Lambda e\Lambda (1-e)=\text{Im}(\epsilon_{\Lambda (1-e)}).$$
 But $\Lambda e\otimes_{e\Lambda e}e\Lambda (1-e)\in\text{add}(\Lambda e\otimes_{e\Lambda e}e\Lambda e)=\text{add}(\Lambda e)$ since $e\Lambda (1-e)\in e\Lambda e\text{-proj}=\text{add}(_{e\Lambda e}e\Lambda e)$. Therefore  $\Lambda e\Lambda (1-e)$ is a projective left $\Lambda$-module.

 Next we should notice that there is an algebra isomorphism $\frac{\Lambda}{\Lambda e\Lambda}\cong\frac{B}{\tau_B}$, so that  these isomorphic algebras have finite global dimension. Moreover, we have an isomorphism $\frac{\Lambda}{\Lambda e\Lambda}\cong\frac{\Lambda (1-e)}{\Lambda e\Lambda (1-e)}$ in $\lamod$, which, by the previous paragraph,  implies that $\frac{\Lambda}{\Lambda e\Lambda}$ has projective dimension $\leq 1$ as a left $\Lambda$-module.  Note also that $\frac{\Lambda (1-e)}{J(\Lambda)(1-e)}$ is canonically a left $\frac{\Lambda}{\Lambda e\Lambda}$-module.
 Fix a minimal projective resolution $0\rightarrow Q_t\rightarrow ...\rightarrow Q_1\rightarrow Q_0\rightarrow \frac{\Lambda (1-e)}{J(\Lambda) (1-e)}\rightarrow 0$ in $\frac{\Lambda}{\Lambda e\Lambda}\text{-mod}$. This is also an exact sequence in $\lamod$, and we have that $\pdim_\Lambda (Q_k)\leq 1$ since $Q_k\in\text{add}(\frac{\Lambda}{\Lambda e\Lambda})$, for all $k=0,1,...,t$. It then follows that $\pdim_\Lambda (\frac{\Lambda (1-e)}{J(\Lambda)(1-e)})<\infty$, as desired. 
 
 The final statement clearly follows from assertion 3 since in those examples $A\text{-mod}$ allows for unlimited iteration of strong tilting. 
 \end{proof}

The last theorem and its proof have the following consequence for  triangular matrix algebras.
 
  \begin{cor} \label{cor.triangular matrix}
 Let $K$ be a  commutative Artinian ring, let $A$ and $B$ be basic Artin $K$-algebras and let $M$ and $N$ be a finitely generated $A$-$B$-bimodule and $B$-$A$-bimodule, respectively. Suppose that $B$ has finite global dimension and  $\mathcal{P}^{\infty}(\Amod)$ is contravariantly finite in $\Amod$. The following assertions hold:
 
 \begin{enumerate}
 \item If $\Lambda =\begin{pmatrix} A & 0\\ N & B \end{pmatrix}$, then  $\mathcal{P}^{\infty}(\lamod)$ is contravariantly finite in $\lamod$. Moreover if $\Amod$ allows for unlimited iteration of strong tilting, so does $\lamod$.
 \item If $\pdim(_AM)<\infty$, the conclusions of assertion 1 remain true on replacement of $\Lambda$ by  $\Gamma =\begin{pmatrix} A & M\\ 0 & B \end{pmatrix}$.
 \end{enumerate}
 \end{cor}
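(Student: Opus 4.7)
My plan for assertion (1) is to invoke Theorem \ref{thm.mainMoritacontexts} directly. The algebra $\Lambda = \begin{pmatrix} A & 0 \\ N & B \end{pmatrix}$ is the matrix algebra of the (degenerate) Morita context $(A, B, 0, N, 0, 0)$, in which the upper-right bimodule $M$ is zero and both structure maps $\varphi, \psi$ are the zero map, so $\tau_A = \tau_B = 0$. The hypotheses of Theorem \ref{thm.mainMoritacontexts} will then hold trivially: $_A M = 0$ is projective, $\psi : N \otimes_A M = 0 \to B$ is injective, and $B/\tau_B = B$ has finite global dimension by assumption. Theorem \ref{thm.mainMoritacontexts} will thus deliver both conclusions of (1) at once.

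For assertion (2), the hypothesis that $_A M$ be projective is weakened to $\pdim_A M < \infty$, so Theorem \ref{thm.mainMoritacontexts} will not apply verbatim. I will instead replay its strategy: verify the blanket hypotheses of Setting \ref{set.initial setting} for the pair $(\Gamma, e)$ with $e = \begin{pmatrix} 1 & 0 \\ 0 & 0 \end{pmatrix}$, and then appeal to Theorem \ref{thm.reduction-to-simples-of-infteprojdim} together with Corollary \ref{cor.iteration-stongtilting}. A routine matrix computation will identify $e \Gamma e \cong A$, $(1-e)\Gamma(1-e) \cong B$, $e \Gamma (1-e) \cong {_A M}$, and $(1-e)\Gamma e = 0$, whence condition \ref{condition4.1}(ii) reduces to $\pdim_A M < \infty$, which is given.

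Condition \ref{condition4.1}(i) will be the substantive point, and the main obstacle is bound to lie here. The key observation should be that for any projective left $A$-module $P$, the $\Gamma$-module $\begin{pmatrix} P \\ 0 \end{pmatrix}$ is a direct summand of a finite coproduct of copies of $\Gamma e$, hence projective. Consequently, a finite projective resolution of $M$ over $A$ will lift, by adjoining zero in the second coordinate, to a finite $\Gamma$-projective resolution of $\begin{pmatrix} M \\ 0 \end{pmatrix}$, yielding $\pdim_\Gamma \begin{pmatrix} M \\ 0 \end{pmatrix} = \pdim_A M < \infty$. Combining the identities $\Gamma e \Gamma (1-e) = \begin{pmatrix} M \\ 0 \end{pmatrix}$ and $\Gamma / \Gamma e \Gamma \cong B$ (as algebras, hence also as left $\Gamma$-modules) with the short exact sequence
$$0 \longrightarrow \begin{pmatrix} M \\ 0 \end{pmatrix} \longrightarrow \Gamma(1-e) \longrightarrow B \longrightarrow 0$$
will give $\pdim_\Gamma B < \infty$. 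Since $\gldim B < \infty$, each simple $\Gamma$-module associated to $1-e$---namely the pullback of a simple $B$-module along $\Gamma \twoheadrightarrow B$---will admit a finite projective resolution over $B$ whose terms are direct summands of copies of $B$, each of finite projective dimension over $\Gamma$; a standard dimension-shift argument will then yield finite $\pdim_\Gamma$ for these simples. This verifies \ref{condition4.1}(i), and Theorem \ref{thm.reduction-to-simples-of-infteprojdim} together with Corollary \ref{cor.iteration-stongtilting} will close the argument.
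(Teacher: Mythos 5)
Your proposal is correct and follows essentially the same route as the paper: assertion (1) by feeding the degenerate Morita context $(A,B,0,N,0,0)$ into Theorem \ref{thm.mainMoritacontexts}, and assertion (2) by verifying Setting \ref{set.initial setting} for $(\Gamma,e)$ — condition (ii) being exactly $\pdim_A M<\infty$, and condition (i) obtained by showing $\pdim_\Gamma\Gamma e\Gamma(1-e)<\infty$ (your coordinate-wise lift of an $A$-projective resolution of $M$ is the same mechanism as the paper's isomorphism $\Gamma e\otimes_{e\Gamma e}e\Gamma(1-e)\cong\Gamma e\Gamma(1-e)$ plus exactness of $\Gamma e\otimes_{e\Gamma e}-$) and then transferring $\gldim B<\infty$ through $\Gamma/\Gamma e\Gamma\cong B$ by dimension shifting, before invoking Theorem \ref{thm.reduction-to-simples-of-infteprojdim} and Corollary \ref{cor.iteration-stongtilting}.
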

 \begin{proof}
 Assertion 1 is a direct consequence of Theorem \ref{thm.mainMoritacontexts}.  As for assertion 2, note that if  $e=\begin{pmatrix}1 & 0\\ 0 & 0 \end{pmatrix}$ then  $\pdim_{e\Gamma e}(e\Gamma (1-e))<\infty$ since we have $\pdim(_AM)<\infty$. We will prove that $\pdim_\Gamma (\Gamma e\Gamma (1-e))<\infty$ and, arguing as in the last paragraph of the proof of  Theorem \ref{thm.mainMoritacontexts}, we will conclude that the pair $(\Gamma\text{-mod}, e)$ satisfies the blanket hypotheses of Setting  \ref{set.initial setting}, and the result will follow from Theorem \ref{thm.reduction-to-simples-of-infteprojdim} and  Corollary \ref{cor.iteration-stongtilting}.
 
 
  Note that  $0=(1-e)\Gamma e$ and so $\Gamma e=e\Gamma e$ is projective as right $e\Gamma e$-module. Moreover  we have a commutative diagram of $K$-modules
$$\xymatrixrowsep{2.5pc}\xymatrixcolsep{4pc}\xymatrix{ e\Gamma e \otimes _ {e\Gamma e} e\Gamma (1-e) \ar[r]^-{\cong} \ar@{=}[d]    & e\Gamma (1-e)   \ar@{=}[d]   \\
\Gamma e \otimes _ {e\Gamma e} e\Gamma (1-e) \ar[r]  & \Gamma e  \Gamma (1-e) 
},$$
 where the upper horizontal arrow is the canonical isomorphism and the lower horizontal one is the multiplication map. It follows that this latter arrow is an isomorphism, which implies that  $\pdim(_\Gamma\Gamma e\Gamma (1-e))<\infty$ since $_{e\Gamma e}e\Gamma (1-e)$ has finite projective dimension and the functor $\Gamma e\otimes_{e\Gamma e}-:e\Gamma e\text{-mod}\longrightarrow\Gamma\text{-mod}$ is exact and takes projectives to projectives. 
 \end{proof}

 
 
 
 
 We end the paper by giving non-triangular examples to which Theorem \ref{thm.mainMoritacontexts} applies. We start with the following elementary observation. 
 
  \begin{rem} \label{rem.effective construction} 
  Let $A$ and $B$ be basic Artin $K$-algebras and $M$ and $N$  finitely generated $A$-$B$- and $B$-$A$-bimodules, respectively. If $N\otimes_BM=0$ then any morphism  $\varphi :M\otimes_BN\longrightarrow A$ of $A$-$A$-bimodules such that $\Im(\varphi)\subseteq J(A)$ gives rise to a basic Morita context of Artin algebras $(A,B,M,N,\varphi ,0)$ since the required  compatibility conditions hold (see, e.g., \cite[Exercise IV.35]{STEN}). 
  \end{rem}

   \begin{ejems} \label{ejems.effective construction} 
   Let $A$, $B$ be basic finite dimensional algebras over an algebraically closed field $K$ and let  us fix complete sets of primitive idempontents $\{e_1,....,e_m\}$ and  $\{e_{m+1},....,e_n\}$ in $A$ and $B$, respectively. Consider either one of the following two situations, where the unadorned $\otimes$ means $\otimes_K$:
   
   \begin{enumerate}
   \item[(a)] $\gldim(B)<\infty$, $M$ is any finitely generated $A$-$B$-bimodule that is projective as a left $A$-module, $N$ is any finitely generated $B$-$A$-bimodule such that $Ne_i=0$ whenever the simple left $A$-module $Ae_i/J(A)e_i$ embeds in $\top(_AM)$, and   $\varphi :M\otimes_BN\longrightarrow A$ is any morphism of $A$-$A$-bimodules such that $\text{Im}(\varphi)\subseteq J(A)$. Then   $(A,B,M,N,\varphi ,0)$  is a basic Morita context.

   \item[(b)] Suppose  that the quiver $Q_B$ of $B$ has no oriented cycles, let $\Soc(B)$ be the socle of $B$ as a $B$-$B$-bimodule and fix any index $k\in\{m+1,...,n\}$ such that $W:=\Soc(B)e_k\neq 0$. Note that $W$ is a subbimodule of $_BB_B$ isomorphic to $Y\otimes\frac{e_kB}{e_kJ(B)}$, for some semisimple left $B$-module $Y$ that, due to the absence of oriented cycles in $Q_B$, satisfies  $e_kY=0$. Take $M=Ae_i\otimes\frac{e_kB}{e_kJ(B)}$ and $N=Y\otimes\frac{e_iA}{e_iJ(A)}$, for some $i=1,...,m$. We then have  $M\otimes_BN=0$ and an isomorphism of $B$-$B$-bimodules 
   $$N\otimes_AM\cong  Y\otimes\frac{e_iAe_i}{e_iJ(A)e_i}\otimes \frac{e_kB}{e_kJ(B)}\cong Y\otimes\frac{e_kB}{e_kJ(B)}\cong W.$$ 
   Taking as $\psi :N\otimes_AM\longrightarrow B$ the composition of this latter isomorphism followed by the inclusion $W\hookrightarrow B$, we get a basic Morita context $(A,B,M,N,0,\psi )$ (cf. the left-right symmetric version of Remark \ref{rem.effective construction}).
   \end{enumerate}
   
   In the situations (a) and (b), the associated algebra $\Lambda =\begin{pmatrix} A & M\\ N & B\end{pmatrix}$ satisfies that $\mathcal{P}^{<\infty}(\lamod)$ is contravariantly finite in $\lamod$ $($resp.,  $\lamod$ allows for unlimited iteration of strong tilting$)$ if, and only if, so does the algebra $A$.
   \end{ejems}
   \begin{proof}
  In situation (a), we have an isomorphism of left $A$-modules $M\cong\oplus_{i\in I}Ae_i^{t_i}$, where $I$ is the set of $i\in\{1,...,m\}$ such that the simple left $A$-module $Ae_i/J(A)e_i$  embeds in $\text{top}(_AM)$ and $t_i>0$ for all $i\in I$. It follows that $N\otimes_AM=0$ and the existence of the mentioned Morita context follows by Remark \ref{rem.effective construction}.   Hence in both situations $M$ is projective as a left $A$-module and the map $\psi :N\otimes_AM\longrightarrow B$ is a monomorphism.  In (a) we have that $\tau_B=0$ and in (b)  the quiver of $B/\tau_B$ has no oriented cycles. Therefore  $\text{gl.dim}(B/\tau_B)<\infty$ in both cases and assertions 1 and 2 are direct consequences of Theorem \ref{thm.mainMoritacontexts}. 
   \end{proof}
   
The following example  is a combinatorial version of Example \ref{ejems.effective construction}(a).

\begin{ejem} \label{ex.merging-of-quivers}
Let $A:=KQ_e/\langle R_e \rangle$ and $B:=KQ_{1-e}/\langle R_{1-e} \rangle$ be finite dimensional algebras given as quotients of path algebras modulo relations, on which we fix bases  $B_e$ and $B_{1-e}$ consisting of paths in $Q_e$ and $Q_{1-e}$, respectively. Consider the algebra $\Lambda =KQ/\langle R \rangle$, also given by quiver and relations, where:
\begin{enumerate}
\item The quiver $Q$ is obtained   from the disjoint union quiver $Q_e\bigsqcup Q_{1-e}$ by adding two  finite sets of arrows (some possibly empty) $\{\alpha_1,...,\alpha_s\}$  and $\{\beta_1,...,\beta_t\}$, where the $\alpha_i$ go from vertices in $Q_{1-e}$ to vertices in $Q_{e}$ and the $\beta_j$ from vertices in $Q_e$ to vertices in $Q_{1-e}$, with the only restriction that $t(\alpha_k)\neq o(\beta_l)$ for all $k=1,...,s$ and $l=1,...,t$ (here $o(\gamma)$ and $t(\gamma )$ denote, respectively,  the origin and terminus of any arrow $\gamma$); 
\item The set of relations is $R=R_e\cup R_{1-e}\cup R_{1-e,e}\cup R_{e,e}$, where
\begin{center}
$R_{1-e,e}=\{\beta_lq\text{: }l\in\{1,...,t\}\text{ and }q\in\bigcup_{k=1}^se_{o(\beta_l)}B_ee_{t(\alpha_k)}\}$, and 

$R_{e,e}=\{\alpha_kp\beta_l\text{: }l\in\{1,...,t\}\text{, }k\in\{1,...,s\}\text{ and }p\in e_{o(\alpha_k)}B_{1-e}e_{t(\beta_l)}\}$.
\end{center}

\end{enumerate}

When $\gldim(B)<\infty$, the subcategory
 $\mathcal{P}^{<\infty}(\lamod)$ is contravariantly finite in $\lamod$
(resp. $\lamod$ allows for arbitrary iteration of strong tilting) if, and only if, the corresponding property is true for the algebra $A$.
\end{ejem}

\begin{proof}
Let $\{e_1,...,e_m, e_{m+1},...,e_n\}$ be the set of primitive idempotents of $\Lambda$ corresponding to the vertices of $Q$, where we assume that $i$ is a vertex of $Q_e$ if and only if $i\leq m$. We have the canonical Morita context associated to $e:=e_1+...+e_m$, so that $\Lambda \cong \begin{pmatrix} e\Lambda e & e\Lambda (1-e)\\ (1-e)\Lambda e & (1-e)\Lambda (1-e) \end{pmatrix}$. The relations $R_{1-e,e}$ and $R_{e,e}$  guarantee that the nonzero paths in $e\Lambda e$ and $(1-e)\Lambda (1-e)$ consist exclusively of arrows of $Q_e$ and $Q_{1-e}$, respectively. That is, we have $e\Lambda e\cong A$ and $(1-e)\Lambda (1-e)\cong B$.

On the other hand, one can check that the chosen set of relations implies that $e\Lambda (1-e)=\sum_{k=1}^s\sum_{p\in e_{o(\alpha_k)}B_{1-e}}e\Lambda\alpha_kp$, that this sum is direct and that the map $e\Lambda e_{t(\alpha_k)}\longrightarrow e\Lambda\alpha_kp$ ($x\rightarrow x\alpha_kp$) is an isomorphism, for all $k=1,...,s$. Therefore $e\Lambda (1-e)$ is a projective left $e\Lambda e$-module. Moreover the simple left $e\Lambda e$-module ${e\Lambda e_i}/ {eJe_i}$ embeds in $\top (_{e\Lambda e}e\Lambda (1-e))$ if and only if $i=t(\alpha_k)$ for some $k=1,...,s$. But the relations in $R_{1-e,e}$ imply that $(1-e)\Lambda e_{t(\alpha_k)}=0$, for all $k=1,...,s$. 

We are thus in the situation of Example \ref{ejems.effective construction}(a), and the conclusions follow from that example.
\end{proof}

\subsection{A specific path algebra modulo relations}  The final example is a non-monomial path algebra modulo relations whose category of left modules allows for unlimited iteration of strong tilting. Our reduction technique renders verification of this fact significantly less labor-intensive.

\begin{ejem}
Let $\Lambda  = KQ/I$ be a specimen of the following class of finite dimensional algebras over a field $K$, which depends on $4$ parameters $c_1, \dots, c_4 \in K^*$.  The quiver $Q$ is
$$\xymatrixcolsep{4pc}
\xymatrix{
1 \ar[r]^{\mu} &2 \ar@/^/[r]^{\alpha} \ar@/_/[r]_(0.7){\beta} &4 \ar@/^/[r]^(0.7){\gamma} \ar@/_/[r]_(0.3){\delta} &3 \ar@/^2.5pc/[ll]_{\rho} \ar@/^3.5pc/[ll]^{\sigma} \ar[r]_{\nu} &5 \ar@/_2pc/[ll]_{\tau}
}$$
\noindent and $I \subseteq KQ$ is the ideal generated by $\gamma \alpha - c_1 \delta \beta$, $\gamma \beta - c_2 \delta \alpha$, $\alpha \rho - c_3 \beta \sigma$, and $\alpha \rho - c_4 \tau \nu$,  next to monomial relations which are apparent from the graphs of the indecomposable projective left $\Lambda$-modules:
$$\xymatrixrowsep{1.5pc}\xymatrixcolsep{1pc}
\xymatrix{
&1 \edge[d]^{\mu} &&&&2 \edge[dl]_{\alpha} \edge[dr]^{\beta} &&&&3 \edge[dl]_{\rho} \edge[dr]_{\sigma} \edge[drr]^{\nu}  \\
&2 \edge[dl]_{\alpha} \edge[dr]^{\beta} &&&4 \edge[d]_{\gamma} \edge[drr]^(0.2){\delta} &&4 \edge[dll]_(0.2){\delta} \edge[d]^{\gamma} &&2 \edge[dr]_{\alpha} &&2 \edge[dl]_{\beta} &5 \edge[dll]^{\tau}  \\
4 \edge[dr]_{\gamma} &&4 \edge[dl]^{\delta} &&3 \edge[dr]_{\nu} &&3 \edge[dl]^{\nu} &&&4  \\
&3 &&&&5  \\
&&4 \edge[dl]_{\gamma} \edge[drr]^{\delta} &&&&&&5 \edge[d]^{\tau}  \\
&3 \edge[dl]_{\rho} \edge[d]^(0.7){\sigma} \edge[dr]^{\nu} &&&3 \edge[dl]_{\rho} \edge[d]^(0.7){\sigma} \edge[dr]^{\nu} &&&&4 \edge[d]^{\gamma}  \\
2 \edge[dr]_{\alpha} &2 \edge[d]^(0.3){\beta} &5 \edge[dl]^{\tau} &2 \edge[dr]_{\alpha} &2 \edge[d]^(0.3){\beta} &5 \edge[dl]^{\tau} &&&3 \edge[dl]_{\rho} \edge[d]^(0.7){\sigma} \edge[dr]^{\nu}  \\
&4 &&&4 &&&2 &2 &5
}$$
Then $\pinf(\lamod)$ is contravariantly finite in $\lamod$, and $\lamod$ allows for unlimited iteration of strong tilting.
\end{ejem}

\begin{proof}
First one checks that the simple left $\Lambda$-modules corresponding to the vertices $e_4$ and $e_5$ have finite projective dimension, namely $\pdim_{\Lambda} S_4 = 1$ and $\pdim_ {\Lambda}  S_5 = 3$.  Choose $e: = e_1 + e_2 + e_3$.   To check that $e \Lambda (1 - e)$ is projective in $e \Lambda  e\mbox{\rm-mod}$, observe that $\Lambda ' : = e \Lambda e$ is the algebra determined by the quiver $Q'$ and the indecomposable projective left $\Lambda '$-modules shown below. 
\smallskip
\smallskip

\centerline{$\xymatrixcolsep{5.5pc}
\xymatrix{
1 \ar[r]^{\mu} &2 \ar@/^1pc/[r]^{\gamma\alpha=c_1\delta\beta} \ar@/_0.7pc/[r]^{\gamma\beta=c_2\delta\alpha} &3 \ar@/^2pc/[l]_{\rho} \ar@/^3pc/[l]^{\sigma} 
}$ \qquad \qquad
$\xymatrixrowsep{1.5pc}\xymatrixcolsep{0.5pc}
\xymatrix{
1 \edge[d]_{\mu} &&&2 \edge[dl]_{\gamma\alpha} \edge[dr]^{\gamma\beta} &&&&3 \edge[dl]_{\rho} \edge[dr]^{\sigma}  \\
2 \edge[d]_{\gamma\alpha} &&3 &&3 &&2 &&2  \\
3
}$}

\noindent We now read off that $e \la e_4 \cong ( \la' e_3)^2$ and $e \la e_5 \cong \la' e_3$ in
$ \Lambda ' \mbox{\rm-mod}$, to find that the left $\la'$-module $e \la (1-e)$ is indeed projective.  To see that $\pinf( \Lambda ' \mbox{\rm-mod})$ is contravariantly finite in $ \Lambda ' \mbox{\rm-mod}$, we check that $\lfindim \la' = 0$ by applying Bass's criterion [4]:  Indeed, the graphs of the indecomposable projective right $\la'$-modules,
$$\xymatrixrowsep{1.5pc}\xymatrixcolsep{1pc}
\xymatrix{
1 \drbl &&&2 \edge[dl]_{\rho} \edge[d]^(0.7){\sigma} \edge[dr]^{\mu} &&&3 \edge[d]_{\gamma\alpha} \edge[dr]^{\gamma\beta}  \\
&&3 &3 &1 &&2 \edge[d]_{\mu} &2  \\
&& && &&1
}$$
\noindent  show that all simple right $\la'$-modules embed into the right socle of $\la'$.  Consequently, Theorem  \ref{thm.reduction-to-simples-of-infteprojdim}
 yields contravariant finiteness of 
$\pinf({\lamod})$ in
 $\lamod$.  
 
 To confirm that $\lamod$ even allows for unlimited iteration of strong tilting, we show that this is true for $\la'$-mod, whence Corollary \ref{cor.iteration-stongtilting} will yield our claim.  Since $\lfindim \la' = 0$, the basic strong tilting object in $ \Lambda ' \mbox{\rm-mod}$ is ${_{\la'} T'} =  
{_{\la'} \la'}$, which yields $\widetilde{\la'} \cong \la'$.  

It is not difficult to directly ascertain that
 $\pinf (\mbox{\rm mod-}\Lambda ') $
 is 
in turn contravariantly finite in $\mbox{\rm mod-}\Lambda '$, but another application of Theorem \ref{thm.reduction-to-simples-of-infteprojdim} cuts this task short:  Setting $e' = e'_2 + e'_3$, where the $e'_i$ are the primitive idempotents of $
\la'$ corresponding to the vertices of $Q'$, it is effortless to check that $e'$ satisfies the conditions of Setting 4.1 
relative to $\mbox{\rm mod-}\Lambda '$, and that 
$\rfindim e'\la'e' = 0$.  Hence $\pinf( \mbox{\rm mod-} e' \la' e')$ is contravariantly finite in
 $\mbox{\rm mod-} e' \la' e'$, and consequently 
so is $\pinf(\mbox{\rm mod-}\Lambda ')$ in $\mbox{\rm mod-}\Lambda '$.   
By Theorem \ref{teor.strong-tilting-iteration}, we thus conclude that $ \Lambda ' \mbox{\rm-mod}$  allows for 
unlimited iteration of strong tilting.  This completes the argument.  
\end{proof}

\end{document}